\newcommand{\rl}{{\mathbb{R}}}
\newcommand{\cx}{{\mathbb{C}}}
\newcommand{\codim}{\rm codim\,}
\newcommand{\e}{\varepsilon}
\newcommand{\supp}{{\mathrm{supp}}}
\newcommand{\tmop}[1]{\ensuremath{\operatorname{#1}}}
\renewcommand{\Re}{\tmop{Re}}
\renewcommand{\Im}{\tmop{Im}}
\newcommand{\B}{\mathbb{B}}
\newcommand{\D}{\mathbb D}
\renewcommand {\a}{\alpha}
\renewcommand {\b}{\beta}
\renewcommand {\supp}{{\rm supp\,}}
\newtheorem{theorem}{Theorem}[section]
\newtheorem{lemma}[theorem]{Lemma}
\newtheorem{prop}[theorem]{Proposition}
\newtheorem{cor}[theorem]{Corollary}
\theoremstyle{definition}
\newtheorem{defn}[theorem]{Definition}
\newtheorem{rem}[theorem]{Remark}
\newtheorem{ex}[theorem]{Example}
\begin{document}
\title{Polynomially convex hulls of  singular real manifolds}
\author{Rasul Shafikov* and Alexandre Sukhov**}
\begin{abstract}
We obtain local and global results on polynomially convex hulls of Lagrangian and totally real
submanifolds of $\cx^n$ with self-intersections and open Whitney umbrella points.
\end{abstract}

\maketitle

\let\thefootnote\relax\footnote{MSC: 32E20, 32E30, 32V40, 53D12.
Key words: symplectic structure, totally real manifold, Lagrangian manifold, Whitney umbrella, polynomial convexity,  
analytic disc, characteristic foliation.
}

* Department of Mathematics, the University of Western Ontario, London, Ontario, N6A 5B7, Canada,
e-mail: shafikov@uwo.ca. The author is partially supported by the Natural Sciences and Engineering 
Research Council of Canada.

**Universit\'e des Sciences et Technologies de Lille, 
U.F.R. de Math\'ematiques, 59655 Villeneuve d'Ascq, Cedex, France,
e-mail: sukhov@math.univ-lille1.fr. The author is partially supported by Labex CEMPI.

\tableofcontents


\section{Introduction}
This paper is concerned with polynomially convex hulls of $n$-dimensional real manifolds in $\cx^n$
with singularities of special type: self-intersections and open Whitney umbrellas. Our motivation comes
from the work of Givental~\cite{G} where he proved that any compact real surface can be realized as
a Lagrangian submanifold of $\cx^2$ with isolated singularities of this type. More precisely, following
Arnold \cite{Ar}, by a {\it Lagrangian inclusion} we mean a smooth mapping from a manifold into a 
symplectic manifold which is Lagrangian embedding in a neighbourhood of almost every point. Givental 
proved that every compact real surface admits a Lagrangian inclusion into $\cx^2$ with a finite number $d$ 
of transverse double self-intersection points, and $u$ open Whitney umbrellas, satisfying the following 
topological formula (mod 2 for a nonorientable $L$)
$$
L \cdot L = \chi(L) + 2 d + u,
$$
where $L \cdot L$ is the self-intersection index of the fundamental cycle of $L$ in $H_2(\cx^2)$ and 
$\chi(L)$ is the Euler characteristic. According to Givental~\cite{G} and Ishikawa \cite{Is}, for a generic 
Lagrangian inclusion double self-intersections and open Whitney umbrellas are the only singularities. 
Moreover, they are stable under Lagrangian deformations.

Since Gromov's~\cite{Gr} work it is understood that topological properties of 
Lagrangian inclusions into $\cx^n$ are related to the complex structure. For instance, (non)-existence of 
nonconstant Riemann surfaces
with the boundary glued to a Lagrangian inclusion has many topological consequences. This problem in its
turn can be regarded from the complex analysis point of view as a question on polynomial  convexity.
Polynomial convexity properties of real submanifolds in a complex manifold are of fundamental importance in complex
analysis and have been studied by many authors, we refer to a recent monograph of Stout~\cite{St} 
dedicated to this subject. A considerable progress in the case of Lagrangian and totally real submanifolds was made in the works of Alexander \cite{Al0}, Bedford-Klingenberg \cite{be-kl}, Duval-Sibony \cite{DuSi1,DuSi2}, Forstneri\v c-Stout \cite{FoSt},  Forstneri\v c -Rosay \cite{FoRo}, Gromov \cite{Gr},  Ivashkovich-Shevchishin \cite{IvSh}, J\"oricke \cite{Jo2}, Kenig-Webster \cite{KW} and other authors. However, little is known about polynomial convexity properties of singularities of Lagrangian
inclusions. This is the main subject of this paper, which is continuation of our previous work in~\cite{SS}.

Denote by $z = x + iy$ and $w = u + iv$ the standard complex coordinates in $\cx^2$. Consider the map

\begin{eqnarray}
\label{StandUmbr}
\pi: \rl^2_{(t,s)} \ni (t,s) \mapsto (ts,\frac{2t^3}{3},t^2,s) \in \rl^{4}_{(x,u,y,v)}
\end{eqnarray}
The image $\Sigma:= \pi(\rl^2)$ is called {\it the standard (unfolded or open) Whitney umbrella}. Denote by 
$\B_n(p,\e)$ the Euclidean ball in $\cx^n$ centred at $p$ and of radius $\e > 0$, and shortly write $\B_n$  
if $p= 0$ and $\e = 1$. We also often drop the index $n$ indicating the dimension when its value is clear from 
the context. Note that the map $\pi$ has the only critical point at the origin; furthermore, $\pi$ is a 
homeomorphism between  neighbourhoods of the origin in $\rl^2$ and in $\Sigma$.

\begin{defn} 
A closed subset $X$ of $\cx^n$ is called locally polynomially convex near a point $p \in X$ if for every sufficiently small $\e> 0$ the intersection $X \cap \overline{\B_n(p,\e)}$ is polynomially convex.
\end{defn} 

Denote by $\omega = dx \wedge dy + du \wedge dv$ the 
standard symplectic form on $\cx^2$. A smooth map $\phi: \cx^2 \to \cx^2$ is called {\it symplectic} if $\phi^*\omega = \omega$. Such a map   is necessarily a local diffeomorphism, so we call it  a (local) {\it  symplectomorphism}. 
Our first result is the following

\begin{theorem}\label{t:ga}
Suppose that $\phi: \cx^2 \to \cx^2$ is   a smooth  generic symplectomorphism near the origin. Then 
the surface $\phi(\Sigma)$ is locally polynomially convex near $\phi(0)$.
\end{theorem}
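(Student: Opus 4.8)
The plan is to reduce the statement to a purely local question at the single singular point $p_0:=\phi(0)$, and there to exclude any analytic structure by producing a plurisubharmonic barrier.

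First, one checks directly from \eqref{StandUmbr} that $\omega$ vanishes on $\Sigma$, so $\Sigma$ is a Lagrangian surface with an isolated umbrella point at $0$; since $\phi$ is symplectic, $\phi(\Sigma)$ is again Lagrangian. In $(\cx^2,\omega)$ every Lagrangian plane is totally real, because a complex line is never $\omega$-isotropic ($\omega(\xi,i\xi)=|\xi|^2$); hence $\phi(\Sigma)\setminus\{p_0\}$ is a smooth totally real submanifold. By the classical local polynomial convexity of totally real submanifolds (see \cite{St}) together with the local maximum modulus principle, any failure of polynomial convexity of $K_\e:=\phi(\Sigma)\cap\overline{\B(p_0,\e)}$ for small $\e>0$ must be detected at $p_0$: the set $\widehat{K_\e}\setminus K_\e$, if nonempty, clusters at $p_0$ and carries there a nonconstant analytic disc with boundary on $\phi(\Sigma)$ (equivalently, a nonzero positive current whose $dd^c$ is supported in $K_\e$). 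So it suffices to rule out such an object at $p_0$.

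Next I record that the first-order geometry at $p_0$ is already totally real: from \eqref{StandUmbr} the tangent cone of $\Sigma$ at $0$ is the half-plane $\{\Re z=\Re w=0,\ \Im z\ge 0\}$, lying in the Lagrangian plane $i\rl^2$, and $d\phi_0$ carries it into the Lagrangian — hence totally real — plane $d\phi_0(i\rl^2)$. Consequently the genericity hypothesis is needed only to control the higher-order part of $\phi\circ\pi$, ensuring it creates no complex-analytic direction or excess tangency; concretely I would first compose with a further symplectomorphism — which changes neither hypothesis nor conclusion — to bring the Lagrangian open umbrella $\phi\circ\pi$ to a normal form differing from $\pi$ only by terms of order controlled exactly by genericity.

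The core of the argument, and the step I expect to be the main obstacle, is the construction near $p_0$ of a plurisubharmonic function $\psi$ with $\psi(p_0)=0$ and $\psi<0$ on $\bigl(\phi(\Sigma)\setminus\{p_0\}\bigr)\cap\B(p_0,\e)$. Given such a $\psi$, the maximum principle applied along any analytic disc attached to $\phi(\Sigma)$ near $p_0$ — or integration of $\psi$ against a putative positive current — forces triviality, contradicting the total realness of the smooth part and finishing the proof. To build $\psi$ one pulls back candidate functions by $\phi\circ\pi$ and arranges the pullback to vanish on $\rl^2_{(t,s)}$ only at the origin; for the model $\pi$ this estimate comes from the explicit relations satisfied on $\Sigma$, namely $9u^2=4y^3$ and $x^2=yv^2$ with $y\ge 0$, which confine $\Sigma$ to the negative locus of a function encoding these relations while suitably dominating the ``transverse'' coordinates $\Re z$ and $\Re w$; genericity is then used to absorb the perturbation $\phi\circ\pi-\pi$. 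The delicate point is precisely that $\dist(\cdot,\phi(\Sigma))$ is not smooth at the umbrella point — $\Sigma$ is not a $C^1$ graph over its tangent plane (the transverse coordinates behave like $y^{3/2}$ and $\sqrt{y}\,v$) — so the standard totally-real recipe, a smooth strictly plurisubharmonic function vanishing on the manifold, is unavailable there, and the barrier $\psi$ must be tailored by hand to the cuspidal geometry of $\Sigma$.
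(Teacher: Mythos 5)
Your plan leaves unconstructed precisely the step that carries the entire content of the theorem, and the logic surrounding it does not close. First, even granted a plurisubharmonic $\psi$ with $\psi(p_0)=0$ and $\psi<0$ on $\phi(\Sigma)\setminus\{p_0\}$ near $p_0$, the maximum principle gives nothing: for a disc $f$ with $f(\partial\D)\subset\phi(\Sigma)$ you get $\psi\circ f\le 0$ on $\overline\D$, which is not a contradiction. To exclude discs or positive currents with $dd^c$ supported on $K_\e$ one needs a \emph{nonnegative} plurisubharmonic function vanishing exactly on $K_\e$ (with some strict plurisubharmonicity off it); but by Stout's characterization \cite[Thm 1.3.8]{St} the existence of such a function is \emph{equivalent} to polynomial convexity, so unless you actually produce it your argument is circular. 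Second, the reduction itself is unjustified: $\widehat{K_\e}\ne K_\e$ does not imply the hull carries an analytic disc (Stolzenberg, Wermer), and even the localization of $\widehat{K_\e}\setminus K_\e$ at $p_0$ needs an argument. The version via Duval--Sibony currents can be made precise, but then the sign problem above reappears when you try to integrate $\psi$ against the current. Third, "genericity is then used to absorb the perturbation" is not a mechanism; nothing in your sketch explains why the conclusion could fail, or the method break down, for a non-generic smooth $\phi$.

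The paper's proof goes a different way and shows exactly where genericity enters. One uses that $\Sigma$ lies in the strictly pseudoconvex hypersurface $M=\{\rho=0\}$ of \eqref{hypersurf} and studies the characteristic foliation induced on $\Sigma'=\psi\circ\phi(\Sigma)$ by $M'$; its pullback under $\pi$ is the planar vector field \eqref{e:system}, and local polynomial convexity follows from the structure of its phase portrait (Proposition~\ref{l:2}) by the sweeping argument of \cite{SS}, which does not require real analyticity. The only genuinely new difficulty in the smooth category is that the phase portrait of $\mathcal X_\phi$ must be readable off a finite jet. This is Dumortier's finite determinacy theorem, valid only when $j^\infty(\mathcal X_\phi)$ avoids the pro-algebraic set $A$ of Section~\ref{s.A}; showing that this condition is generic among \emph{symplectic} maps is the heart of the matter and requires Turaev's approximation of symplectic diffeomorphisms by polynomial H\'enon-like maps (Lemma~\ref{density}). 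Your proposal contains no substitute for this finite-determinacy step, which is why the approach, as written, cannot be completed along the lines you indicate.
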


When $\phi$ is real analytic, this result is obtained in \cite{SS}. The word ``smooth" everywhere  means of class $C^\infty$ and 
the word ``generic" here means that $j^\infty\phi$, the jet of $\phi$ at zero, does not belong to a closed nowhere dense subset 
of the space 
of jets of symplectic maps at the origin. Our proof pushes further the method developed in \cite{SS}: in order to pass from the real analytic category to the smooth one we use more advanced tools of the singularity theory. Theorem~\ref{t:ga} can be viewed as an analogue of the result 
of Forstneri\v c and Stout \cite{FoSt} on local polynomial convexity of a totally real surface near an isolated 
hyperbolic point.

Our next result establishes local polynomial convexity near the other type of singularity of a Lagrangian 
inclusion--transverse self-intersection.

\begin{theorem}\label{InterTheo}
Let $L_1$ and $L_2$ be smooth Lagrangian submanifolds in $\cx^n$ intersecting transversally at a point $p$. 
Then the union $(L_ 1 \cup L_2)$ is locally polynomially convex near $p$. 
\end{theorem}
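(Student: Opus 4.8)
My plan is to reduce to a normal form by a local change of coordinates and then apply a known polynomial convexity criterion for a union of totally real subspaces. First I would observe that the statement is local and biholomorphically invariant in the following weak sense: polynomial convexity near a point is preserved under biholomorphic maps fixing the point, so I may use a $\cx$-linear change of coordinates to simplify $L_1$ and $L_2$ at $p=0$. After a unitary transformation I can arrange $T_0 L_1 = \rl^n = \{y = 0\}$. The key structural input is that $L_1$ is Lagrangian, hence totally real of maximal dimension; the same for $L_2$. Since $L_1$ and $L_2$ meet transversally, $T_0 L_1 \cap T_0 L_2 = \{0\}$ and $T_0 L_1 \oplus T_0 L_2 = \cx^n$ (as real vector spaces), so $T_0 L_2$ is a second totally real $n$-plane complementary to $\rl^n$. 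A linear algebra lemma (the Lagrangian analogue of the fact that two transversal totally real planes can be put in a standard position) lets me further normalize $T_0 L_2$; in the Lagrangian setting one can use a symplectic-linear map to bring the pair $(T_0L_1, T_0L_2)$ to $(\rl^n, J\rl^n \cdot A)$ for a suitable real symmetric positive-type matrix, or more simply to a diagonal model $T_0L_2 = \{x_j = \lambda_j y_j\}$-type planes; the precise model is not essential, only that the two tangent planes are in ``general position'' as totally real subspaces.

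Next I would invoke the local polynomial convexity result for two totally real planes in general position together with their higher-order perturbations. The relevant fact is: if $M_1, M_2$ are totally real $C^1$ submanifolds through $0$ with $T_0M_1 \cap T_0M_2 = \{0\}$ and the pair of tangent planes is in general position (no nontrivial complex-linear relations), then $M_1 \cup M_2$ is locally polynomially convex near $0$. For linear subspaces this is classical (it reduces, after the normalization above, to checking that $\rl^n \cup J\rl^n$ or a diagonal model is polynomially convex, which follows from an explicit separating family of plurisubharmonic functions or from Weinstock-type arguments). To pass from the linear model to the actual manifolds I would use a standard device: the union $L_1 \cup L_2$ lies in the zero set of a nonnegative plurisubharmonic function that is strictly plurisubharmonic off $L_1 \cup L_2$ near $0$ — for instance a function of the form $\rho = \rho_1 \cdot \rho_2$ or $\rho = \min(\rho_1,\rho_2)$ regularized, where $\rho_i$ vanishes on $L_i$ and is strictly plurisubharmonic transversally. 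Constructing such a $\rho$ for a single totally real manifold is routine; the content is that in general position the two functions can be combined without destroying plurisubharmonicity near $0$, which is where transversality and the tangent-plane general position are used.

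The main obstacle, as I see it, is precisely this last gluing step: the function $\rho_i$ attached to $L_i$ is strictly plurisubharmonic only in directions transverse to $T_0L_i$, and $\min(\rho_1,\rho_2)$ need not be plurisubharmonic because the set where the two defining functions are comparable can be tangent to complex directions if the planes are badly positioned. For generic totally real planes this does not happen, but for Lagrangian planes one must check that the Lagrangian condition does not force a degenerate configuration. I would handle this by first treating the flat case $L_i = T_0 L_i$ using the explicit model and an exhaustion by sublevel sets of a quadratic weight, proving polynomial convexity of the linear union directly (e.g.\ by exhibiting for each point $q \notin L_1 \cup L_2$ a polynomial, or better a plurisubharmonic polynomial of degree $2$, peaking there); then I would perturb: writing $L_i$ as a graph over $T_0L_i$ with second-order error, the defining functions acquire $o(|z|^2)$ corrections, and on a sufficiently small ball the strict plurisubharmonicity of the combined weight in the flat model survives the perturbation by a standard compactness/openness argument. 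Finally, local polynomial convexity follows from the existence of a bounded strictly plurisubharmonic exhaustion of a neighborhood minus $L_1\cup L_2$ together with the fact that $L_1 \cup L_2$ is itself polynomially convex as a compact set in a small ball (it has no analytic structure since each $L_i$ is totally real and they meet in a single point), via the Rossi local maximum principle.
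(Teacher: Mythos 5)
There is a genuine gap, in fact two. First, you never actually establish the linear (flat) case. For two transversal maximally totally real $n$-planes $E_1=\rl^n$ and $E_2=(A+iI_n)\rl^n$, polynomial convexity of $E_1\cup E_2$ is \emph{not} automatic from ``general position'': by Weinstock's criterion it fails precisely when $A$ has a purely imaginary eigenvalue of modulus greater than one, and such pairs exist. So the whole content of the linear step is to show the Lagrangian hypothesis excludes this obstruction. The paper does so by invoking Weinstock's dichotomy (either the union is polynomially convex or it bounds an analytic annulus) and then killing the annulus by a Stokes/symplectic-area argument: a primitive $\lambda$ of $\omega$ restricts to a closed form on each Lagrangian plane, the boundary circles are null-homotopic there, so the annulus would have zero area, a contradiction. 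You acknowledge that ``one must check that the Lagrangian condition does not force a degenerate configuration'' but give no argument; this is the crux, not a detail.

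Second, your mechanism for passing from the tangent planes to the actual manifolds does not work as stated. Neither $\rho_1\cdot\rho_2$ nor $\min(\rho_1,\rho_2)$ is plurisubharmonic in general (the minimum of two psh functions fails to be psh along the locus where they agree, independently of the position of the planes), so the proposed ``combined weight'' need not exist; and the closing appeal to absence of analytic structure plus Rossi's local maximum principle is also insufficient, since a compact set can have a nontrivial hull containing no analytic discs. The paper's route is different and avoids both problems: it separates $L_1$ and $L_2$ by Kallin's lemma, constructing an explicit quadratic polynomial $p$ (assembled block by block from the Jordan normal form of $A$) with $\Im p|_{L_1}\le -C\|x\|^2$ and $\Im p|_{L_2}\ge C\|y\|^2$, so that $p(L_1)$ and $p(L_2)$ lie in opposite closed half-planes meeting only at the origin. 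To repair your proof you would need either to carry out this Kallin-type separation or to supply a genuinely plurisubharmonic nonnegative function vanishing exactly on $L_1\cup L_2$ near $p$, and neither is achieved by the constructions you propose.
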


The proof of this theorem also shows that for a sufficiently small neighbourhood $U$ of $p$, any continuous function on 
$U\cap (L_ 1 \cup L_2)$ can be approximated by holomorphic polynomials. Similar conclusion holds under the assumptions
of Theorem~\ref{t:ga} near an open umbrella point, since the argument in \cite[Cor. 1]{SS} is valid in the smooth 
case.

Theorems~\ref{t:ga} and~\ref{InterTheo} now state that a generic Lagrangian inclusion in $\cx^2$ is locally polynomially
convex. This leads to some global consequences. In this paper all compact manifolds are without boundary. Denote by $\D = \{ \zeta \in \cx: \vert \zeta \vert < 1 \}$ the unit disc in $\cx$. Recall that  a holomorphic map $f: \D \to \cx^n$ is called an analytic disc; if $f$ is continuous on the closed disc $\overline{\D}$, the restriction $f\vert_{\partial\D}$ of $f$ to the boundary $\partial\D$ is called the boundary of $f$. The boundary of an analytic disc $f$ is said to be attached to a subset $E$ of $\cx^n$ if $f(\partial\D) \subset E$.

\begin{theorem}\label{continuity10}
Suppose that a smooth compact Lagrangian immersion $L$ in $\cx^n$ admits a finite number of self-intersection points  and is locally polynomially convex near every self-intersection point. Then there exists a nonconstant analytic  disc continuous on $\overline{\D}$ with boundary attached to~$L$.
\end{theorem}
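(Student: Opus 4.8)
The plan is to produce the analytic disc by a compactness/limiting argument applied to a family of discs attached to a slightly "deformed" configuration, exploiting the fact that local polynomial convexity near the self-intersection points rules out bubbling there. Here is the structure I would follow.

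\medskip

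\textbf{Step 1: Reduce to showing the hull is nontrivial.} Let $\pi:\widetilde L\to L$ be the immersion, with $\widetilde L$ a compact manifold and $L=\pi(\widetilde L)\subset\cx^n$ its image, a finite union of embedded Lagrangian pieces glued along finitely many double points $p_1,\dots,p_d$. I would first argue that $L$ is \emph{not} polynomially convex, i.e.\ $\widehat L\setminus L\neq\varnothing$. Indeed, a smooth embedded compact Lagrangian submanifold of $\cx^n$ is never polynomially convex (its hull has positive measure, by the results of Duval--Sibony / Gromov cited in the introduction, since a Lagrangian carries no nonzero exact positive $(1,1)$-current obstruction); the presence of finitely many transverse double points does not change this, because near each $p_j$ the set $L$ is locally polynomially convex by hypothesis, so any point of $\widehat L\setminus L$ must lie away from the $p_j$'s and the local picture there is that of an honest embedded Lagrangian. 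A clean way to get the nontriviality is to use the fact (Alexander, Duval--Sibony) that a compact subset $X\subset\cx^n$ with $\widehat X=X$ satisfies $H^1(X;\rl)\to H^1(X;\rl)$ being an isomorphism onto a space with a certain dimension bound, or more directly to invoke that a Lagrangian $L$ bounds a nonzero class and cannot be polynomially convex; for the immersed case one passes to a small smoothing $L_\e$ of $L$ (embedded Lagrangian obtained by Lagrange surgery at the double points, or simply by perturbing the branches apart symplectically) which is still non--polynomially--convex, and then lets $\e\to 0$.

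\medskip

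\textbf{Step 2: From a nontrivial hull to an analytic disc.} This is the heart of the matter and where the cited convexity technology enters. For each smoothing $L_\e$ (an embedded compact Lagrangian, hence rationally convex but not polynomially convex), the theory of analytic discs with boundary on a totally real / Lagrangian manifold — via the solution of a Riemann--Hilbert type boundary value problem and a continuity/Gromov compactness argument (Alexander \cite{Al0}, Gromov \cite{Gr}, Ivashkovich--Shevchishin \cite{IvSh}) — produces a nonconstant analytic disc $f_\e:\overline{\D}\to\cx^n$, continuous up to the boundary, with $f_\e(\partial\D)\subset L_\e$. After normalizing (fixing three boundary points, or bounding the derivative by a uniform monotonicity/area estimate using that $L_\e$ lies in a fixed ball), the family $\{f_\e\}$ is equicontinuous and one extracts a limit $f:\overline{\D}\to\cx^n$ with $f(\partial\D)\subset L$.

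\medskip

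\textbf{Step 3: The limit disc is nonconstant — the main obstacle.} The real difficulty is to prevent the limiting disc from degenerating to a constant, and the constant it could degenerate to is precisely one of the self-intersection points $p_j$ (the boundaries $L_\e$ pinch together there as $\e\to 0$). This is exactly where the hypothesis of \emph{local polynomial convexity near each $p_j$} is used: if $f$ were constant $\equiv p_j$, then all the mass of the discs $f_\e$ concentrates near $p_j$, so for small $\e$ the discs $f_\e$ give points of the polynomial hull of $L_\e\cap\overline{\B_n(p_j,\delta)}$ outside that set; but $L_\e\cap\overline{\B_n(p_j,\delta)}$ is $C^0$-close to $L\cap\overline{\B_n(p_j,\delta)}$, which is polynomially convex for small $\delta$, and local polynomial convexity is stable under small perturbations — so for $\e,\delta$ small the local hull is trivial, contradicting the existence of such discs. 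Hence $f$ cannot be constant, and $f(\partial\D)\subset L$ gives the desired disc. I expect Step 3 to be the subtle point, requiring a careful quantitative version of "local polynomial convexity is an open condition" together with an a~priori area (monotonicity) lower bound on the discs $f_\e$ to guarantee that if they do not concentrate at a $p_j$ they subconverge to a genuinely nonconstant disc; Steps 1 and 2 are standard applications of the cited results.
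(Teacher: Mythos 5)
Your approach --- smooth $L$ to embedded Lagrangians $L_\e$, apply Gromov's theorem to each, and pass to the limit --- is genuinely different from the one in the paper, and it breaks down at several points. First, the smoothing itself is unavailable in the generality of the statement: the theorem does not assume the self-intersections are double or transverse (this generality is one of its advantages over Ivashkovich--Shevchishin), while Lagrangian surgery applies only to transverse double points; moreover one cannot ``perturb the branches apart symplectically'', since double points of Lagrangian immersions are stable under Lagrangian deformations. Second, even for transverse double points, the surgery replaces the two sheets near $p_j$ by a Lagrangian handle which is only Hausdorff-close to $L\cap\overline{\B(p_j,\delta)}$; this is not a $C^1$-small perturbation of a fixed totally real manifold, and polynomial convexity is emphatically not stable under Hausdorff-small perturbations, so the key claim in your Step 3 (``local polynomial convexity is stable under small perturbations'', applied to $L_\e$ near $p_j$) is unjustified --- the handle has nontrivial $H^1$ and could a priori bound analytic annuli even though the transverse union of Lagrangian planes does not. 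Third, the limiting argument lacks the uniform bounds it needs: Gromov's theorem gives no area bound for $f_\e$ uniform in $\e$, and the monotonicity-type lower bound on the area of a disc attached to $L_\e$ degenerates as the handle shrinks, so you can neither extract a convergent subsequence nor rule out collapse by the argument sketched.

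The paper avoids all of this by never smoothing $L$. It runs Alexander's Fredholm/renormalization argument directly on the immersion (Proposition~\ref{GluingDisc2}), using the Ivashkovich--Shevchishin device of lifting boundary values to the source manifold $\tilde E$ in order to define the space of discs; this produces a nonconstant bounded analytic disc of bounded area, smooth on $\overline\D\setminus\{1\}$, attached to $L$ there and with cluster set $C(f,\partial\D)$ contained in $L$. The hypothesis of local polynomial convexity near the self-intersection points is then used for a purpose entirely different from your Step 3: via the Forstneri\v c--Stout-type nonnegative plurisubharmonic defining function (Lemma~\ref{l.fs}) and the boundary regularity theorem of \cite{ChCoSu}, it forces the disc to extend continuously across the exceptional boundary point (Theorem~\ref{t:cont1} and Corollary~\ref{removal}). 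In short, in the paper the convexity hypothesis buys boundary continuity of an already existing nonconstant disc, whereas in your outline it is asked to prevent degeneration of a family through surgered approximations --- a task it cannot perform.
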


For Lagrangian embeddings this result is due to Gromov~\cite{Gr}. Note that in the above theorem we do not 
require self-intersections to be double or transverse. The class of compact $n$-manifolds  admitting Lagrangian immersions in $\cx^n$ is considerably wider that the class of manifolds embeddable as Lagrangian submanifolds. For example, the torus is the only compact orientable surface admitting a Lagrangian embedding in $\cx^2$. By comparison, every compact orientable surface admits a Lagrangian immersion in $\cx^2$.

In view of Theorem~\ref{InterTheo} we have the following

\begin{cor}
\label{continuity20}
Let  $L$ be a smooth compact Lagrangian immersion in $\cx^n$ with  a finite number of double transverse self-intersection points. 
Then there exists a nonconstant analytic  disc  continuous on $\overline{\D}$ with the boundary attached to $L$.
\end{cor}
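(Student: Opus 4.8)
The plan is to deduce Corollary~\ref{continuity20} directly from Theorem~\ref{InterTheo} and Theorem~\ref{continuity10}. Let $L$ be a compact Lagrangian immersion with finitely many double transverse self-intersection points $p_1,\dots,p_k$. By Theorem~\ref{InterTheo}, near each $p_j$ the immersed manifold is a union $L_1\cup L_2$ of two smooth Lagrangian pieces meeting transversally, and hence $L$ is locally polynomially convex near each $p_j$. This is precisely the hypothesis of Theorem~\ref{continuity10}, so that theorem immediately yields a nonconstant analytic disc $f$, continuous on $\overline{\D}$, with $f(\partial\D)\subset L$.

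The only point requiring a word of care is the passage from the intrinsic picture (a Lagrangian \emph{immersion} of an abstract surface, immersed generically so that the only multiple points are transverse double points) to the extrinsic picture used in Theorem~\ref{InterTheo} (two embedded Lagrangian submanifolds of $\cx^n$ crossing transversally at a point). At a transverse double self-intersection point $p$ of the immersion, the preimage consists of exactly two points $q_1,q_2$ near each of which the immersion is an embedding; shrinking to a small neighbourhood of $p$, the images of these two embedded neighbourhoods are smooth Lagrangian submanifolds $L_1,L_2$ of $\cx^n$. Transversality of the double point means exactly that $T_pL_1+T_pL_2=\cx^n$ (as real vector spaces), i.e. $L_1$ and $L_2$ intersect transversally at $p$ in the sense of Theorem~\ref{InterTheo}. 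Hence the local polynomial convexity of $L$ near $p$ follows.

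I would therefore write the proof in essentially two sentences: first invoke Theorem~\ref{InterTheo} at each of the finitely many self-intersection points to get local polynomial convexity of $L$ there, and then invoke Theorem~\ref{continuity10} to produce the disc. There is no genuine obstacle here; the main (entirely routine) point is simply checking that a transverse double point of a Lagrangian immersion is literally an instance of the hypothesis of Theorem~\ref{InterTheo}, which is immediate from the definitions. All the analytic content has already been absorbed into the two quoted theorems.
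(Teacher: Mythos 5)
Your proof is correct and is exactly the paper's argument: the corollary is stated immediately after Theorem~\ref{continuity10} with the remark ``In view of Theorem~\ref{InterTheo} we have the following,'' i.e.\ Theorem~\ref{InterTheo} supplies local polynomial convexity at each transverse double point and Theorem~\ref{continuity10} then produces the disc. Your extra paragraph identifying a transverse double point of the immersion with two embedded Lagrangian pieces meeting transversally is a reasonable (if routine) point to make explicit.
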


This corollary is not new. Ivashkovich and Shevchishin \cite{IvSh} proved the existence of an analytic disc $f$ attached to 
an immersed Lagrangian manifold under an assumption of weak transversality which holds for transverse double intersections. Their approach is closer to the original work of Gromov and is based on the general compactness theorem for $J$-complex curves with boundaries glued to a Lagrangian immersion with weakly transverse self-intersections. Their method also works 
for  symplectic manifolds with  certain tamed almost complex structures. 
Our proof, based on Alexander's version \cite{Al} of Gromov's theory, uses purely complex-analytic tools. Note that 
Theorem~\ref{continuity10} also works in some cases when the result of Ivashkovich-Shevchishin cannot be applied. 
One occurrence of this is described in Example \ref{Ex1} of Section~\ref{s.X}. The condition of weak transversality from 
\cite{IvSh} fails there, but Theorem~\ref{continuity10} gives the result. We remark that in~\cite{Gr2} 
Gromov asserts that his method \cite{Gr} can be adapted for the case of any immersed Lagrangian submanifold
of~$\cx^n$. Neither our approach nor \cite{IvSh} gives the result in such generality. Nevertheless, Alexander's method implies  (see Proposition~\ref{GluingDisc2}) without any transversality assumption that the polynomially convex hull of a smooth compact Lagrangian immersion $L$  in $\cx^n$ contains a complex analytic curve of finite area with boundary glued to $L$. One technical tool
used in the proof of Theorem~\ref{continuity10} is Proposition~\ref{t:cont1} which established continuity
of holomorphic discs up to the boundary when the cluster set at the boundary $\partial \mathbb D$ is contained in a singular totally
real set, see Section~\ref{s.5} for details.

Given a compact $K\subset \cx^n$ we denote by $\widehat K$ the polynomially convex hull of $K$. The polynomially
convex hull of a general Lagrangian inclusion is described in the next theorem. This is a consequence of Duval-Sibony's
theory of hulls \cite{DuSi1, DuSi2}, combined with Alexander's technique \cite{Al} adapted to the case of totally real 
immersions.

\begin{theorem}\label{t.hulls}
(i) Let $L$ be a smooth compact Lagrangian surface in $\cx^2$ with a finite set  of  open Whitney umbrellas. 
Then $L\ne \widehat L$, and there exists a  positive (1,1) current $S$ such that $\supp(dd^cS) \subset L$ 
and  $\supp(S)$ is not contained in $L$, but is contained in $\widehat L$.

(ii) Let $E$ be a smooth compact totally real immersed manifold in $\cx^n$. Then $E\ne \widehat E$ and there exists a positive current $S$ of bidimension $(1,1)$ 
and mass 1, with $dS$ supported on $E$, such that $\supp(S)$ is contained in $\widehat E$, but not in $E$.
\end{theorem}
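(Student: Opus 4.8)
The plan is to treat part (ii) first, as the general totally real immersed case, and then obtain (i) as a consequence (Lagrangian surfaces with open Whitney umbrellas are totally real away from the umbrella points, and near an umbrella the set is still totally real except at the umbrella point itself, whose contribution can be absorbed).

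For part (ii), I would argue by contradiction: suppose no such current $S$ exists, i.e. every positive $(1,1)$ current of mass one whose boundary is carried by $E$ is actually supported in $E$. The key tool is the structure theory of Duval--Sibony \cite{DuSi1, DuSi2}: a compact set $K\subset\cx^n$ is rationally (resp. polynomially) convex if and only if it carries no nonzero positive current $T$ with $dd^cT\ge 0$ (resp. $dd^cT=0$ in a suitable sense) supported on $K$ — more precisely, their characterization relates $\widehat K$ to the existence of positive $(1,1)$ currents with controlled $dd^c$. Concretely, since $E$ is a smooth compact immersed totally real manifold, near each point it looks like $\rl^n\subset\cx^n$ up to a diffeomorphism, so one can construct locally a strictly plurisubharmonic function vanishing on $E$; this is the standard obstruction showing $E$ itself is locally polynomially convex and that any current supported on $E$ must be annihilated against $dd^c$ of such functions, forcing $dS$ (not $dd^cS$) to be the only freedom. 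The first step is therefore to produce a nonzero positive $(1,1)$ current of finite mass with $dS$ supported on $E$: for this I would follow Alexander's technique \cite{Al}, which for a totally real immersion builds such a current as a weak limit of (normalized) currents of integration over analytic discs with boundaries on $E$, or alternatively invokes Gromov--Alexander existence of such discs (the relevant existence is exactly the content needed from Theorem~\ref{continuity10}/\cite{Al} adapted to immersions). Normalizing the mass to one gives a candidate $S$.

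The second step is to show $\supp(S)\not\subset E$. This is where the argument has teeth. If $\supp(S)\subset E$, then since $E$ is totally real, locally $S$ is a positive $(1,1)$ current supported on a totally real (hence, in suitable coordinates, on a piece of $\rl^n$); but a positive $(p,p)$ current supported on a set that contains no complex analytic subvariety of dimension $\ge p$ must vanish — more precisely, by the support theorem / the fact that $E$ carries no positive $(1,1)$ current without boundary (since $E$ is totally real and has no complex tangents, $dd^c$ of a local defining psh function is strictly positive, which contradicts positivity of a nonzero $S$ with $dS$ small on that piece). The clean way: a nonzero positive $(1,1)$ current with $\supp(dS)\subset E$ and $\supp S\subset E$ would, by slicing against the strictly plurisubharmonic local functions $\rho_j$ with $\{\rho_j=0\}\supset E$ locally, give $\int \rho_j\, dd^c S \le 0$ while also $=\int \rho_j\,dd^cS = \int dd^c\rho_j\wedge S >0$ unless $S=0$ — a contradiction. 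Hence $\supp(S)\not\subset E$, and since $S$ is a positive current of bidimension $(1,1)$ with $dS$ supported on $E$, the Duval--Sibony characterization forces $\supp(S)\subset\widehat E$ (the hull must contain the support of any such current). This simultaneously yields $E\ne\widehat E$ when $E$ is, say, the Lagrangian surface of part (i), and gives the current $S$ claimed there with $\supp(dd^cS)\subset L$: near umbrella points one uses the local polynomial convexity from Theorem~\ref{t:ga} together with Cor.~1 of \cite{SS} to see that the umbrella contributes nothing that would push $dd^cS$ off $L$, and the disc-filling mechanism still produces the current.

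The main obstacle I anticipate is the handling of the Whitney umbrella points in part (i): at the umbrella point $E=L$ is not a manifold and not totally real in the usual sense (the map $\pi$ has a critical point at $0$), so the local strict-plurisubharmonicity argument used to kill $S$ on totally real pieces does not directly apply there. The resolution should be that the umbrella point set is of the wrong dimension/finite, so it cannot support a nonzero positive $(1,1)$ current anyway (a finite set supports no such current), hence $\supp(dd^cS)$ — which a priori could meet the umbrella locus — in fact cannot, and the structure $\supp(dd^cS)\subset L$ with the rest of $S$ in $\widehat L\setminus L$ goes through. A secondary technical point is making precise the passage "Alexander's technique adapted to totally real immersions": one must ensure that the limiting current one extracts is genuinely nonzero, which requires a mass lower bound on the approximating discs — this is standard in the Gromov--Alexander framework (isoperimetric/monotonicity estimates) but should be stated carefully for the immersed case, citing \cite{Al} and \cite{IvSh} as appropriate.
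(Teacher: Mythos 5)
Your treatment of part (ii) is essentially the paper's: one produces a nonconstant A-disc attached to the immersion by Alexander's Fredholm/renormalization argument (Proposition~\ref{GluingDisc2}), and then obtains $S$ as a limit of the normalized integration currents $f_*[U_k]/a_k$ over an exhaustion of $\D$, following Duval--Sibony; the properties $\supp(S)\subset\widehat E$ and $\supp(S)\not\subset E$ are read off from that limit construction. (Your alternative route to $\supp(S)\not\subset E$ --- pairing $S$ with $dd^c$ of a local strictly plurisubharmonic function vanishing on $E$ --- needs care: the statement controls only $dS$, not $dd^cS$, and the integration by parts producing $\int dd^c\rho\wedge S$ creates boundary terms involving $d^cS$ along $E$ that you have not controlled. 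This step is unnecessary if you follow the Duval--Sibony argument.)

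The genuine gap is in part (i). You propose to deduce (i) from the disc-filling mechanism of (ii) by ``absorbing'' the umbrella points, but that mechanism requires $E$ to be a totally real \emph{immersion}: the Fredholm theory and the elliptic estimates (Lemmas~\ref{AlEst2} and~\ref{AlEst1}) use totally real boundary conditions along all of $\partial\D$, and a Lagrangian surface with open Whitney umbrellas is not totally real --- indeed not even an immersed manifold --- at the umbrella points. Consequently neither the existence of a nonzero current attached to $L$ nor the inequality $L\ne\widehat L$ follows from your argument; observing that a finite set carries no positive $(1,1)$ current does not repair this, because the problem is with the \emph{construction}, not with the support of the limit. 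The paper proves (i) by a different route: a neighbourhood of an umbrella point in $L$ is homeomorphic to a disc, so $L$ is a compact \emph{topological} $2$-manifold in $\cx^2$, and by the theorem of Browder and Duchamp--Stout no compact $n$-dimensional topological manifold in $\cx^n$ is polynomially convex, whence $L\ne\widehat L$. One then applies Duval--Sibony's Theorem 5.3 of \cite{DuSi1}: for a compact $K$ with $\widehat K\ne K$ and a closed polynomially convex $Y\subset K$ such that $K\setminus Y$ is locally contained in totally real manifolds, there is a positive $(1,1)$ current $S$ with $\supp(dd^cS)\subset K$ and $\supp(S)\not\subset K$; here $Y$ is the finite set of umbrella points. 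Note finally that (i) asserts $\supp(dd^cS)\subset L$ while (ii) asserts that $dS$ is supported on $E$; these are different conclusions coming from two different Duval--Sibony theorems, and your proposal conflates them.
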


\noindent{\bf Acknowledgment.} The authors are deeply grateful to Sergey Ivashkovich for very useful
discussions concerning Alexander's work. He should be considered as a contributor to Section \ref{s.Adisc}.

\section{Background}

\subsection{Rings of smooth functions and spaces of jets}  
We recall some basic notions of the local  theory of singularities of differentiable maps following 
\cite{AGV,BL,gg}; these classical works contain proofs of all statements of the present subsection.   
Denote by
${\mathcal C}(n,m)$ the ring of $C^\infty$ differentiable germs of functions $(\rl^n,0) \to \rl^m$; we write 
${\mathcal C}(n)$ when $m=1$. Consider
$$
{\mathcal M}(n) = \{ f \in {\mathcal C}(n) \ \vert\ f(0) = 0 \}.
$$
Then ${\mathcal M}(n)$ is the unique maximal ideal in ${\mathcal C}(n)$. Let $x = (x_1,...,x_n)$ be the standard 
coordinates in $\rl^n$. The ideal ${\mathcal M}(n)$ coincides with the ideal $\langle x_1,...,x_n\rangle$ generated by
the germs of the coordinate functions $x_j$, $j = 1,...,n$. For a positive integer $s$ the power ${\mathcal M}(n)^s$ 
consists of germs $f$ from ${\mathcal C}(n)$ such that $D^\alpha f(0) = 0$ for every $\vert \alpha \vert < s$; this ideal 
is generated by the monomials $x_1^{\beta_1}...x_n^{\beta_n}$ with $\vert \beta \vert = s$. If $f \in {\mathcal M}(n)^k$, 
we say that the germ $f$ vanishes with order at least $k$. The space $\mathcal J^k(n)$ can be identified with 
the quotient
$$
{\mathcal C}(n)/{\mathcal M}^{k+1} = \rl[x_1,...,x_n]/\langle x_1,...,x_n\rangle^{k+1},
$$
where $\rl[x_1,...,x_n]$ denotes the ring of polynomials in the variables $x_1,...,x_n$.
Set ${\mathcal M}(n)^\infty = \cap_{s=1}^\infty {\mathcal M}(n)^s$. By Borel's theorem, the quotient 
${\mathcal C}(n)/{\mathcal M}(n)^\infty$ is isomorphic to the ring $\rl[[x_1,...,x_n]]$ of formal power series, i.e., for 
every formal power series there exists a smooth function such that its Taylor series at the origin coincides with this 
power series. In what follows we use the notation $\mathcal J^\infty (n)$ for ${\mathcal C}(n)/{\mathcal M}(n)^\infty$. 
Set also 
$$
\mathcal J^k(n,m) = \underbrace{\mathcal J^k (n) \times ...\times \mathcal J^k (n)}_{m {\rm \ times}}, 
\ \ k=1,2,\dots,\infty .
$$
This is the space of jets of maps from $\rl^n$ to $\rl^m$. Identifying vector fields in $\rl^n$ with maps $\rl^n \to \rl^n$ 
we may also define the jet spaces for vector fields. 
 
\bigskip

Consider $j^k:{\mathcal C}(n,m) \to \mathcal J^k(n,m)$  the map which associates to every germ its $k$-jet at 
the origin. The natural projections
\begin{eqnarray*}
\pi^l_k: {\mathcal J}^l(n,m) \to {\mathcal J}^k(n,m)
\end{eqnarray*}
are defined for $l \geq k$ by
$$j^l (f) \mapsto j^k (f)$$
and satisfy $\pi^l_k \circ \pi^m_l = \pi^m_k$ for $m \geq l \geq k$ and $\pi_{ll} = id$.
Similarly, we also have the canonical projection $\pi^\infty_k : {\mathcal J}^\infty(n,m) \to {\mathcal J}^k(n,m)$. 
A subset $V \subset {\mathcal J}^\infty (n,m)$ is called {\it pro-algebraic} if there exist algebraic subsets 
$V_k \subset {\mathcal J}^k(n,m)$ such that 
$$
V = \cap_{k=1}^\infty (\pi_k^\infty)^{-1}(V_k).
$$ 
The $\sup_k \codim V_{\it k}$ is called the codimension of $V$.

For $k<\infty$ the space $\mathcal J^k (n,m)$ can be identified with the Euclidean space $\rl^d$ of easily 
computable dimension $d = d(k,n,m)$, and so $\mathcal J^k (n,m)$ inherits the structure of a topological
space from this identification. The standard Whitney topology on the space of jets $\mathcal J^\infty(n,m)$ is 
defined as follows. If $U\subset \rl^d\cong \mathcal J^k (n,m)$ is open, then the set 
$M(U):=(\pi^\infty_k)^{-1}(U)$ is defined to be open in $\mathcal J^\infty(n,m)$. The collection of sets
$M(U)$ for all integers $k$ and all open subsets of $\mathcal J^k (n,m)$ is then the basis of the Whitney topology
of $\mathcal J^\infty(n,m)$. In this topology, all projections $\pi^l_k$ and $\pi^\infty_k$ are continuous maps.
Further details can be found in~\cite{gg}.

\subsection{The set $\mathbf A$, the \L ojasiewicz inequality and multiplicity}\label{s.A}
In the proof of Theorem~\ref{t:ga} an important role is played by the pro-algebraic set $A \subset \mathcal J^\infty (2,2)$,
which is defined as follows. Given a vector field $X = X_1(t,s)\frac{\partial}{\partial t}+ X_2(t,s)\frac{\partial}{\partial s}$ in a neighbourhood of the origin in $\rl^2_{(t,s)}$, its jet $j^\infty (X)$ at the origin is not in $A$ if the ideal $\langle X_1,X_2 \rangle$ generated by $X_1$ and $X_2$ contains a power of the maximal ideal $\mathcal M(2)$ in $\mathcal C(2)$. For a germ of a vector field
$X=(X_1,X_2)$ at zero in $\rl^2$ define
\begin{equation}\label{e.tau}
\tau_k(X) =\dim (\mathcal C(2)/(\langle X_1,X_2\rangle+\mathcal M(2)^{k+1}) ,
\end{equation}
and let
\begin{equation}\label{e.Ak}
A_k = \{T_k \in \mathcal J^k(2,2) : \tau_k(T_k) > k-1\} .
\end{equation}
Here we identify $T_k$ with a smooth $X$ such that $j^k(X) = T_k$. For each $k$, $A_k$ is a closed algebraic
subset of $\mathcal J^k(2,2)$, and one can show that
\begin{equation}\label{e.proA}
A = \cap_{k=1}^\infty (\pi_k^\infty)^{-1}(A_k).
\end{equation}
Thus, $A$ is a pro-algebraic set in $\mathcal J^\infty(2,2)$. Further, $\codim A =\infty$. Note that for a given 
germ $X$, its jet $j^\infty(X)$ does not belong to $A$ if and only if $j^k(X) \notin A_k$ for some $k$. This in its turn is equivalent to the fact that the {\it algebraic multplicity} 
$\mu_0(X) := \dim \mathcal C(2)/\langle X_1,X_2\rangle$ is finite. For more 
details about the set $A$  and proofs see \cite{d}.

It is easy to see that if $X=(X_1(t,s), X_2(t,s))$ is a germ of a smooth vector field in $\rl^2$ and $j^\infty(X) \notin A$, then
there exist $k, c, \delta >0$ such that
$$
||X(x)|| \ge c (|t|^2+|s|^2)^k, \ \ |t|^2+|s|^2<\delta .
$$
This is the so-called \L ojasiewicz inequality. \L ojasiewicz proved (see, e.g. \cite{BM}) that the inequality holds
for all  real analytic germs with an isolated zero at the origin. However, if a germ of a real analytic vector field 
has a nonisolated singularity at the origin, it still can be of infinite multiplicity, i.e., its jet can be in $A$. 
Let $X$ be the germ of a real analytic vector field vanishing at the origin. Denote by $X_{\cx}$ the germ of a complex 
analytic vector field defined by the  power series with the same coefficients as $X$, but over a neighbourhood $U$ of 
the origin in $\cx^2$.  Then $X$ is of finite algebraic multiplicity, i.e., $X$ is not in $A$ if and only if  the zero is an 
isolated singularity for the vector field $X_{\cx}$ in $U$. This provides a convenient way to check if a specific real analytic 
germ $X$ is of finite multiplicity. We consider a particular  example of the vector field (\ref{e:hyp-s}) since it will 
be used in the proof of Theorem~\ref{t:ga}.

\begin{ex} \label{Ex0}
Let ${\mathcal X}$ be the vector field (\ref{e:hyp-s}) to be considered later. Then ${\mathcal X}$ is not contained 
in~$A$. Complexifying the objects under consideration, consider the holomorphic polynomial map 
$$f:\cx^2 \to \cx^2 ,$$
$$f = (f_1,f_2):(t,s) \mapsto (-3t^3-ts^2-3t^5,s^3+4t^2 s + 7 st^4) .$$
It is easy to check that there exists a neighbourhood $U$ of the origin  in $\cx^2$ such that 
$f^{-1}(0) \cap U = \{ 0 \}$, i.e., the map $f$ is of finite multiplicity at the origin.
Therefore, $\mathcal X$ is not contained in $A$.
\end{ex}

\subsection{Parametrized complex curves.}
Let $\Omega$ be a bounded domain in $\cx$ whose boundary consists of a finite number of smooth curves $C_j$. 
By a {\it parametrized complex curve} we mean a holomorphic map $h: \Omega \to \cx^n$. If in addition
$h \in C(\overline{\Omega})$, then the restriction $h\vert_{\partial \Omega}$ is 
called the boundary of the curve $h$; we use the same terminology for the image of $h$. Thus the boundary 
$\partial h(\Omega)$ is the union of the curves $h(C_j)$ with the orientation induced by $h(\Omega)$.

If $\Omega = \mathbb D$, the unit disc in $\cx$, then $h: \mathbb D \to \cx^n$ is call a {\it holomorphic}
or {\it analytic disc}, and if $\Omega = A(r,R) =  \{\zeta \in \cx: r < \vert \zeta \vert < R\}$, an annulus of radii
$0<r<R<\infty$, then $h: A(r,R) \to \cx^n$ is called a {\it holomorphic} or {\it analytic annulus}. Its boundary
is the union of two curves $h(C(r))$ and $h(C(R))$, where $C(t)=\{|\zeta|=t\}$. 

As usual, denote by  $\omega = \sum_j dx_j \wedge dy_j$ the standard symplectic form in $\cx^n$. The {\it area} of a parametrized complex curve
$h:\Omega \to \cx^n$ is given by 
$${area\,}(h) = \int_\Omega h^*\omega.$$

For a parametrized complex curve $h$ continuous up to the boundary we say that $h$ is {\it attached} or 
{\it glued} to a set $K$, if $\partial h(\Omega)\subset K$. In Section~\ref{s.5} we will consider a more
refined version of this notion.

\subsection{Currents.}
We briefly recall some standard terminology concerning currents. For a detailed exposition see, e.g., \cite{Dem}. 
Let $\Omega$ be an open subset in $\cx^n$. As usual we set $d^c = i(\overline\partial - \partial)$. Denote by 
${\mathcal D}^{p,q}(\Omega)$ the space of smooth differential forms of bidegree $(p,q)$ with compact support 
in $\Omega$. Its dual space is called {\it the space of currents} of bidimension $(p,q)$ (or bidegree $(n-p,n-q)$). 
A current $S$ of bidimension $(p,p)$, or simply a $(p,p)$-current, is called {\it positive} if for all 
$\a_j \in {\mathcal D}^{1,0}(\Omega)$, $j=1,...,p$, the current 
$S \wedge i\alpha_1 \wedge \overline\alpha_1 \wedge ...\wedge i\alpha_p \wedge \overline\alpha_p$ is a positive 
distribution. The operators $d$ and $d^c$ are defined for currents by duality. A fundamental example of a positive 
$(p,p)$-current is the current $[X]$ of integration over a complex purely $p$-dimensional analytic set $X$ of 
$\Omega$. If $X$ is closed in $\Omega$, then this current is also {\it closed} in $\Omega$, i.e., $d[X] = 0$. 
A current of bidegree $(p,q)$ can be viewed as a differential form of type $(p,q)$ with distributional coefficients. 
If a $(p,p)$-current is positive, then its coefficients are Radon measures (this follows essentially by the Riesz duality 
theorem). The mass $\vert S \vert$ of a positive $(p,p)$-current $S$ is defined by 
$\vert S \vert = S \wedge \omega^p$, where $\omega$ is the standard symplectic form. If the mass is finite, we
use the notation $\parallel S \parallel = \langle S,\omega^p \rangle = \langle \vert S \vert,1 \rangle$. The set 
$\supp (S)$, the support of a current $S$, is defined in the standard way.

\section{Polynomial convexity of an open Whitney umbrella: proof of Theorem~\ref{t:ga}}

\subsection{Reduction to a dynamical system}\label{s:1}

As it is observed in \cite{SS}, $\Sigma$ is contained in the real hypersurface 
\begin{eqnarray}
\label{hypersurf}
M = \{ (z,w) \in \cx^2: \rho(z,w) = x^2 - yv^2 + \frac{9}{4}u^2 - y^3 = 0 \} .
\end{eqnarray}
The defining function $\rho$  of $M$ is strictly plurisubharmonic in a neighbourhood of the origin
 where it admits  an isolated critical point. Hence, $M$ is smooth away from the origin and strictly pseudoconvex in  $\B(0,\e) \setminus \{ 0 \}$, for $\e>0$ small.

The crucial role in our approach is played by the so-called {\it characteristic foliation} induced by $M$ on $\Sigma$. 
Let $X$ be  a totally real surface embedded into  a real hypersurface $Y$ in $\cx^2$.  
Define on $X$ a field of lines determined at every  $p \in X $ by  
$L_p = T_p X \cap H_p Y$,  where $H_p Y = T_p Y \cap J(T_p Y)$ denotes the complex tangent line to $Y$ at the point $p$ and $J$ denotes the standard complex structure of~$\cx^2$.
 Integral curves of the line field $L_p$, i.e., curves which are tangent to $L_p$ at each point $p$,  define a foliation on 
$X$ which  is called the characteristic foliation of $X$.

To a given  local symplectomorphism  $\phi$ we associate  a complex linear map $\psi$ in order to simplify the linear part of $\psi \circ \phi$. One can show (see \cite{SS} for details) that the map $\psi$ can be chosen to depend linearly on the differential $D\phi(0)$ 
so that the differential at the origin of 
the composition $\psi \circ \phi$ is given by
\begin{equation}\label{e:differen}
D(\psi\circ\phi)(0)=
\left(\begin{array}{cc}I_2 & 0\\ E & G\end{array}\right),
\end{equation}
where $I_2$ denotes the identity 2$\times$2 matrix, and $G$ is a nondegenerate 2$\times$2 matrix. Let 
$$\Sigma'=\psi\circ \phi(\Sigma),$$ and $$M'= (\psi\circ \phi)(M).$$  
We put  $$\rho' = \rho \circ (\psi \circ \phi)^{-1}.$$
It follows from \eqref{hypersurf} and \eqref{e:differen} that 

\begin{equation}
\label{e:ro}
\rho'(z',w') = {x'}^2 + \frac{9}{4}{u'}^2 + o(\vert (z',w') \vert^2).
\end{equation} 
In particular,  the function $\rho'$ is 
strictly plurisubharmonic in a neighbourhood of the origin, and the hypersurface $M'$ is strictly pseudoconvex in 
a punctured neighbourhood of the origin.

We consider the characteristic foliation of $\Sigma\setminus\{0\} \subset M$ and 
$\Sigma'\setminus\{0\} \subset (\psi\circ\phi)(M)$.  
A characteristic foliation is invariant under biholomorphic maps. Therefore, in order to study the characteristic foliation on 
$\phi(\Sigma)$ with respect to $\phi(M)$, it is sufficient to study the characteristic foliation of 
$\Sigma' = \psi \circ \phi (\Sigma)$ induced by $M'$.

The main result which we establish in this section is  the following

\begin{prop}\label{l:2}
Let $\phi$ satisfy the assumptions of Theorem \ref{t:ga}. There exist $\e > 0$ small enough and two rectifiable arcs $\gamma_1$ and $\gamma_2$ in  
$\Sigma' \cap \mathbb B (0,\e)$ passing through the origin with the following properties:
 \begin{itemize}
 \item[(i)]  $\gamma_j$ are smooth at all points except, possibly, the origin;
 \item[(ii)] $\gamma_1 \cap \gamma_2 = \{ 0 \}$;  
 \item[(iii)] if $K$ is a  compact subset of $\Sigma' \cap \mathbb B (0,\e)$ and is not contained in
$\gamma_1 \cup \gamma_2$, then there exists a leaf $\gamma$ of the characteristic 
foliation on $\Sigma'$ such that  $K\cap \gamma \ne \varnothing$ but $K$ does 
not meet both sides of $\gamma$.
\end{itemize}
\end{prop}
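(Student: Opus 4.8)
The plan is to study the characteristic foliation on $\Sigma'$ by pulling everything back to the parameter plane $\rl^2_{(t,s)}$ via the map $\pi$ of \eqref{StandUmbr} followed by $\psi\circ\phi$. Since the characteristic line field at a point $p$ is $L_p = T_p\Sigma'\cap H_pM'$, and the hypersurface $M'$ has defining function $\rho'$ with Hessian normalized as in \eqref{e:ro}, the complex tangent direction $H_pM'$ is the kernel of $\partial\rho'$. Composing with the parametrization, the foliation is the integral foliation of a vector field $\mathcal X = X_1(t,s)\partial_t + X_2(t,s)\partial_s$ on $(\rl^2_{(t,s)},0)$, whose coefficients are obtained by contracting $d^c\rho'$ (or $\partial\rho'$) against the two coordinate tangent vectors of the composed parametrization; this is exactly the reduction to a dynamical system announced in the title of Subsection~\ref{s:1} and carried out in \cite{SS} in the real-analytic case. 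The genericity hypothesis on $j^\infty\phi$ is designed precisely so that the resulting $j^\infty(\mathcal X)$ does not lie in the pro-algebraic set $A\subset\mathcal J^\infty(2,2)$; since $\codim A = \infty$, a generic symplectomorphism produces a $\mathcal X$ of finite algebraic multiplicity at the origin, and Example~\ref{Ex0} shows the relevant leading-order model vector field (the one coming from the linear normal form) is itself not in $A$.

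First I would write down $\mathcal X$ explicitly up to the order dictated by the linear normal form \eqref{e:differen}–\eqref{e:ro}, identify its principal part with the vector field \eqref{e:hyp-s} of Example~\ref{Ex0}, and conclude that $\mathcal X$ has an isolated singularity at $0$ with finite multiplicity $\mu_0(\mathcal X)<\infty$, hence satisfies the \L ojasiewicz inequality $\|\mathcal X(t,s)\|\ge c(|t|^2+|s|^2)^k$ near $0$. Next, I would invoke the classical theory of real-analytic-type (here, finite-multiplicity smooth) planar vector fields with an isolated singularity: after finitely many blow-ups (a desingularization / resolution of the singularity), the foliation near $0$ is described by a finite set of separatrices, and away from the separatrices the leaves are regular smooth arcs spiralling or flowing between them. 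In the coordinates of Example~\ref{Ex0} the complexified vector field $f=(f_1,f_2)$ has leading terms $(-3t^3-ts^2,\; s^3+4t^2s)$, i.e. the linear part vanishes and the cubic part is nondegenerate; a blow-up analysis shows there are exactly two real separatrix branches through the origin (corresponding to the real linear factors of the cubic system, which one checks has exactly the pair $t=0$ style axes as the only real invariant directions). I would define $\gamma_1,\gamma_2$ to be the images under $\pi$ (equivalently under the composed parametrization, transported back to $\Sigma'$) of these two separatrices of $\mathcal X$; then (i) smoothness away from $0$ and (ii) $\gamma_1\cap\gamma_2=\{0\}$ follow from the resolution, the transversality of the two branches at the blow-up level, and the fact that $\pi$ is a homeomorphism near $0$.

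For part (iii): given a compact $K\subset\Sigma'\cap\B(0,\e)$ not contained in $\gamma_1\cup\gamma_2$, pull $K$ back to a compact $\widetilde K$ in the parameter plane not contained in the union of the two separatrices. Pick a point $q\in\widetilde K$ off the separatrices; the leaf $\ell_q$ of $\mathcal X$ through $q$ is, by the desingularization picture, a smooth regular arc whose two ends accumulate on the separatrices (on the circle of the blow-up), so in particular $\ell_q$ is not a closed curve and has a well-defined ``inside'' and ``outside'' locally; one then chooses $q$ to be an extreme point of $\widetilde K$ with respect to the transverse parameter of a local flow box of $\mathcal X$ — concretely, take a regular transversal $T$ to $\mathcal X$ meeting $\widetilde K$, and let $q$ be a point of $T\cap\widetilde K$ that is extremal along $T$. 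Then $\widetilde K$ meets $\ell_q$ (at $q$) but lies entirely on one side of $\ell_q$ in the flow box, hence $\widetilde K$ does not meet both sides of $\ell_q$; pushing forward by $\pi$ gives the leaf $\gamma$ of the characteristic foliation with $K\cap\gamma\ne\varnothing$ and $K$ not meeting both sides of $\gamma$. Rectifiability of $\gamma_1,\gamma_2$ is automatic since they are piecewise smooth arcs of finite length near $0$ (the \L ojasiewicz bound controls the length of separatrices).

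The main obstacle is the passage from the real-analytic arguments of \cite{SS} to the $C^\infty$ category: one can no longer use Puiseux expansions or the complexification trick directly on $\mathcal X$ to resolve the singularity, so the resolution of the foliation must be done using the finite-multiplicity hypothesis ($j^\infty\mathcal X\notin A$) together with a blow-up / normal-form analysis that is valid for smooth vector fields satisfying a \L ojasiewicz inequality — this is where the ``more advanced tools of singularity theory'' alluded to after Theorem~\ref{t:ga} enter. Controlling the geometry of the leaves (that they are genuinely arcs with two separatrix-endpoints, and that the ``two sides'' notion is globally coherent on $\Sigma'\cap\B(0,\e)$ rather than just locally) after a chain of blow-ups, and checking that the principal part of $\mathcal X$ is exactly the Example~\ref{Ex0} field for \emph{every} generic $\phi$ (not just a particular model), are the two points requiring the most care.
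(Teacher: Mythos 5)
Your reduction to the planar vector field $\mathcal X$ and your identification of the target condition ($j^\infty(\mathcal X)\notin A$, hence finite multiplicity, the \L ojasiewicz inequality, and a finite-jet-determined phase portrait) match the paper's strategy. But there is a genuine gap at the step you dispose of in one sentence: you assert that the genericity hypothesis on $j^\infty\phi$ ``is designed precisely so that'' $j^\infty(\mathcal X_\phi)\notin A$. This is exactly what has to be \emph{proved}, and it is the main new content of the paper's argument. The difficulty is that symplectomorphisms form a thin, pro-algebraic subset $\tilde S$ of $\mathcal J^\infty(4,4)$, so the fact that $A$ has infinite codimension in $\mathcal J^\infty(2,2)$ gives no information about whether $\tilde A=(\Xi)^{-1}(A)$ meets $\tilde S$ in a nowhere dense set --- a priori $\tilde A$ could contain all of $\tilde S$, making Theorem~\ref{t:ga} vacuous. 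The paper's Lemma~\ref{density} closes this by combining (a) Turaev's theorem that compositions of polynomial H\'enon-like maps are jet-dense among symplectic maps, (b) the fact that $\Xi$ restricted to the finite-dimensional moduli space of such maps is \emph{polynomial} (Lemma~\ref{HenLem1}), so that $\Phi^{-1}(A_k)$ is an algebraic subvariety of that moduli space, and (c) Example~\ref{Ex0}, which exhibits one point (the identity map) outside $\Phi^{-1}(A_k)$, making it a \emph{proper} subvariety. None of this appears in your proposal.

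Two further corrections. First, you propose to identify the principal part of $\mathcal X_\phi$ with the cubic field \eqref{e:hyp-s}; this is wrong for a general generic $\phi$, whose vector field is \eqref{e:system} with quadratic leading terms $-2g_{12}ts+\alpha_{02}s^2$, etc. The field \eqref{e:hyp-s} is the special case $\phi=\mathrm{id}$ and enters the proof only through point (c) above, not as a normal form. The phase-portrait analysis (the two arcs $\gamma_1,\gamma_2$ and the structure of leaves needed for (iii)) is that of the truncation of \eqref{e:system} under the additional generic condition $\alpha_{02}\beta_{12}\beta_{03}\ne 0$, and is imported from \cite{SS}; what the smooth case requires is only the license to truncate, which the paper obtains from Dumortier's finite determinacy theorem. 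Second, that theorem needs \emph{both} $j^\infty(\mathcal X)\notin A$ \emph{and} the existence of a characteristic orbit (to exclude center-focus type indeterminacy); your blow-up sketch never addresses the latter. Finally, your flow-box argument for (iii) is only local: extremality of $q$ along one transversal does not prevent $\widetilde K$ from reaching the other side of the leaf $\ell_q$ outside that flow box, so the global sector structure of the phase portrait from \cite{SS} is still needed there.
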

We point out that by (i) and (ii)  the union $\gamma_1 \cup \gamma_2$ does not bound any subdomain with the closure 
compactly contained in $\Sigma' \cap \mathbb B(0,\e)$.

Once Proposition \ref{l:2} is established, Theorem \ref{t:ga} follows immediately by the method of \cite{SS} which 
does not require real analyticity.  In fact, the only place in \cite{SS} where the real analyticity assumption was used is 
the proof of Proposition \ref{l:2}. The remaining part of this section is therefore devoted to the proof of 
Proposition~\ref{l:2}.

\subsection{Jets and vector fields.} It is shown in \cite{SS} that the pull-back by $\pi$ of the characteristic 
foliation on $\Sigma$ is determined by the system of ODE's of the form

\begin{equation}\label{e:hyp-s}
\left\{
\begin{array}{l}
\dot t = -3t^3-ts^2-3t^5\\
\dot s = s^3+4t^2 s + 7 st^4, 
\end{array}\right.
\end{equation}
where the dot denotes the derivative with respect to the time variable. Similarly, the pull-back of the characteristic 
foliation on $\Sigma'$ is given by
\begin{equation}\label{e:system}
\left\{
\begin{array}{l}
\dot t = \alpha(t,s) = -2g_{12}ts + \alpha_{02}s^2 - 3g_{22}t^3 + o(|t|^3+|s|^2+|ts|)\\
\dot s = \beta(t,s) = 4g_{11}t^2s + \beta_{12}ts^2 + \beta_{03}s^3+ 6g_{12}t^4 + 
o(|t^2s| + |ts^2| + |s|^3+ |t|^4) .
\end{array}
\right.
\end{equation}

System~\eqref{e:system} corresponds to a vector field 
$\mathcal X = \a\frac{\partial}{\partial t}+ \b\frac{\partial}{\partial s}$ defined in a neighbourhood
of the origin in $\rl^2$. As shown in~\cite{SS}  the vector field $\mathcal X$ does not vanish outside the origin,
i.e., the origin is its isolated singularity. We briefly recall from \cite{SS} the construction of the vector 
field $\mathcal X$ from the map $\psi \circ \phi$.
 
Consider the map $f: \rl^2 \to \rl^4$ given by $f:= \psi \circ \phi \circ \pi$. Set $X'_t = \partial f/\partial t$ and 
$X'_s = \partial f/\partial s$. Denote by $\nabla \rho'$ the gradient of the function 
$\rho' = \rho \circ (\psi \circ \phi)^{-1}$. On $\Sigma'$ it can be expressed in terms of $(t,s)$ using the 
parametrization~$f$, namely,
$$
\nabla \rho' = \left(\frac{\partial \rho'}{\partial x'}\circ f, \frac{\partial \rho'}{\partial u'}\circ f,
\frac{\partial \rho'}{\partial y'}\circ f, \frac{\partial \rho'}{\partial v'}\circ f\right).
$$
Then 
\begin{equation}\label{e:ab}
\alpha(t,s) = \langle JX'_s, \nabla \rho' \rangle, \ \ \beta(t,s)=-\langle JX_t, \nabla \rho' \rangle.
\end{equation}

Since the right-hand side of (\ref{e:ab}) is uniquely determined by $\phi$, the map 
\begin{eqnarray}
\label{FundCor}
\Xi: \phi \mapsto {\mathcal X}_\phi = \a\frac{\partial}{\partial t}+ \b\frac{\partial}{\partial s}
\end{eqnarray}
is well-defined; it associates every $C^\infty$ smooth map $\phi$ symplectic at $0$ with the 
vector field $\mathcal X_\phi$. Furthermore, we may extend the definition of $\Xi$ to all smooth
diffeomorphisms defined in a neighbourhood of the origin in $\rl^4$ by the same formula. In
this case the corresponding $\mathcal X$ may vanish also outside the origin.

\begin{rem}
\label{Loj}
In \cite{SS} Proposition \ref{l:2} is established for a $C^\infty$ vector field $\mathcal X_\phi$  under the assumption that it satisfies some generic condition and  the 
\L ojasiewicz inequality. In the real analytic case the \L ojasiewicz inequality holds automatically since the origin  is the only point where $\mathcal X_\phi$ vanishes. Our goal here is to prove that for a generic smooth symplectomorphism $\phi$ the jet of $\mathcal X_\phi$ does not belong to the pro-algebraic set $A$. This   implies the \L ojasiewicz inequality and hence is sufficient in order to deduce  Proposition \ref{l:2} from the results of \cite{SS}.
\end{rem}

Recall that for $k$ positive integer or $\infty$,  $\mathcal J^k (n,m)$ denotes the space of $k$-jets at the origin 
of smooth  maps from $\rl^n$ to $\rl^m$, while the $k$-jet at the origin of a specific map $g$ is denoted by $j^k g$. 
We also consider the subspace $\mathcal J^k_{*}(n,n)$ of 
$\mathcal J^k(n,n)$ consisting of jets of diffeomorphisms at the origin. For each $k\ge 1$, the space of 
jets in $\mathcal J^k(n,n)$ that are not diffeomorphisms is determined by a polynomial equation in $\rl^d$ 
corresponding to $\mathcal J^k(n,n)$, and therefore the space $\mathcal J^k_{*}(n,n)$
is the complement of a codimension one algebraic subvariety of $\mathcal J^k(n,n)$. Thus, $\mathcal J^\infty_{*}(n,n)$ 
is the complement of a pro-algebraic subset of $\mathcal J^\infty(n,n)$ of codimension one.

\begin{lemma}\label{l.Phik}
For every integer $k\ge 1$, the map $\Xi$ induces the map
\begin{eqnarray}\label{l:FundCorJet}
\Xi^{(k)} :{\mathcal J}^{k+1}_{*} (4,4) \to {\mathcal J}^k (2,2)
\end{eqnarray}
defined by
\begin{eqnarray}\label{FundCorJet1}
\Xi^{(k)} :j^{k+1} \phi \mapsto j^k {\mathcal X}_\phi.
\end{eqnarray}
on the space of $k+1$-jets. This map is rational (after the identification of the jet space with the corresponding 
$\rl^d$), with nonvanishing denominator for every map $\phi$. Furthermore, the following diagram commutes
\begin{eqnarray}
\label{diag0}
\begin{CD}
\{\phi : \rl^4 \to \rl^4 \} @>\Xi>> \{\mathcal X: \rl^2 \to \rl^2\}\\
@VVj^{k+1}V	@VVj^k V\\
\mathcal J^{k+1}_* (4,4) @>\Xi^{(k)}>> \mathcal J^k (2,2)  .
\end{CD}
\end{eqnarray}
\end{lemma} 

\begin{proof} Let $\psi$ be defined as in Section~\ref{s:1}, and set $F=\psi\circ \phi$. 
Denote by $x$ coordinates in $\rl^4$. Differentiating the identity $F^{-1}(F(x)) = x$, we obtain
$DF^{-1}(F(x)) = (DF(x))^{-1}$. By Cramer's rule, the components of $(DF(x))^{-1}$, the inverse matrix to 
$DF(x)$, can be expressed as rational functions of the components of $DF(x)$. Differentiation of this identity 
further using the Chain Rule shows that for any integer $k$, derivatives at $F(0)$ of $F^{-1}$ of order $k$  
are rational functions of the derivatives at the origin of $F$ of order up to $k$, and therefore, are rational
functions of the derivatives of $\phi$ at zero. Note that the denominators in these functions are in fact powers 
of the determinant of $DF(0)$ which does not vanish for invertible $\phi$.

By differentiating at the origin $\a$ and $\b$ given by \eqref{e:ab} up to order $k$ one can express 
each derivative as a rational function of the derivatives of $\phi$ at the origin, which shows that the map $\Xi^{(k)}$ 
is rational. Note that because \eqref{e:ab} involves first order derivatives of $\phi$, one requires derivatives of order
$k+1$ of $\phi$ to determine uniquely the $k$-jet of $\mathcal X$. This makes the diagram commute. 
\end{proof}

\begin{lemma}\label{SymplJet}
For every $k\geq 1$ there exists an algebraic subvariety $S_k$ in $\mathcal J^k(4,4)$ which contains $k$-jets at the 
origin of all symplectomorphisms $\phi$. Furthermore, there exists a pro-algebraic set $S\subset \mathcal J^\infty(4,4)$,
such that $j^\infty \phi \in S$ for all symplectomorphisms $\phi$.
\end{lemma}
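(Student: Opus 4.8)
The plan is to exhibit a finite system of polynomial relations in the jet coordinates that every symplectomorphism must satisfy, and then read off the pro-algebraic set. First I would recall that $\phi$ is symplectic precisely when $\phi^*\omega = \omega$, i.e. when $D\phi(x)^t\,\Omega\,D\phi(x) = \Omega$ for \emph{every} $x$ in a neighbourhood of the origin, not merely at $x=0$. Writing $\phi = (\phi_1,\phi_2,\phi_3,\phi_4)$ and expanding $\phi^*(dx_1\wedge dy_1 + dx_2\wedge dy_2) - \omega$, the coefficient of each basic $2$-form $dx_i\wedge dx_j$ is a polynomial in the first-order partial derivatives $\partial\phi_a/\partial x_b$. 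The vanishing of this coefficient \emph{as a function near $0$} is equivalent to the vanishing of all of its Taylor coefficients at $0$, and each such Taylor coefficient is a polynomial in the derivatives of $\phi$ at the origin, i.e. a polynomial function on the jet space.

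Concretely, for fixed $k\ge 1$, the condition ``the $(k-1)$-jet of the form $\phi^*\omega-\omega$ vanishes'' translates into finitely many polynomial equations on $\mathcal J^k(4,4)$ (finitely many because a $(k-1)$-jet of a $1$-parameter family of $2$-forms has finitely many coordinates, and each coefficient of the form depends on $\phi$ only through derivatives of order $\le k$). Let $S_k\subset\mathcal J^k(4,4)$ be the common zero set of these polynomials; it is by construction an algebraic subvariety, and by the preceding paragraph every symplectomorphism $\phi$ satisfies $j^k\phi\in S_k$. Note $S_k$ need not consist \emph{only} of jets of symplectomorphisms — there may be non-symplectic maps, or formal jets not realized by any symplectomorphism, whose finite jet happens to annihilate those polynomials — but that is irrelevant: we only need containment of the genuine symplectomorphism jets.

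For the pro-algebraic statement, I would check compatibility under the projections $\pi^l_k$. If $l\ge k$, then the defining polynomials of $S_k$, viewed as functions on $\mathcal J^l(4,4)$ via $\pi^l_k$, are among the defining polynomials of $S_l$ (the equations at level $l$ include all equations at level $k$, plus higher-order Taylor-coefficient conditions). Hence $(\pi^l_k)^{-1}(S_k)\supset S_l$, so the sets $(\pi^\infty_k)^{-1}(S_k)$ form a decreasing family, and we may set
$$
S = \bigcap_{k=1}^\infty (\pi^\infty_k)^{-1}(S_k),
$$
which is pro-algebraic by definition. Since $j^k\phi\in S_k$ for every $k$ whenever $\phi$ is a symplectomorphism, we get $j^\infty\phi\in S$, as required.

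The only real subtlety — which I would address carefully rather than wave away — is the bookkeeping that makes each Taylor coefficient of $\phi^*\omega-\omega$ a \emph{polynomial}, and of controlled order, in the jet of $\phi$: this is immediate since $\phi^*\omega$ has coefficients bilinear in the first derivatives $\partial\phi_a/\partial x_b$, and the Taylor coefficient of order $m$ of such a product is a finite sum of products of derivatives of $\phi$ of total order $m+2$, hence a polynomial function on $\mathcal J^{m+2}(4,4)$. Thus the level-$k$ equations are genuinely polynomial on $\mathcal J^k(4,4)$, $S_k$ is Zariski closed, and everything is algebraic as claimed. No estimate on $\codim S_k$ is needed here (and indeed one does not expect $S$ to have infinite codimension); the lemma only asserts existence of the containing (pro-)algebraic sets, which the construction above delivers.
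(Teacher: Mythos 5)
Your proof is correct and follows essentially the same route as the paper: both write the symplectic condition $\phi^*\omega=\omega$ as a system of polynomial equations in the first derivatives of $\phi$ (the paper via the explicit $2\times 2$ minors ${\rm Jac}(l,m,j,k)$), then differentiate up to order $k-1$ at the origin to get polynomial equations on $\mathcal J^k(4,4)$ defining $S_k$, and take $S=\bigcap_k(\pi^\infty_k)^{-1}(S_k)$.
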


\begin{proof}
A local diffeomorphism $\phi$ is symplectic if and only if $\phi^*\omega = \omega$. This is a system of first order 
partial differential equations on components of $\phi$. To write them explicitly, denote by $x = (x_1,...,x_4)$ the 
coordinates in $\rl^4$, so $\omega = dx_1 \wedge dx_2 + dx_3 \wedge dx_4$ and $\phi = (\phi_1,...,\phi_4)$. 
Denote by ${\rm Jac}(l,m,j,k)$ the determinant of the 2$\times$2 minor of $D\phi(x)$ corresponding to derivatives of 
$(\phi_l,\phi_m)$ with respect to variables $(x_j,x_k)$, i.e.,
$$
{\rm Jac}(l,m,j,k) = \frac{\partial \phi_l}{\partial x_j}\frac{\partial \phi_m}{\partial x_k} - 
\frac{\partial \phi_m}{\partial x_j}\frac{\partial \phi_l}{\partial x_k} .
$$
Consider the system of equation 
\begin{eqnarray}
\label{SympPDE}
{\rm Jac}(1,2,j,k) + {\rm Jac}(3,4,j,k)  - d_{jk} = 0, \ \ j < k,
\end{eqnarray}
where $d_{jk}$ is equal to  $1$ if $j=1$, $k = 2$ or $j=3$, $k = 4$ and to $0$ in other cases. For $x=0$
this system can be interpreted as a polynomial equation in $\mathcal J^k (4,4)$, say, if $j^1\phi$ satisfies this
equation then it simply means that $D\phi(0)$ is symplectic. Applying to (\ref{SympPDE})  the partial 
derivative operators up to the order $k-1$ we obtain that $j^k\phi$ satisfies a polynomial system 
of equations in $\rl^d$ corresponding to $\mathcal J^k(4,4)$.  Finally, the set
\begin{equation}\label{e.sgerms}
S= \bigcap_{k=1}^\infty (\pi^\infty_k)^{-1} (S_k)
\end{equation}
is pro-algebraic and contains jets of all symplectic maps.
\end{proof}

\subsection{H\'enon-like symplectic maps and polynomial approximation.}\label{s.turaev} 
In the proof of Proposition~\ref{l:2} we will need the  results on (real) symplectic polynomial approximation due to Turaev~\cite{Tur}, which we 
describe below. Note that the problem of symplectic polynomial approximation of holomorphic symplectomorphisms was studied by Forstneri\v c \cite{Fo1,Fo2}. For the standard symplectic form $\omega=\sum_j dx_j \wedge dy_j$ in $\rl^{2n}$ with
coordinates $(x,y)=(x_1, \dots, x_n, y_1, \dots, y_n)$, a {\it H\'enon-like map} is a symplectic map given by 
\begin{equation}\label{e.henon}
H(x,y) \to (y, -x + \nabla V(y)),
\end{equation}
where $V: \rl^n \to \rl$ is a smooth function. A H\'enon-like map is a global diffeomorphism of $\rl^{2n}$ and the 
inverse is also a H\'enon-like map. If $V$ is a polynomial, then $H$ is a polynomial map as well and is called 
{\it a polynomial H\'enon-like map}. Given a smooth symplectic diffeomorphism $\phi: \mathbb B_{n} \to \rl^{2n}$, 
a compact  $K \subset \mathbb B_{n}$, an integer $m>0$, and $\e>0$,  there exists a collection of symplectic polynomial 
H\'enon-like maps $\{H_j\}$, $j=1,\dots, N$, such that the composition $H_N \circ \dots \circ H_1$ approximates $\phi$ 
with accuracy $\e$ in the $C^m$-topology on $K$. By taking $K$ to be the origin, Turaev's theorem
gives for any $m>0$ a symplectic polynomial map whose jet of order $m$ at the origin is arbitrarily 
close to that of a given smooth symplectic map. In particular, this means that jets of symplectic polynomials 
which are compositions of polynomial maps of the form \eqref{e.henon} are dense in the Whitney topology in the 
space of all jets of symplectic maps in $\mathcal J^\infty (2n, 2n)$. 

Turaev \cite[Thm 1]{Tur} proved also  a more precise result allowing the choice of the approximating symplectic polynomial map 
$H_N$ in a more restricted form:

\begin{theorem}
\label{Turaev}
 Let $U$ be a ball in $\mathbb R^{2n}$, and let $\phi: U \to \mathbb R^{2n}$ be a 
$C^r$-smooth (with an integer $r > 0$) symplectic diffeomorphism. Then, for any compact set $K\subset U$ and any $\e>0$ there exists
a polynomial $V: \mathbb R^n \to \mathbb R$, a constant vector $\eta\in \mathbb R^n$ and an integer $N>0$
such that the $4N$-th iteration 
\begin{eqnarray}
\label{Henon1}
H_N = \underbrace{H^\eta \circ ... \circ H^\eta}_{4N\ {\rm times}}
\end{eqnarray} 
of the symplectic map 
\begin{eqnarray}
\label{Henon2}
H^\eta: (x,y) \mapsto (y + \eta,  -x + \nabla V(y)) 
\end{eqnarray}
approximates $F$ with the accuracy $\e$ in the $C^r$-topology:
\begin{eqnarray*}
\parallel \phi - H_N \parallel_{C^r(K)} < \e .
\end{eqnarray*}
\end{theorem}

Notice that $H^\eta$ is not a H\'enon-like map for $\eta \neq 0$. However, it coincides with a H\'enon-like map 
after the shift of coordinates $y \mapsto y + \eta$. 

\begin{rem}
\label{rem1}
For the Henon-like map $H^0:(x,y) \mapsto (y,-x)$ (corresponding to $\eta = 0$ and $V = 0$) its  4-th iteration is equal to the identity map. This explains why the map $H_N$ is defined by means of the $4N$-th iteration of $H^\eta$.
\end{rem}

From now on we restrict our considerations to the case of  symplectomorphisms defined on  $\rl^4$, i.e., on $\cx^2$ with the coordinates $z_j = x_j + iy_j$, $j=1, 2$. Denote by $\rl[y_1,y_2]$ (resp. $\rl_l[y_1,y_2]$) the vector space of real polynomials (resp. of degree $\le l$) in $y_1$, $y_2$. Given $N>0$, any  polynomial $V \in \rl[y_1, y_2]$  and a vector $\eta \in \rl^2$ define an $H$-map  by formulas (\ref{Henon1}), (\ref{Henon2}). Denote this map by $H_{N,V,\eta}$ and the set of all such maps by $\mathcal H(N)$. In what follows we call in short these symplectic polynomial maps {\it H-maps of order $N$}.  Of course, in general the degree of a polynomial $H_N$ can be  higher than $N$. If in the above definition we consider only 
$H$-maps corresponding to generators $V \in \rl_l[y_1,y_2]$, we obtain a subset of $\mathcal H(N)$ denoted by 
$\mathcal H(l,N)$. The degrees of these maps are bounded above by $l^{4N}$ (by $(l-1)^{4N}$ for $l > 1$). Clearly,
$\mathcal H(l,N) \subset \mathcal H(l+1,N)$ and 
$$\mathcal H(N) = \cup_{l\geq 0} \mathcal H(l,N) .$$ 
Denote by $\mathcal V(l,N)$ the vector space  of polynomial maps $\rl^4 \to \rl^4$ of degree at most $l^{4N}$. 
We obtain a well-defined map
$$\theta: \rl_l\,[y_1,y_2] \times \rl^2 \to \mathcal V(l,N) ,$$
$$\theta: (V,\eta) \mapsto H_{N,V,\eta} .$$

Denote by $i_1:\rl_l[y_1,y_2] \to \rl^d$, $d = d(l)$, the standard linear isomorphism that associates with a polynomial $P$  
the vector of $\rl^d$ formed by the coefficients of $P$. Similarly, we also identify the vector space $\mathcal V(l,N)$  
with some $\rl^p$, $p = p(l,N)$ by means of an isomorphism $i_2$. We obtain the  map
$$\Theta: \rl^d \times \rl^2 \to \rl^p$$
defined by the commutative diagram
\begin{eqnarray}
\label{diag1}
\begin{CD}
\rl_l[y_1,y_2] \times \rl^2  @>\theta>> \mathcal H(l,N) \subset\mathcal V(l,N)\\
@VVi_1 \otimes id V	@VV i_2 V\\
\mathcal \rl^d \times \rl^2 @>\Theta>> \rl^p  .
\end{CD}
\end{eqnarray}

\smallskip

Clearly, the map $\Theta$ is polynomial.  One can view the space $\rl^d \times \rl^2$ as the moduli space for the 
set  $\mathcal H(l,N)$.  The inverse map 
$$(H^{\eta})^{-1}: (x',y') \mapsto (-y' + \nabla V(x'-\eta), x' - \eta)$$ 
also is a polynomial map of the same degree as $H^\eta$ with coefficients depending polynomially on the coefficients of 
the generating polynomial $V$ and the vector  $\eta$. Hence, the maps 
$$\tilde\theta: \rl_l\,[y_1,y_2] \times \rl^2 \to \mathcal V(l,N),$$
$$\tilde \theta: (V,\eta) \mapsto (H_{N,V,\eta})^{-1},$$
also define  a polynomial map
$$\tilde\Theta:\rl^d \times \rl^2 \to \rl^p$$ 
by the commutative  diagram
\begin{eqnarray}
\label{diag2}
\begin{CD}
\rl_l[y_1,y_2] \times \rl^2  @>\tilde\theta>> \mathcal H(l,N)\subset \mathcal V(l,N)\\
@VVi_1 \otimes id V	@VV i_2 V\\
\mathcal \rl^d \times \rl^2 @>\tilde\Theta>> \rl^p  .
\end{CD}
\end{eqnarray}

\smallskip

For $H_{N,V,\eta} \in \mathcal H(l,N)$ consider the vector field $\mathcal X_{H_{N,V,\eta}} = \Xi(H_{N,V,\eta})$ defined by (\ref{FundCor}). Using the described above global  parametrizations of $\mathcal H(l,N)$ by its moduli space via $\Theta$ and $\tilde\Theta$, and the definition of the map $\Xi$ in (\ref{FundCor}), we obtain  that $\mathcal X_{H_{N,V,\eta}}$ is a polynomial vector field.  Its degree  (the maximal degree of its coefficients) is bounded by some $d' = d'(l,N)$.
Furthermore, the coefficients of $\mathcal X_{H_{N,V,\eta}}$ are polynomial functions of the coefficients of $V$ and 
vector $\eta$. By analogy with the above construction, let us   identify the space $\mathcal W(d')$ of polynomial vector 
fields of degree at most $d'$ with some $\rl^q$, $q = q(l,N)$, by means of an isomorphism 
$i_3: \mathcal W(d') \to \rl^q$. We obtain the following

\begin{lemma} \label{HenLem1}
The map 
$$\Lambda:\rl^d \times \rl^2 \to \rl^q$$
defined by the commutative diagram
\begin{eqnarray}
\label{diag3}
\begin{CD}
\mathcal \rl^d \times \rl^2  @>\Lambda>> \rl^q \\
@VVi_2^{-1} \circ \Theta V	@VV i_3^{-1} V\\
\mathcal H(l,N) @>\Xi>>  \mathcal W(d')
\end{CD}
\end{eqnarray}
\bigskip
is a polynomial map.
\end{lemma} 
This lemma implies that the maps $\Xi^{(k)}$ restricted to the set of jets of symplectomorphisms from $\mathcal H(l,N)$ 
are not just rational, but even polynomial. 

\subsection{Proof of Proposition \ref{l:2}} 
The pro-algebraic set $A\subset \mathcal J^\infty(2,2)$ defined in (\ref{e.proA}) plays a central role in the local 
theory of vector fields in $\rl^2$ with isolated singularities in view of the work of Dumortier \cite{d}. 
In order to state its result, we still need to recall some standard notions from the local theory of dynamical systems.

Two germs $\tilde X$, $\tilde Y$ of vector fields at the origin in $\rl^n$ are called {\it topologically equivalent} 
(or $C^0$-equivalent) if for some (and hence for all) representatives $X$, $Y$, there exist neighbourhoods $U$ 
and $W$ of the origin in $\rl^n$, and a homeomorphism $h:U \longrightarrow W$,  mapping integral curves of 
$X$ to integral curves of $Y$ preserving the phase portrait, but not necessarily the parametrization.

The jet $j^k X$ of a vector field $X$ is called $C^0$-{\it determining} if  any germ $Y$ with 
$j^k (X) = j^k (Y)$ is topologically equivalent to $X$. We say that (a germ of) a vector field $X$ is {\it finitely 
determined} if there exists some $k$ such that $j^k(X)$ is $C^0$-determining. A {\it characteristic orbit}
of $X$ is an integral curve that asymptotically approaches the singularity in such a way that the tangent
line has a well-defined limit. The main theorem of \cite{d} (see also Theorem 3.3 in \cite{dull} or Theorem 2.2  \cite{duro} for a more detailed account and more general results) states that if $X$ has a characteristic orbit and 
$j^\infty(X)$ is not contained in $A$ (in fact, it suffices to impose a weaker assumption that $j^\infty(X)$ satisfies  the \L ojasiewicz inequality), then $X$ is finitely determined. Further, whether $X$ has a characteristic orbit depends only on a jet of $X$ of some finite order.

Let $A_k$ be defined as in \eqref{e.Ak}. Since by Lemma~\ref{l.Phik} the map $\Xi^{(k)}$ is rational,
the set
$$
\tilde A = \bigcap_{k=1}^\infty \tilde A_k ,
$$
where $$\tilde A_k = (\pi^\infty_k)^{-1} \left(\Xi^{(k)}\right)^{-1}(A_k),$$
is pro-algebraic in $\mathcal J^\infty (4,4)$. Let $\tilde S \subset \mathcal J^\infty(4,4)$ be the subset of jets of symplectomorphisms, so $\tilde S \subset S$. 
\begin{lemma}
\label{density}
The intersection  $\tilde S \cap \tilde A$ is closed nowhere dense in $\tilde S$ in the Whitney topology.
\end{lemma}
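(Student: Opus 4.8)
The plan is to prove nowhere density by a dimension-count / transversality argument carried out at each finite jet level, then to pass to the pro-algebraic limit. Since $\tilde A_k$ is closed in $\mathcal J^\infty(4,4)$ and its defining variety in $\mathcal J^{k+1}(4,4)$ is the preimage $\left(\Xi^{(k)}\right)^{-1}(A_k)$ of the algebraic set $A_k$ under the rational map $\Xi^{(k)}$, it suffices to show that for \emph{some} $k$ the set $S_{k+1}\cap\left(\Xi^{(k)}\right)^{-1}(A_k)$ is a proper algebraic subset of $S_{k+1}$, where $S_{k+1}$ is the algebraic variety of Lemma~\ref{SymplJet} carrying jets of symplectomorphisms. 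Indeed, if at one level $k_0$ the intersection is proper, then already $(\pi^\infty_{k_0+1})^{-1}(S_{k_0+1}\cap(\Xi^{(k_0)})^{-1}(A_{k_0}))$ is nowhere dense in $\tilde S$ in the Whitney topology, and $\tilde S\cap\tilde A$ is contained in this set. So the whole statement reduces to exhibiting a single symplectomorphism $\phi$ whose associated vector field $\mathcal X_\phi$ has finite algebraic multiplicity at the origin, i.e. $j^\infty(\mathcal X_\phi)\notin A$.

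Here is where Example~\ref{Ex0} enters. The vector field $\mathcal X$ of \eqref{e:hyp-s} is exactly $\Xi$ applied to the ``trivial'' embedding $\pi$ of the standard umbrella (equivalently, to $\phi=\mathrm{id}$, since in that case $\psi$ is also the identity and $f=\pi$); and Example~\ref{Ex0} shows, by complexifying and checking that the polynomial map $f=(f_1,f_2)$ has an isolated zero at the origin, that $\mathcal X$ is \emph{not} in $A$. Therefore $j^\infty(\mathrm{id})\notin\tilde A$, so $\tilde A\ne\tilde S$. But a single point outside a closed pro-algebraic set does not yet give nowhere density; I need this at a finite level. So I fix $k_0$ large enough that $j^{k_0}(\mathcal X)\notin A_{k_0}$ (such $k_0$ exists precisely because $\mu_0(\mathcal X)<\infty$, as recalled in Section~\ref{s.A}), which means $j^{k_0+1}(\mathrm{id})\notin\left(\Xi^{(k_0)}\right)^{-1}(A_{k_0})$. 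Since $\left(\Xi^{(k_0)}\right)^{-1}(A_{k_0})$ is Zariski-closed in $\mathcal J^{k_0+1}(4,4)$ (preimage of an algebraic set under a rational map with nonvanishing denominator, by Lemma~\ref{l.Phik}) and does not contain the point $j^{k_0+1}(\mathrm{id})$, its intersection with the irreducible components of $S_{k_0+1}$ through that point is a \emph{proper} subvariety of $S_{k_0+1}$ — provided $j^{k_0+1}(\mathrm{id})$ lies on $S_{k_0+1}$, which it does, the identity being symplectic.

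There is one gap to close carefully: proper Zariski-closed subsets of a variety $S_{k_0+1}$ need not be nowhere dense in $S_{k_0+1}$ unless one knows that $S_{k_0+1}$ (or at least the relevant union of its components) is contained in the Zariski closure of the jets of \emph{actual} symplectomorphisms, not merely cut out by the necessary differentiated equations \eqref{SympPDE}. Here I would invoke Turaev's theorem from Section~\ref{s.turaev}: the jets of polynomial H\'enon-like compositions are dense in the Whitney topology in the space of all jets of symplectic maps, and these jets trace out the image of the polynomial maps $\Phi$ of \eqref{e.phi}. Thus the jets of genuine symplectomorphisms are Whitney-dense in $\tilde S$, so a set that is nowhere dense in the ambient algebraic variety $S_{k_0+1}$ — in particular any proper subvariety — is nowhere dense in $\tilde S$. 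Pulling back by $\pi^\infty_{k_0+1}$ and using continuity of the projection preserves nowhere density, and since $\tilde S\cap\tilde A\subset(\pi^\infty_{k_0+1})^{-1}\big(S_{k_0+1}\cap(\Xi^{(k_0)})^{-1}(A_{k_0})\big)$, we conclude that $\tilde S\cap\tilde A$ is nowhere dense in $\tilde S$.

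The main obstacle is precisely this last point — ensuring that ``proper at finite level'' upgrades to ``nowhere dense in $\tilde S$'' — because $\tilde S$ is not itself all of the algebraic variety $S_{k_0+1}$ but only the (Whitney-closure of the) subset of realizable symplectic jets, and one must be sure the one witness point $j^{k_0+1}(\mathrm{id})$ lies on the component of $S_{k_0+1}$ that is actually populated by symplectomorphism jets. Turaev's density result is exactly what resolves this, together with the observation that $\mathcal H(l,N)$ and its parametrization $\Phi$ exhaust, in the limit over $l,N$, a Whitney-dense subset of all symplectic jets; the rest is the routine verification that preimages under $\Xi^{(k_0)}$ and $\pi^\infty_{k_0+1}$ of proper algebraic sets behave well, which is guaranteed by Lemmas~\ref{l.Phik} and~\ref{SymplJet}.
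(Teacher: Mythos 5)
Your proposal assembles the right ingredients (reduction to a single finite jet level, Turaev's approximation, Example~\ref{Ex0} as the witness of non-degeneracy, polynomiality of $\Phi$), and you correctly identify where the difficulty lies; but the step you offer to resolve it does not work. You claim that a proper algebraic subvariety of $S_{k_0+1}$ is nowhere dense in $S_{k_0+1}$ and hence in $\tilde S$. For \emph{real} algebraic varieties this is false in general: $S_{k_0+1}$ is only the zero set of the differentiated necessary equations \eqref{SympPDE}, it may be reducible and non-equidimensional, a proper Zariski-closed subset may contain entire irreducible components or lower-dimensional strata that are Euclidean-open in the variety, and your single witness $j^{k_0+1}(\mathrm{id})$ only controls the components passing through the identity. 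Nothing in your argument rules out that $(\Xi^{(k_0)})^{-1}(A_{k_0})$ swallows a whole Whitney-neighbourhood of the jet of some symplectomorphism $\phi$ far from the identity. Invoking density of H\'enon-like jets in $\tilde S$ does not repair this, because you still need to know that \emph{near the approximating $H$-map} there are symplectomorphism jets outside the bad set, and that is exactly what remains unproved in your scheme.

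The paper closes this gap by never arguing inside $S_{k}$ at all. Assuming a whole neighbourhood $U$ of some $j^\infty\phi$ lies in $\tilde S\cap\tilde A$, it first uses Turaev to replace $\phi$ by an $H\in\mathcal H(l,N)$ with $j^\infty H\in U$, and then perturbs $H$ \emph{within the parameter space} $\rl^d\times\rl^2$ of $\mathcal H(l,N)$. There the composite map $\Phi$ of \eqref{e.phi} is polynomial, so $\Phi^{-1}(A_{k})$ is an algebraic subset of an \emph{irreducible affine space}; Example~\ref{Ex0} shows it is proper, hence its complement is dense in $\rl^{d+2}$, which produces $\tilde H\in\mathcal H(l,N)$ with $j^m\tilde H$ still in $\B(j^m\phi,2\e)$ but $j^\infty(\mathcal X_{\tilde H})\notin A$ — contradicting $j^\infty\tilde H\in\tilde A$. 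The point is that ``proper algebraic $\Rightarrow$ nowhere dense'' is legitimate in $\rl^{d+2}$, and the identity and the approximant $H$ live in the same connected irreducible parameter space, so one witness suffices. If you reroute your argument through this moduli space (which you already mention but do not actually use for the perturbation), the proof goes through; as written, the passage from the finite-level properness of $S_{k_0+1}\cap(\Xi^{(k_0)})^{-1}(A_{k_0})$ to nowhere density in $\tilde S$ is a genuine gap.
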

\begin{proof}
The set $\tilde A$ is closed, so 
$\tilde S \cap \tilde A$ is closed in $\tilde S$. Therefore we only need to prove that $\tilde S \cap \tilde A$ is nowhere dense in $\tilde S$.

We will use notations introduced in Section~\ref{s.turaev}. Arguing by contradiction suppose that there exist a symplectomorphism $\phi$ and an open neighborhood  $U$ of  $j^\infty \phi$ in 
$\mathcal J^\infty(4,4)$ such that $\tilde S \cap U$ is contained in $\tilde S \cap \tilde A$. 

From the properties 
of the Whitney topology, there exist $m>0$ and $\e>0$ such that 
$$
(\pi^\infty_m)^{-1}\left(\mathbb B(j^m\phi, 2\e) \right) \subset U,
$$
where the ball is in $\rl^s$ corresponding to $\mathcal J^m(4,4)$. By Theorem \ref{Turaev}, the map $\phi$ 
can be approximated by polynomial symplectomorphisms in the $C^m$-topology, and so
we conclude that there exist $l,N$ and  a  polynomial symplectomorphism $H \in \mathcal H(l,N)$ such that  
$j^m H \in \mathbb B(j^m\phi, \e)$. This means that $j^\infty H\in \tilde S \cap U$,  so by our assumption, $j^\infty H\in \tilde S \cap \tilde A$.

For $k$ big enough, every polynomial symplectomorphism from $\mathcal H(l,N)$ is uniquely determined by its $k$-jet. So we can assume that $\mathcal H(l,N)$ is included in $J^k(4,4)$. If $k$ satisfies  additionaly the condition $k \geq max(m,d')$ (where $d' = d'(l,N)$ is defined in Section~\ref{s.turaev}), then every polynomial vector field from $\mathcal W(d')$ is
also  uniquely determined by its $k$-jet, and we can view $\mathcal W(d')$ as a subset of the jet space $\mathcal J^k(2,2)$ as well.
 Fix now such a $k$.  The map $\Lambda$ of Lemma~\ref{HenLem1}
defines a polynomial map
\begin{equation}\label{e.phi}
\Phi : \rl^d \times \rl^2 \to J^k(2,2)
\end{equation}
given by $\Phi = j^k \circ i_3^{-1} \circ \Lambda $ and satisfying
$j^k \circ \Xi (i_2^{-1} \circ \Theta (\xi)) = \Phi(\xi)$ for all $\xi \in  \rl^d \times \rl^2$.

 Since $A_k$ is an algebraic subset of $J^k(2,2)$ and the map $\Phi$ is polynomial, the pull-back   $Z:=\Phi^{-1}(A_k)$ is a proper real algebraic subvariety of $\rl^{d+2}$ defined by 
 $$
 Z = \{ x \in \rl^{d+2}: Q_j(x) = 0, \ j=1,...,s\}.
 $$ 
 Here $Q_j:\rl^{d+2} \longrightarrow \rl$ are polynomials. Recall that by  Remark \ref{rem1} the image $i_2^{-1} \circ \Theta(0)$ of the origin $0 \in \rl^{d+2}$ is the jet of the identity map in $\mathcal H(l,N)$ (cf.  diagram ~\eqref{diag1}). It follows from diagram ~\eqref{diag3}  that the image $\Phi(0)$  coincides with the jet of the vector field ~\eqref{e:hyp-s}.
Therefore, as seen in Example~\ref{Ex0},  the origin $0 \in \rl^{d+2}$ does not belong to $Z$;  in particular, $Z$ is a proper subset of $\rl^{d+2}$.

Since $j^\infty H \in \tilde A$, the point $p: = (\Theta^{-1}\circ i_2)(H)$ belongs to  $Z$ (cf. diagram (\ref{diag1})). Consider in $\rl^{d+2}$ the real line $L: \rl \ni t \mapsto tp$   through the origin and $p$. Since the line $L$ is not contained in the real algebraic set $Z$, at least one of the polynomials $Q_j(tp)$ does not vanish identically in $t \in \rl$; hence $L$  intersects $Z$ in a finite set of points. Choosing a point $\tilde p \in L\setminus Z$ close enough to $p$, we obtain the  
 polynomial symplectomorphism $\tilde H:= (i_2^{-1} \circ \Theta)(\tilde p) \in \mathcal H(l,N)$ satisfying $j^m\tilde H \in \B(j^mH,\e)$.  Then for the corresponding vector field $\mathcal X_{\tilde H} = \Xi(\tilde H) \in \mathcal W(d')$ we have $j^\infty(\mathcal X_{\tilde H})\notin A$, and so $j^\infty \tilde H\notin \tilde A$ -- contradiction. \end{proof}

Now we are able to conclude the proof of Proposition \ref{l:2}. By Lemma \ref{density},
for a smooth symplectomorphism $\phi$  the condition $j^\infty \phi \notin \tilde A$ is
generic, i.e., it holds for a subset  ${\mathcal O}_1$, which open and dense in $\tilde S$. 
For such a symplectomorphism  the corresponding vector field $\mathcal X_\phi$ defined by (\ref{FundCor}) satisfies $j^\infty\mathcal X_\phi \notin A$ and, in particular, satisfies the \L ojasiewicz inequality.
Denote by ${\mathcal O}_2$ the set of jets $j^\infty\phi \in \tilde S$ such that   the  coefficients $\alpha_{02}$, $\beta_{12}$, $\beta_{03}$ 
in \eqref{e:system} (i.e., from the Taylor expansion of the vector field $\mathcal X_\phi$) do not vanish. As shown in \cite{SS}, this condition is also generic, i.e., ${\mathcal O}_2$ is an open dense subset of $\tilde S$. Hence,  
the intersection ${\mathcal O} := {\mathcal O}_1 \cap {\mathcal O}_2$ also is open dense in $\tilde S$.

Consider  a symplectomorphism $\phi$ with $j^\infty \phi\in \mathcal O$. We need to show that the corresponding 
vector field $\mathcal X_\phi$ satisfies Proposition \ref{l:2}. 
Since $j^\infty \phi\in \mathcal O$, the \L ojasiewicz inequality for $\mathcal X_\phi$ holds, and
in view of the results in~\cite{SS} this already implies Proposition~\ref{l:2} (cf. Remark \ref{Loj}).
For reader's convenience we summarize the argument. Recall that whether a vector field satisfying the 
\L ojasiewicz inequality has a characteristic
orbit is determined by its jet at the origin of some finite order. Using truncation of the Taylor expansion of 
$\mathcal X_\phi$ of arbitrarily high degree and its Newton diagram, it was shown in~\cite{SS} that 
the topological phase portrait of the truncated vector field~\eqref{e:system} is a ``saddle", i.e., it 
precisely satisfies requirements (i), (ii), and (iii) of Proposition \ref{l:2}. In particular, it has characteristic
orbits, the curves $\gamma_j$ of Proposition \ref{l:2}. Hence, by the theorem of Dumortier,
the vector field $\mathcal X_\phi$ is finitely determined, and therefore, $\mathcal X_\phi$ 
has the same phase portrait as its polynomial truncation of some sufficiently high degree, that is the jet at zero of
$\mathcal X_\phi$ of some finite order. This concludes the proof of Proposition \ref{l:2} and proves Theorem \ref{t:ga}.

\medskip

In what follows by a {\it generic} open Whitney umbrella point, or simply a generic umbrella point, we mean a point 
for which the conclusion of Theorem~\ref{t:ga} holds. If  $\phi$ is an arbitrary (not necessarily generic)  symplectomorphism, we call $\phi(0)$ {\it an open umbrella point} for the surface $\phi(\Sigma)$.

\begin{rem}
Instead of considering the space $\tilde S$ of jets of symplectomorphisms as above, we may consider the space 
of jets of maps which only have a symplectic linear part at zero. Such maps define characteristic foliation on 
$\Sigma'$ which can be singular at more points than just the origin. However, a similar argument as in 
Theorem~\ref{t:ga} shows that for a generic map $\phi$ symplectic at zero, the characteristic foliation is singular 
only at zero in a small neighbourhood of the origin, and has the phase portrait determined by \eqref{e:system}. 
Thus, Theorem~\ref{t:ga} also holds under a weaker assumption that $D\phi(0)$ is symplectic. 
\end{rem}

\section{Polynomial convexity near double points: proof of Theorem~\ref{InterTheo}}\label{s.X}

We recall the result  due to Weinstock \cite{Wein} that can be stated as follows:

\medskip

{\it Let $E_1$ and $E_2$ be maximally totally real linear subspaces of $\cx^n$ intersecting transversally at the origin. 
Then either the union $E_1 \cup E_2$ is polynomially convex or there exists an  analytic annulus 
$h : A(r,R) \to \cx^n$ smooth up to the boundary such that $h(C(r)) \subset E_1$  and $h(C(R)) \subset E_2$, i.e., 
the union $E_1 \cup E_2$ contains the boundary of $h$.}

\medskip

As a consequence we have 
\begin{lemma}\label{Eigen}
Let $E_1$ and $E_2$ be Lagrangian subspaces of $\cx^n$ intersecting transversally at the origin. Then  the union $E_1 \cup E_2$ is polynomially convex.
\end{lemma}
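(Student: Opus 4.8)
The plan is to deduce Lemma~\ref{Eigen} from Weinstock's theorem by showing that for a pair of transversally intersecting Lagrangian subspaces, the alternative in Weinstock's dichotomy — the existence of an analytic annulus with one boundary on $E_1$ and the other on $E_2$ — cannot occur. First I would observe that Lagrangian subspaces of $\cx^n$ are in particular maximally totally real, so Weinstock's theorem applies directly and it suffices to rule out the annulus. The key point is a positivity (area) obstruction coming from the symplectic form: if $h: A(r,R) \to \cx^n$ is holomorphic and smooth up to the boundary, then $\int_{A(r,R)} h^*\omega \geq 0$, with strict inequality unless $h$ is constant, since for a holomorphic map the pullback $h^*\omega$ is a nonnegative multiple of the Euclidean area form (this is just the Wirtinger-type computation: $h^*\omega = h^*(\frac{i}{2}\sum dz_j\wedge d\bar z_j)$ is $|h'|^2$ times the area form on the annulus).

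Next I would use Stokes' theorem together with the fact that $\omega = d\lambda$ is exact, where $\lambda = \sum_j x_j\, dy_j$ (or any primitive), to convert the area integral into a boundary integral:
\begin{equation*}
\int_{A(r,R)} h^*\omega = \int_{h(C(R))} \lambda - \int_{h(C(r))} \lambda.
\end{equation*}
Since $E_1$ and $E_2$ are linear Lagrangian subspaces, the restriction of $\omega$ to each of them vanishes, and moreover $\lambda|_{E_j}$ is a \emph{closed} $1$-form on $E_j$ (because $d(\lambda|_{E_j}) = \omega|_{E_j} = 0$); as $E_j$ is a linear subspace, hence simply connected, $\lambda|_{E_j}$ is exact, say $\lambda|_{E_j} = dg_j$ for a function $g_j$ on $E_j$. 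Then each boundary integral $\int_{h(C(t))}\lambda$ is an integral of an exact form over a closed loop, hence vanishes. (Alternatively, since everything is linear one can take $\lambda$ itself to be a linear $1$-form adapted so that it restricts to zero on a given Lagrangian, but handling two subspaces simultaneously is cleanest via the exactness-on-each-$E_j$ argument, or simply by noting $\int_{C(t)} h^*\lambda = \int_{C(t)} d(\text{pullback of }g_j) = 0$ since $h(C(t))\subset E_j$.) Therefore $\int_{A(r,R)} h^*\omega = 0$, which forces $h$ to be constant — contradicting that $h(C(r))$ and $h(C(R))$ lie in the two distinct subspaces $E_1$ and $E_2$ meeting only at the origin, so the image would have to be a single point lying in $E_1 \cap E_2 = \{0\}$, i.e. $h \equiv 0$, but a constant map has no annular boundary in the required configuration. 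Hence the annulus alternative is impossible and $E_1 \cup E_2$ is polynomially convex.

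The main obstacle, and the step requiring the most care, is the justification that the boundary integrals $\int_{h(C(t))}\lambda$ vanish: one must be careful about the choice of primitive $\lambda$ and about regularity of $h$ up to the boundary so that Stokes' theorem applies on the annulus. The cleanest route is: fix any global primitive $\lambda$ of $\omega$ on $\cx^n$; on each linear Lagrangian $E_j$ the pullback $\lambda|_{E_j}$ is closed, hence (by simple connectivity) exact, $\lambda|_{E_j} = dg_j$; then $h^*\lambda|_{C(t)} = d(g_j\circ h)|_{C(t)}$ since $h(C(t))\subset E_j$, and the integral of an exact form over the closed curve $C(t)$ is zero. Combined with Stokes and the Wirtinger positivity, this yields $h$ constant. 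I would also remark that this is exactly the standard "no analytic annulus between Lagrangians" argument and is the local, linear prototype of the global non-existence phenomena for Lagrangian submanifolds used elsewhere in the paper.
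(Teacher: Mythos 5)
Your proof is correct and follows essentially the same route as the paper: apply Weinstock's dichotomy and rule out the analytic annulus by choosing a primitive $\lambda$ of $\omega$, using Stokes to reduce the (strictly positive, by the Wirtinger identity) area of the annulus to boundary integrals over loops in the Lagrangian planes, and noting these vanish because $\lambda|_{E_j}$ is closed and the loops are null-homotopic (equivalently, in your phrasing, $\lambda|_{E_j}$ is exact on the simply connected $E_j$). The only cosmetic difference is your sign convention in the Stokes formula, which does not affect the argument.
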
 

\begin{proof} It suffices to show that $E_1 \cup E_2$ does not contain the boundary of an analytic annulus. Arguing by contradiction, suppose that there exists an analytic annulus $h$ attached 
to $E_1 \cup E_2$. 
Let $\lambda$ be a 1-form on $\cx^2$ such that $d\lambda = \omega$. By Stokes' formula 
$$\int_{h(A(r,R))} \omega  = \int_{h(C(r))} \lambda + \int_{h(C(R))} \lambda.$$
The restriction $\lambda\vert_{E_1}$ is a closed form because $E_1$ is Lagrangian. Since the closed curve  $h(C(r))$, 
resp. $h(C(R))$, is null-homotopic  in $E_1$, resp. in  $E_2$,  we conclude that both integrals on the right vanish. 
Therefore, the integral on the left also vanishes. But since the map $h$ 
is holomorphic, this integral represents the area of $h(A(r,R))$ with respect to the usual Euclidean 
metric and so is different from zero. This contradiction shows that the union $E_1 \cup E_2$ does not contain
 a boundary of an analytic annulus. Thus $E_1 \cup E_2$ is polynomially convex by Weinstock's theorem.
\end{proof}
 
The next theorem generalizes Weinstock's result to submanifolds.  
\begin{theorem}\label{t.3}
\label{t:inter2}
Let $L_1$ and $L_2$ be smooth totally real submanifolds in $\cx^n$ intersecting transversally at the origin. 
Suppose that the union of their tangent spaces at the origin is locally polynomially convex near the origin. 
Then the union $(L_ 1 \cup L_2)$ is locally polynomially convex near $0$.
\end{theorem}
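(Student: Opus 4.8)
The plan is to reduce Theorem~\ref{t:inter2} to Weinstock's theorem by a localization argument, following the spirit of Lemma~\ref{Eigen} but with the linear approximation made quantitative near the origin. Set $E_j = T_0 L_j$, and recall that by hypothesis $E_1 \cup E_2$ is locally polynomially convex, so by Weinstock's dichotomy there is no analytic annulus $h: A(r,R) \to \cx^n$ smooth up to the boundary with $h(C(r)) \subset E_1$ and $h(C(R)) \subset E_2$. First I would recall the standard characterization (via Kallin's lemma or the theory of polynomial hulls of unions, as in Stout~\cite{St}) that local polynomial convexity of $L_1 \cup L_2$ near $0$ fails only if there is a sequence of analytic discs, or an analytic annulus, with boundary split between (small pieces of) $L_1$ and $L_2$ and passing near the origin; more precisely, I would argue by contradiction and extract from the failure of convexity a nonconstant bounded analytic annulus (or disc degenerating to an annulus) attached to $L_1 \cup L_2$ in arbitrarily small neighbourhoods of $0$.

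Next I would pass to the blow-up / rescaling limit. Suppose $L_1 \cup L_2$ is not locally polynomially convex near $0$; then for every $\e > 0$ the set $(L_1 \cup L_2) \cap \overline{\B_n(0,\e)}$ has nontrivial hull, and one can produce analytic annuli $h_\e$ attached to it. Rescaling coordinates by $1/\e$, the manifolds $L_j/\e$ converge in $C^1_{loc}$ to their tangent spaces $E_j$, and the rescaled annuli $h_\e/\e$ — after normalizing their size, using the a priori area/diameter bounds that come from the fact they live in a fixed ball — subconverge (Bishop-type compactness, or the Montel/normal families argument used in Alexander's approach, cf. Section~\ref{s.Adisc}) to a nonconstant analytic annulus attached to $E_1 \cup E_2$. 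This contradicts Weinstock's theorem and the hypothesis, so $L_1 \cup L_2$ must be locally polynomially convex near $0$. The transversality of $L_1$ and $L_2$ at $0$ is used to guarantee that the two boundary pieces of each $h_\e$ stay genuinely separated and that the limiting object is an honest annulus rather than a disc with boundary on a single $E_j$.

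An alternative, and perhaps cleaner, route avoids rescaling: one shows directly that if $E_1\cup E_2$ is polynomially convex then a neighbourhood basis of $0$ in $\cx^n$ can be exhausted by sublevel sets of a plurisubharmonic function $\rho$ that is strictly plurisubharmonic and is a local defining-type function for which $L_1 \cup L_2$ sits in a totally real fashion; combining this with a Stokes argument as in Lemma~\ref{Eigen} (the key point being that $\lambda|_{L_j}$ is closed because each $L_j$ is Lagrangian — wait, here $L_j$ are only totally real, so this particular trick is unavailable) — so in the purely totally real case one really does need the convexity hypothesis on the tangent spaces and the compactness argument above rather than the area identity. I would therefore commit to the rescaling argument as the main line.

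The hard part will be the compactness step: producing, from the hypothetical failure of local polynomial convexity, a sequence of analytic annuli with uniformly controlled geometry whose rescaled limit is nonconstant. The danger is degeneration — the rescaled annuli could collapse to a point or to a slit, or the modulus $R/r$ could blow up or degenerate to $1$. Controlling this requires the a priori bounds supplied by Alexander's machinery (uniform area bounds for curves in a fixed ball with boundary on a totally real set, plus a lower bound on the "size" forced by the nontriviality of the hull), together with the transversality of $E_1, E_2$ to rule out the limit annulus shrinking its boundary onto the intersection line $E_1 \cap E_2 = \{0\}$. Once that is in hand, invoking Weinstock's theorem for the linear model $E_1 \cup E_2$ closes the argument. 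I expect the localization and compactness to mirror, and perhaps directly reuse, the techniques developed in Section~\ref{s.Adisc} for Theorem~\ref{InterTheo}, so the proof of Theorem~\ref{t:inter2} should be short modulo those tools.
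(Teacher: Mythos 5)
Your proposal diverges fundamentally from the paper's proof, and its first reduction contains a genuine gap. You claim that failure of local polynomial convexity of $L_1\cup L_2$ near $0$ would yield a sequence of analytic annuli (or degenerating discs) with boundary split between the two manifolds, to which one could then apply a rescaling argument. But Weinstock's dichotomy (polynomial convexity or an attached analytic annulus) is a special feature of the union of two transverse maximally totally real \emph{linear} subspaces; it is not available for a union of two nonlinear totally real submanifolds. In general the polynomially convex hull of a compact set need not contain any analytic structure at all (Stolzenberg's example), so "the hull is nontrivial" does not produce the analytic annuli your blow-up argument needs. Without that input, the rescaling and compactness machinery of Section~\ref{s.Adisc} has nothing to act on; and even if some analytic object were present, you have no a priori area bound, no control on the conformal modulus of the annuli, and no mechanism preventing the rescaled limit from being constant --- you flag these dangers yourself but do not resolve them. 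Your second, "alternative" route is abandoned mid-sentence for exactly the right reason (the Stokes/Lagrangian trick of Lemma~\ref{Eigen} is unavailable for merely totally real $L_j$), so no complete argument is offered.

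The paper's proof goes the opposite way: it is a direct construction via Kallin's lemma rather than a contradiction via analytic annuli. One normalizes $E_1=\rl^n_x$ and writes $E_2$ as the graph of a real matrix $A$; the hypothesis, through Weinstock's \emph{criterion} (no purely imaginary eigenvalues of $A$ of modulus greater than one), is then used quantitatively. Putting $A$ in Jordan normal form, one builds for each Jordan block an explicit quadratic polynomial $p_j$ with $\Im p_j|_{L_1}\leq -C\parallel x\parallel^2$ and $\Im p_j|_{L_2}\geq C\parallel y\parallel^2$ modulo cubic errors; summing over the blocks separates $p(L_1)$ and $p(L_2)$ into opposite open half-planes meeting only at $0$, and Kallin's lemma concludes. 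If you want to salvage your approach, you would first have to prove an analytic-structure theorem for hulls of unions of totally real submanifolds, which is a substantially harder (and open) problem; the separating-polynomial construction is the workable route here.
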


Gorai \cite{Go} proved this for $n=2$. Also note that Theorem \ref{InterTheo} is an 
immediate consequence of Theorem~\ref{t:inter2} and Lemma~\ref{Eigen}.

\begin{proof}[Proof of Theorem~\ref{t.3}.] Let $E_j = T_0 L_j$, $j=1, 2$. After a complex linear change
of coordinates we may assume that $E_1 = \rl^n_x$, $x=(x_1, \dots, x_n)$, $z_j=x_j+i y_j$.
Then $E_2$ can be expressed as the graph of a real valued linear map $A: \rl^n_y \to \rl^n_x$,
so the points $z\in E_2$ can be given by $z=(A+iI_n)y$, where $I_n$ is the identity $n\times n$
matrix. Weinstock \cite{Wein} proved that {\it  the union  $E_1 \cup E_2$ is locally polynomially
convex near the origin if and only if $A$ does not have purely imaginary eigenvalues of absolute 
value bigger than one.}

By Lemma \ref{Eigen} the above condition on eigenvalues of $A$ holds if $E_j$ are Lagrangian spaces. 
As in \cite{Wein}, our argument is based on Kallin's lemma (see, for instance, \cite{St}). For readers's 
convenience we recall its statement:

\medskip

{\it Let $X_1$ and $X_2$ be compact, polynomially convex subsets of $\cx^n$. Let $p$ be a polynomial such that 
the set $p^{-1}(0) \cap (X_1 \cup X_2)$ is polynomially convex. Assume that the polynomially convex sets 
$\widehat{p(X_j)}$, $j=1, 2$, of $\cx$ meet only at the origin which is a boundary point for each of them. Then the set 
$X_1 \cup X_2$ is polynomially convex.}

\medskip

We will deal with the special case where $p(X_1)$ and $p(X_2)$ are contained in 
$\{ \xi + i\eta \in \cx \,\vert\, \eta < 0 \} \cup \{ 0 \}$ and $\{ \xi + i\eta \in \cx \,\vert\, \eta > 0 \} \cup \{ 0 \}$  respectively. Note that this implies that
the sets $\widehat{p(X_j)}$ also satisfy this property.

A complex linear change of coordinates of $\cx^n$ defined by $z \mapsto Bz$, where $B$ is a real 
$n \times n$-matrix, will transform $E_2$ into the linear space $z=(BAB^{-1} + iI_n)y$, and so we may assume 
without loss of generality that the matrix $A$ is in the Jordan normal form:
\begin{equation}\label{e.A}
A = \left(\begin{array}{ccccc} 
A_1 & 0 & 0 & \dots & 0\\ 
0 & A_2 & 0 & \dots & 0\\
& & \dots & & \\
0 & 0 & 0 & \dots & A_m
\end{array}\right),
\end{equation}
where each $A_j$ is either a Jordan block of the form 
\begin{equation}\label{e.Ajreal}
A_j =\left(\begin{array}{ccccc} 
\lambda_j & 1 & 0 & \dots & 0\\ 
0 & \lambda_j & 1 & \dots & 0\\
& & \dots & & \\
0 & 0 & 0 & \dots & \lambda_j
\end{array}\right),
\end{equation}
for a real eigenvalue $\lambda_j$ of $A$, or of the form
\begin{eqnarray}\label{block2}
A_j=\left(\begin{array}{ccccc}
C_j & I_2 & 0 & \dots &0\\
0 & C_j & I_2 & 0 & \dots \\
0 & 0 & C_j & I_2 & \dots \\
& & \dots & & \\
0 & 0 & 0 & 0 & C_j \\
\end{array}\right), \ \ 
C_j=\left(\begin{array}{cc} 
s_j & -t_j\\ t_j & s_j
\end{array}\right)
\end{eqnarray}
for a complex eigenvalue $s_j + i t_j$.

Applying  the implicit function theorem to the defining equations of $L_1$, we conclude that for a small 
polydisc $U \subset \cx^n$,
\begin{equation}\label{e.s1}
L_1 = \left\{z \in U : z=x+i\phi(x) \right\},
\end{equation}
where $\phi: \rl^n \to \rl^n$ is a smooth map with $\phi(0)=0$, and $\frac{\partial \phi_j}{\partial x_k}(0)=0$
for $j,k = 1,\dots, n$. Similarly, 
\begin{equation}\label{e.s2}
L_2 = \left\{z \in U : z=(Ay+\psi(y))+iy \right\},
\end{equation}
with $\psi(0)= \frac{\partial \psi_j}{\partial y_k}(0)=0$, $j,k = 1,\dots, n$. When $A$ is in the Jordan normal
form, then the transformation $x=Ay$ can be split into $m$ transformations of the form 
$x_{\mu_j} = A_j \, y_{\mu_j}$, where $x_{\mu_j}$ and $y_{\mu_j}$ are the appropriate subvectors of $x$ and $y$
of size equal to the dimension of the block $A_j$. Because of this decomposition, we may construct polynomials
$p_j(z_{\mu_j})$ corresponding to each block $A_j$, and then combine the results together to obtain a polynomial 
$p(z)$ for $A$ that will satisfy the assumptions of Kallin's lemma. We consider several cases depending on the shape of 
$A_j$. In what follows $C$ denotes a positive constant which may change from line to line.

\bigskip

{\it Case 1.} Let $A_j=(\lambda_j)$. Denote by $z_j = x_j +iy_j$ the corresponding variable. Consider the polynomial
$$p_j(z) = (\lambda_j - i)z_j^2.$$
Then 
$$\Im p_j \vert_{L_1} = -x_j^2 + O(\parallel x \parallel^3),$$
while
$$ \Im p_j \vert_{L_2}= (\lambda_j^2 + 1) y_j^2 + O(\parallel y \parallel^3).$$

\bigskip

{\it Case 2.} Suppose now that
$$
A_j = \left(\begin{array}{cc} 
s_j & -t_j\\ t_j & s_j
\end{array}\right) .
$$
Let $z_j$, $z_{j+1}$ be the variables corresponding to this block. Set 
$$p_j(z) = (s-\delta i)(z_{j}^2 + z_{j+1}^2),\ \ \delta>0 .$$
Then 
$$\Im p_j \vert_{L_1} = -\delta (x_j^2 + x_{j+1}^2) + O(\parallel x \parallel^3),$$
and 
$$\Im p_j \vert_{L_2} = (2s_j^2 - \delta (s_j^2 + t_j^2 - 1))(y_j^2 + y_{j+1}^2) + O(\parallel y \parallel^3).$$
If $s_j \ne 0$, then by choosing $\delta> 0$ small enough we ensure that $\Im p_j \vert_{L_2} \ge C(x_j^2+x_{j+1}^2)$.
If $s_j=0$, then $\vert t_j \vert < 1$ and the same estimate holds with $\delta = 1$.

\bigskip

{\it Case 3.} Suppose that $A_j$ is a Jordan block of size $k$ as in \eqref{e.Ajreal}.
Without loss of generality assume that the block $A_j$ corresponds to the first $k$ coordinates,
i.e., to $z_1,...,z_k$. In what follows we use the convention that $x_{l}=y_{l}=0$ for $l>k$ to 
simplify the formulas with summation. Consider  the polynomial 
$$p_j(z)= \sum_{l=1}^k (\a_l - \delta i) z_l^2,$$
where $\a_l$ and $\delta$  will be suitably chosen positive constants. For any $z\in L_1$ we have 
\begin{eqnarray}
\label{norm1}
\Im p(z) =  - \delta \sum_l x_l^2 + O(\parallel x \parallel^3) .
\end{eqnarray}
Suppose now that $z = (A+iI_n)\,y \in E_2$. Then 
$$
\Im p(z) = \lambda_j \left(\alpha_1 y_1^2+ \alpha_k y_k^2\right)+ \lambda_j
\sum_{l=1}^{k-1} \left(\alpha_l y_l^2+\frac{2 \alpha_l}{\lambda_j} y_l y_{l+1} + 
\alpha_{l+1}y_{l+1}^2 \right) + q_\delta(y) + O(\parallel y \parallel^3). 
$$
Here and below $q_\delta(y)$ denotes a quadratic form in $y_1,\dots,y_k$ of the norm smaller than $\delta$. 
Since a quadratic polynomial $ax^2+bxy+cy^2$ is nonnegative (resp. nonpositive) whenever $a > 0$ 
(resp. $a < 0$) and $b^2 \le 4ac$, we may choose $\alpha_l > 0$  inductively, starting with $\alpha_1=1$, 
so that every term in the sum above is nonnegative. Further, a choice of $\delta$ small enough will ensure
that 
\begin{equation}\label{e.normy}
\Im p(z) \ge  C \sum_{j=1}^k y_j^2 .
\end{equation}

\bigskip

{\it Case 4.} Consider the Jordan block of size $k$ given by~\eqref{block2} corresponding to a complex eigenvalue 
$s_j+it_j$ of $A$. As it was mentioned previously, we always have $|t_j|<1$ if $s_j=0$. This follows from Weinstock's 
criterion of polynomial convexity for the union of two totally real spaces which holds in the Lagrangian case.
Consider the polynomial 
\begin{equation}\label{lastp}
p_j(z) = (s_j-\delta i)(\a_1 z_1^2 + \dots + \a_k z_k^2),
\end{equation}
where $\a_l$ and $\delta$ are some positive constants. Then 
$$
\Im p_j \vert_{L_1} = -\delta (\a_1 x_1^2 + ...+\a_k x_k^2) + O(\parallel x \parallel^3).
$$

As for $L_2$, first note that $k$ is necessarily even, and we set $\a_{2l-1} = \a_{2l}$, $l=1,2,\dots,k/2$.
If $s_j \ne 0$, then for $z\in L_2$ we have
$$
\Im p_j(z) = 2s_j^2\left(\sum_{l=1}^{k} \a_{l}( y_l^2 +  \a_l\, y_l\, y_{l+2})\right) + 
q_\delta(y) + O(\parallel y \parallel^3) ,
$$
where $q_\delta$ has the same properties as above. As in Case 3, we may choose coefficients $\a_l$ inductively
so that the first term on the right-hand side above is positive-definite, and further we may choose $\delta >0$ 
small enough so that
\begin{equation}\label{e.y2}
\Im p_j\vert_{L_2} \geq C (y_j^2 + \dots + y_k^2) .
\end{equation}
If $s_j=0$, then
\begin{eqnarray*}
\Im p_j \vert_{L_2} = \delta \left( \a_1(1-t_j)^2(y_1^2+y_2^2) + 
\sum_{l=2}^{k/2} (\a_{2l-1}(1-t_j^2)-\a_{2l-3}) (y_{2l-1}^2+y_{2l}^2) + \right. \\
\left. 2t \sum_{l=1}^{k/2} \a_{2l-1}(y_{2l-1}y_{2l+2} - y_{2l}y_{2l+1}) \right) .
\end{eqnarray*}
Again, coefficients $\a_l$ can be chosen in such a way that  the required estimate~\eqref{e.y2} holds.
\bigskip

Now to combine all cases together, consider
$$p(z) = \sum_{j=1}^m p_j(z) ,$$
where $p_j$ are the polynomials constructed above for each Jordan block of $A$. Then 
$$
\Im p \vert_{L_1} \leq - C \parallel x \parallel^2, {\rm \  and \ } \Im p \vert_{L_2} \geq C \parallel y \parallel^2.
$$
Note that these estimates are possible precisely because in $p(z)$ constructed above 
{\it all} quadratic terms $z_\nu^2$, $\nu=1, 2, \dots, n$ are present.
Hence $p(L_1)\subset \{ \xi + i\eta \in \cx \,\vert\, \eta < 0 \} \cup \{ 0 \}$ and
$p(L_2) \subset \{ \xi + i\eta \in \cx \,\vert\, \eta < 0 \} \cup \{ 0 \}$. Furthermore, 
$p^{-1}(0) \cap (L_1 \cup L_2) = \{ 0 \}$, and Kallin's lemma concludes the proof. 
\end{proof}

Transversality of the intersection is not a necessary condition for local polynomial convexity of the union 
$L_1 \cup L_2$ of two Lagrangian submanifolds.

\begin{ex}\label{Ex1}
Consider in $\cx^2$ with the coordinates $z = x+iy$, $w=u+iv$ the Lagrangian submanifolds 
$L_1 = \rl^2_{(x,u)}$ and $L_2 = \{ (z,w): (x+ix^3,u+iu^3), (x,u) \in \rl^2 \}$.
Then $L_1 \cap L_2 = \{ 0 \}$, the manifolds $L_1$ and $L_2$ are tangent at the origin and  their union 
$L_1 \cup L_2$ is locally polynomially convex at the origin.
Indeed, consider the polynomial $p(z,w) = z^2 + w^2$. Then 
$p(L_1) = \{ \zeta \in \cx \,\vert\, \Re \zeta \ge 0, \Im \zeta = 0\}$, i.e., the positive real semi-axis. 
The image $p(L_2)$ is contained in the set $\{ \zeta \in \cx \,\vert\, \Im \zeta > 0 \} \cup \{ 0 \}$. 
Therefore, the polynomially convex hulls of these images intersect only at the origin which is a boundary 
point for both of them. It is easy to see that  $p^{-1}(0) \cap (L_1 \cup L_2) = \{ 0 \}$.
Hence, by Kallin's lemma $L_1 \cup L_2$ is locally polynomially convex near the origin.
\end{ex}

\section{Boundary behaviour of analytic discs near singular real manifolds}\label{s.5}

In this section we establish continuity up to the boundary of certain holomorphic discs, which is 
needed for the proof of Theorem~\ref{continuity10}, and discuss related results. From  Theorem~\ref{InterTheo} 
and Theorem~\ref{t:ga}  we obtain immediately

\begin{prop}\label{LocalGlobal}
Let $L$ be a smooth compact Lagrangian surface in $\cx^2$ with a finite number of transverse double self-intersections and generic open Whitney umbrellas. Then $L$ is locally polynomially convex.
\end{prop}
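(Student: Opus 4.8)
The plan is to reduce the statement to the three local models that occur on a generic Lagrangian inclusion. Local polynomial convexity is a condition to be checked near each point of $L$ separately, so it suffices to fix $p\in L$ and show that $L\cap\overline{\B_2(p,\e)}$ is polynomially convex for all small $\e>0$. Under the hypotheses of the Proposition, $p$ is of exactly one of three types: (a) a point near which $L$ is a smooth embedded Lagrangian submanifold; (b) one of the finitely many transverse double self-intersection points; (c) one of the finitely many generic open Whitney umbrella points. I would treat these three cases in turn.

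In case (a) the germ of $L$ at $p$ is a smooth totally real submanifold of $\cx^2$: a Lagrangian subspace $E\subset\cx^2$ satisfies $E\cap iE=\{0\}$, since $v=iw$ with $v,w\in E$ forces $0=\omega(v,w)=\omega(iw,w)=-|w|^2$, hence $w=0$. One then invokes the classical fact that a $C^1$ totally real submanifold is locally polynomially convex near each of its points (see, e.g., \cite{St}), which gives the claim for small $\e$. In case (b), inside a small ball about $p$ the set $L$ is the union $L_1\cup L_2$ of its two local branches, each a smooth embedded Lagrangian submanifold, and by hypothesis the intersection at $p$ is transverse; Theorem~\ref{InterTheo} then yields local polynomial convexity of $L_1\cup L_2$ near $p$. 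In case (c) there is nothing further to prove: by the definition adopted right after the proof of Theorem~\ref{t:ga}, a generic open Whitney umbrella point is precisely one near which the conclusion of Theorem~\ref{t:ga} holds, i.e.\ near which $L$ is locally polynomially convex.

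Since these cases exhaust $L$, the Proposition follows. I do not expect a genuine obstacle: the whole content is carried by Theorems~\ref{t:ga} and~\ref{InterTheo} together with the fact that the notion in question is local. The only ingredient drawn from outside the excerpt is the classical local polynomial convexity of smooth totally real submanifolds used in case (a); the only point requiring a moment's care is to record that, for a Lagrangian inclusion with a finite set of transverse double points and a finite set of open umbrellas, the images of these singular points may be taken disjoint and every remaining point of $L$ carries an embedded Lagrangian germ, so that cases (a)--(c) are indeed mutually exclusive and exhaustive.
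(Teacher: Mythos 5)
Your proposal is correct and follows exactly the route the paper takes: the paper derives Proposition~\ref{LocalGlobal} immediately from Theorems~\ref{t:ga} and~\ref{InterTheo}, with the smooth-point case handled (implicitly) by the classical local polynomial convexity of totally real submanifolds. Your write-up merely makes the three-case decomposition explicit.
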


Let $K$ be a compact subset of $\cx^n$. There are several ways to define  an analytic disc with the boundary on $K$. The 
simplest one is to assume that the map $f$ is continuous  on the closed disc ${\overline\D}$ and $f(\partial\D)$ is contained 
in~$K$. In this case the image $X:= f(\D)$ is a complex purely 1-dimensional set in $\cx^n$ (i.e., a complex curve) with 
the boundary  $\partial X:= \overline X \setminus X$ contained in $K$. 

One can weaken the assumption of boundary continuity. Let $\gamma$ be a nonempty subset of $\partial\D$. By the 
{\it cluster set} $C(f,\gamma)$ of an analytic disc $f$ on $\gamma$ we mean the set of the (partial) limits of the sequences 
$(f(\zeta_k))$ for all sequences $(\zeta_k)$ in $\D$ converging to $\gamma$, i.e., such that 
$dist(\zeta_k,\gamma) \longrightarrow 0$. We recall a well -known (and easy to prove, see~\cite{CoLo}) fact that $C(f,\partial\D)$ is connected. If the cluster set $C(f,\partial \D)$ is contained in the compact set $K$, then $f:\D \setminus f^{-1}(K) \longrightarrow \cx^n \setminus K$ is a proper holomorphic map. Therefore, the image $X = f(\D)$ is still 
a complex curve with the boundary $\partial X = \overline X \setminus X$ contained in $K$ although the restriction 
$f\vert_{\partial\D}$ is not defined. The following theorem gives a sufficient condition for the equivalence of these two notions of the boundary of an analytic disc.

\begin{prop}\label{t:cont1}
Let $E$ be a compact subset of $\cx^n$ and $f$ be an analytic disc with $C(f,\partial\D) \subset E$. Suppose that $E$  is a smooth totally real submanifold in a neighbourhood of every point of $C(f,\partial\D)$ except possibly a finite subset $Sing(E) = \{ p_1,...,p_m\}$. Suppose further that $E$ is locally polynomially convex near  every point $p_j \in Sing(E)$.  Then  $f$ 
extends continuously to $\overline\D$.
\end{prop}

The case when the set $Sing(E)$ is empty is due to Chirka \cite{Ch}. In the proof of this theorem we employ the following result essentially due to  Forstneri\v c and Stout~\cite{FoSt}:

\begin{lemma}\label{l.fs}
In the assumptions of Proposition~\ref{t:cont1}, there exists a neighbourhood 
$\Omega$ of $C(f,\partial\D)$ in $\cx^n$ and a continuous non-negative plurisubharmonic function $\rho$  on $\Omega$ 
such that $E \cap \Omega = \{ p \in \Omega: \rho(p) = 0 \}$. Furthermore, for every $\delta > 0$ one can 
choose  $\rho$  such that it is a smooth strictly plurisubharmonic function on $\Omega \setminus \cup_{j=1}^m \B(p_j,\delta)$.
\end{lemma}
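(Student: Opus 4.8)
\textbf{Proof proposal for Lemma~\ref{l.fs}.}
The plan is to build $\rho$ by patching together local models near the two kinds of points of $C(f,\partial\D)$: the totally real points, where one has the standard ``squared distance to a totally real manifold'' construction, and the finitely many exceptional points $p_j$, where local polynomial convexity is the only hypothesis available. First I would recall the Forstneri\v c--Stout mechanism: if $E$ is totally real near a point $q$, then in a small neighbourhood $U_q$ there is a smooth defining function $\rho_q$ for $E\cap U_q$ that is strictly plurisubharmonic; indeed one may take $\rho_q = \dist(\cdot,E)^2$ with respect to a suitable metric, or use that $E$ locally sits in the zero set of a strictly plurisubharmonic function after a biholomorphic change of coordinates (totally real manifolds admit Stein neighbourhood bases with such exhaustion functions). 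This gives the required function away from the singular set.

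Next I would treat a neighbourhood of each $p_j$. Since $E$ is locally polynomially convex near $p_j$, the intersection $E\cap\overline{\B(p_j,\e_j)}$ is polynomially convex for small $\e_j$; by the standard characterization of polynomial convexity in terms of plurisubharmonic functions (Oka's theorem / the fact that a compact polynomially convex set has a basis of neighbourhoods that are sublevel sets of nonnegative continuous plurisubharmonic functions vanishing exactly on the set), there is a continuous nonnegative plurisubharmonic function $\rho_j$ on a neighbourhood $\Omega_j$ of $p_j$ with $E\cap\Omega_j = \{\rho_j = 0\}$. Here one does \emph{not} expect $\rho_j$ to be smooth or strictly plurisubharmonic — that is exactly why the lemma only claims smoothness and strict plurisubharmonicity off the balls $\B(p_j,\delta)$.

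The core step is then to glue. I would fix $\delta>0$, cover the compact set $C(f,\partial\D)$ by finitely many of the neighbourhoods $U_q$ (for totally real $q$) together with the $\Omega_j$, shrinking so that each $\Omega_j\subset\B(p_j,\delta)$ and so that no $U_q$ meets any $\B(p_j,\delta/2)$. On the overlaps of the ``good'' charts $U_q$ one combines the strictly plurisubharmonic $\rho_q$ by a partition of unity; a standard argument (adding a large multiple of $|z|^2$ locally, or using that a convex combination of strictly psh functions with controlled Hessians stays strictly psh after a small perturbation) produces a single smooth strictly plurisubharmonic $\rho_{\mathrm{good}}$ defined on a neighbourhood of the totally real part, with zero set exactly $E$ there. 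Finally one merges $\rho_{\mathrm{good}}$ with the $\rho_j$: take $\rho = \max$ of suitably rescaled versions, or glue with cutoff functions supported where both are defined and positive, after multiplying the $\rho_j$ by small constants so that near $\partial\B(p_j,\delta)$ the function $\rho_{\mathrm{good}}$ dominates. Because the maximum of plurisubharmonic functions is plurisubharmonic and the construction is local, the patched $\rho$ is continuous, nonnegative, plurisubharmonic on a neighbourhood $\Omega$ of $C(f,\partial\D)$, vanishes exactly on $E\cap\Omega$, and is smooth and strictly plurisubharmonic on $\Omega\setminus\bigcup_j\B(p_j,\delta)$.

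The main obstacle I anticipate is the gluing on the transition region between a $\rho_j$ (merely continuous plurisubharmonic) and the strictly plurisubharmonic $\rho_{\mathrm{good}}$: one must arrange the matching so that strict plurisubharmonicity is genuinely preserved outside $\B(p_j,\delta)$ while only continuity is demanded inside, and so that no spurious zeros of $\rho$ are created where $E$ is absent. I expect this to be handled by the standard regularized-maximum trick together with a careful choice of the scaling constants multiplying the $\rho_j$, exactly as in the Forstneri\v c--Stout argument; the totally real part is routine and the existence of the $\rho_j$ is immediate from polynomial convexity, so essentially all the work is in this patching bookkeeping.
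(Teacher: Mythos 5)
Your two local ingredients are exactly the right ones, and they match the paper's: the squared distance $\rho_1=\dist(\cdot,E)^2$ is strictly plurisubharmonic near the totally real part, and Stout's theorem (in fact \cite[Thm 1.3.8]{St}, which gives a \emph{smooth}, globally defined, nonnegative plurisubharmonic $\phi$ with zero locus exactly the polynomially convex compact $E\cap\e\overline\B$) supplies the local model at each $p_j$. You also correctly identify the seam between the two regimes as the whole difficulty. But the gluing mechanisms you invoke do not close that gap, and this is where your argument genuinely breaks. A maximum $\max(\rho_{\mathrm{good}}, c_j\rho_j)$ of two functions that \emph{both vanish on $E$ across the seam} cannot be forced to equal one of them near each edge of the overlap: neither function strictly dominates the other in any full neighbourhood of a point of $E$ on the seam, so the max is neither smooth nor strictly plurisubharmonic there, while the regularized maximum is strictly positive at common zeros and so destroys the zero set on $E$. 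Likewise, a partition-of-unity combination $\chi\rho_{\mathrm{good}}+(1-\chi)c_j\rho_j$ is not plurisubharmonic unless the difference of the two functions is $C^2$-small on $\supp d\chi$, which no choice of the constants $c_j$ guarantees. So "careful choice of scaling constants" plus "standard regularized-maximum trick" is not bookkeeping; it is the missing idea.

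The paper's resolution is different and worth noting. One keeps the single global function $\rho_1$ and, near each $p_j$, subtracts a $C^2$-small bump $\psi\le 0$ supported in $\e_1\B$ with $\psi<0$ near $E\cap\e_2\overline\B$, so that $\rho_2=\rho_1+\psi$ is still strictly plurisubharmonic on the annular region $E\cap(\e\overline\B\setminus\e_2\B)$ but is \emph{negative} on a neighbourhood of $E\cap\e_2\overline\B$. Taking $\max(\rho_2,0)$ — a maximum with the constant $0$, not with another function vanishing on $E$ — produces a plurisubharmonic function that is identically zero near the deep singular region and hence extends by zero; its zero set is now too large, and this is repaired by adding $t\phi$ with $\phi$ the Stout function and $t>0$ small. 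The final patching with $\rho_1$ via a cutoff $\chi$ is legitimate precisely because on the overlap the two functions being combined differ by $(1-\chi)t\phi$, a term whose $C^2$ norm is as small as one likes, so strict plurisubharmonicity of $\rho_1$ absorbs it. Your outline would become a proof if you replaced your max/partition-of-unity seam by this truncate-at-zero-then-add-$t\phi$ maneuver.
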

Forstneri\v c and Stout stated this result for a totally real disc with a finite number of hyperbolic points in $\cx^2$ 
but one can adapt their proof to the general situation with minor changes since it uses only local polynomial convexity 
near a hyperbolic point. For reader's convenience we provide the details.

\begin{proof}[Proof of Lemma~\ref{l.fs}.] Consider the function $\rho_1(z) = ({dist}(z,E))^2$, where ${dist}$ 
denotes the Euclidean distance. This function is Lipschitz continuous on a neighbourhood $\Omega_1$ of $E$.  We will construct a suitable  modification of $\rho_1$ near every point $p_j \in Sing(E)$ making it plurisubharmonic. Since the construction is local, we fix a point $p_j$ and assume that $p_j = 0$.

By assumption, for every  real $\e> 0$ small enough  the intersection $E \cap \e\overline\B$ is polynomially convex.  
Given $\delta>0$, fix numbers $\e$, $\e_1$ and $\e_2$ such that $0 < \e_2 < \e_1 < \e < \delta/2$. Since $E':= E \setminus Sing(E)$ is totally real near every point, there exists an open neighbourhood $\Omega_2 \subset \Omega_1$ of  $E'$ such that $\rho_1$ is a smooth strictly plurisubharmonic function on $\Omega_2$.

Consider  a smooth function $\psi: \cx^n \longrightarrow (-\infty,0]$ with the following properties: 
\begin{itemize}
\item[(a)] the support of $\psi$ is contained in $\e_1\B$,
\item[(b)] $\psi < 0$ in a neighbourhood of $E \cap \e_2\overline{\B}$,
\item[(c)] the $C^2$ norm of $\psi$ is small enough so that the function $\rho_2:=\rho_1 + \psi$ is strictly plurisubharmonic in a neighbourhood $\Omega_3$ of $E \cap (\e\overline\B \setminus \e_2\B)$.
\end{itemize}

Consider the function $\rho_3:= \max (\rho_2,0)$. It is nonnegative, continuous and plurisubharmonic on $\Omega_3$. Furthermore, $\rho_3$ vanishes in a neighbourhood of $E \cap (\e_2\partial\B)$. Therefore, one can extend the restriction of 
$\rho_3$ to $\Omega_3$ as a plurisubharmonic function on a neighbourhood $\Omega_4$ of $E \cap \e\overline{\B}$ by
setting it  to zero in a neighbourhood of $E \cap (\e\overline\B \setminus \e_2\B)$. We again denote this extended function 
by $\rho_3$. 

It follows from polynomial convexity of $E$ near the origin (see \cite[Thm 1.3.8, p. 25]{St}) that there exists a 
smooth nonnegative plurisubharmonic function $\phi$ on $\cx^n$ such that its zero locus coincides with $E \cap \e\B$. The function $\rho_4^t := \rho_3 + t\phi$ is nonnegative plurisubharmonic on $\Omega_4$ for every $t> 0$. It is easy to see that its zero locus coincides with $E \cap \e\overline\B$ for every value of the parameter $t> 0$. Since $\rho_3 = \rho_1$ on the set $\Omega_4 \cap (\e\overline\B \setminus \e_1\B)$, the equality  $\rho_4^t = \rho_1 + t\phi$ holds there. Hence $\rho_4^t$ is smooth and strictly plurisubharmonic on  $\Omega_4 \cap (\e\overline\B \setminus \e_1\B)$ for every $t > 0$.

Now fix a smooth function $\chi:\cx^n \longrightarrow [0,1]$ equal to zero on $\e_1\overline\B$ and to 1 outside $\e\B$. In order to patch $\rho_1$ and $\rho_4$, we set
$$
\rho^t = \chi\rho_1 + (1-\chi)\rho_4^t .
$$
On the set  $\Omega_4 \cap (\e\overline\B \setminus \e_1\B)$ where the patching occurs we have 
$\rho^t = \rho_1 + (1-\chi)t\phi$. Since $\rho_1$ is strictly plurisubharmonic on this set, we can fix $t> 0$ sufficiently small such that $\rho^t$ is strictly plurisubharmonic on  $\Omega_4 \cap (\e\overline\B \setminus \e_1\B)$. Dropping the upper 
index $t$, we obtain a function $\rho$ which is the required modification of $\rho_1$ near $p_j$. By repeating this argument near every point $p_j \in Sing(E)$, we conclude the proof of the lemma.
\end{proof}

The second ingredient is the following result \cite[Cor. 1.2]{ChCoSu}:

\begin{theorem}
\label{CCS}
Let $\Omega$ be a domain in $\cx^n$, $\rho$ a plurisubharmonic function in $\Omega$ with the zero set $X = \rho^{-1}(0)$, and $f:\D \longrightarrow \Omega^+ = \{ \rho \ge 0 \}$ a bounded analytic disc such that the cluster set $C(f,\gamma)$ on an open arc $\gamma \subset \partial \D$ is contained in $X$. Assume that for a certain point $\zeta \in \gamma$ the cluster set 
$C(f,\zeta)$ contains a point $p \in X$ such that, for some $\e > 0$, the function $\rho(z) - \e\parallel z \parallel^2$ is plurisubharmonic in a neighbourhood of $p$. Then $f$ extends to a H\"older $1/2$-continuous mapping in a neighbourhood of $\zeta$ on $\D \cup \gamma$.
\end{theorem}

\begin{proof}[Proof of Proposition~\ref{t:cont1}.] 
Fix $\zeta \in \partial\D$. Consider first the case when the cluster set $C(f,\zeta)$  contains at least one  point $p \in E \setminus Sing(E)$. 
Let $\rho$ be the plurisubharmonic function given by Lemma~\ref{l.fs}. 
Shrinking the balls $\B(p_j,\delta)$, one can assume that $\rho$ is strictly plurisubharmonic near $p$.  Then $f$ extends continuously to $\zeta$ by Theorem \ref{CCS}. Now suppose that $C(f,\zeta)$ contains only  points of $Sing(E)$. The cluster set $C(f,\zeta)$ is connected \cite{CoLo}, and so it can contain only one singular point $p_j \in Sing(E)$ which means that the analytic disc $f$ extends continuously to the point $\zeta$.
\end{proof}

\begin{cor}\label{LocalGlobalCor}
Let $L$ be a smooth compact Lagrangian surface in $\cx^2$ with a finite number of transverse double self-intersections and generic open Whitney umbrellas. Suppose that $f$ is an analytic disc such that the cluster set $C(f,\partial\D)$ is contained in $L$. Then $f$ extends continuously to $\overline\D$.
\end{cor}

The corollary immediately follows from Proposition~\ref{t:cont1} in conjunction with Theorems~\ref{t:ga} and~\ref{InterTheo}. 
The next result is a version of Gromov's removable singularities theorem with nonsmooth boundary data.

\begin{cor}\label{removal}
 Suppose that  a compact subset $E$ of $\cx^n$ is a totally real submanifold in a neighbourhood of every point of $E$  
 except a finite subset $Sing(E) = \{ p_1,...,p_m\}$ (which may be empty).  Assume that $E$ is locally polynomially 
 convex near  every point $p_j \in Sing(E)$. Consider  an analytic disc  $f$  of bounded area,  continuous on 
$\overline{\D} \setminus \{ 1 \}$ and such that $f(\partial\D \setminus \{ 1 \})$ is contained in $E$. 
Then $f$ extends continuously to $\overline\D$.
\end{cor}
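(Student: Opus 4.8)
The plan is to deduce this from Theorem~\ref{t:cont1}, for which the only missing hypothesis is that the full cluster set $C(f,\partial\D)$ be contained in $E$. Since $f$ is continuous on $\overline\D\setminus\{1\}$ with $f(\partial\D\setminus\{1\})\subset E$, one has $C(f,\zeta)=\{f(\zeta)\}\subset E$ for every $\zeta\in\partial\D\setminus\{1\}$, so everything reduces to proving $C(f,1)\subset E$; then Theorem~\ref{t:cont1} applies verbatim, with $Sing(E)$ as given. The hypothesis of bounded area is indispensable at this point: without it $f$ could be, say, $(\exp((z+1)/(z-1)),0,\dots,0)$ with $E$ a circle, for which $C(f,1)$ is a full disc not contained in $E$.

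First I would show $f$ is bounded. Bounded area means each component $f_j$ has finite Dirichlet integral, hence $f_j\in H^2\subset H^1$; therefore $f$ is the Poisson integral of its nontangential boundary values $f^*$, which by continuity of $f$ off $1$ agree with the boundary curve and lie in $E$ almost everywhere, so $\sup_\D\|f\|\le\sup_E\|\cdot\|<\infty$. Consequently $C(f,1)=\bigcap_{r>0}\overline{f(\D\cap\B(1,r))}$ is a nonempty compact connected set, and it meets $E$, since it contains the nonempty cluster set of $t\mapsto f(e^{it})$ as $t\to0^+$.

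Next, a Courant--Lebesgue argument: from $\int_\D\|f'\|^2<\infty$ and Cauchy--Schwarz along the circles $C_\rho=\{|z-1|=\rho\}$ one obtains $\int_0^1\rho^{-1}\ell(\rho)^2\,d\rho<\infty$, where $\ell(\rho)$ is the length of $f(C_\rho\cap\D)$; hence there is a sequence $\rho_k\downarrow0$ with $\ell(\rho_k)\to0$. The endpoints of $C_{\rho_k}\cap\D$ lie on $\partial\D\setminus\{1\}$, so $f$ maps $\partial D_k\setminus\{1\}$ into the $a_k$-neighbourhood $E^{a_k}$ of $E$, where $D_k=\D\cap\B(1,\rho_k)$ and $a_k\to0$, while $\mathrm{area}(f|_{D_k})\to0$. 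The core of the argument is to show that, for $k$ large, $f(D_k)$ is confined to the neighbourhood $\Omega$ of $E$ carrying the nonnegative plurisubharmonic function $\rho$ of Lemma~\ref{l.fs} (with $\{\rho=0\}=E\cap\Omega$ and $\rho$ smooth strictly plurisubharmonic away from small balls about $Sing(E)$). Granting this, I would map $D_k$ conformally onto $\D$ (Carath\'eodory, $D_k$ being a Jordan domain), so that $\rho\circ f$ becomes a bounded nonnegative subharmonic function with $\limsup\le\sup_{E^{a_k}}\rho=:\e_k\to0$ at every boundary point except one; since a single boundary point carries no harmonic measure, $\rho\circ f\le\e_k$, i.e. $f(D_k)\subset\{\rho\le\e_k\}$. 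Hence $C(f,1)=\bigcap_k\overline{f(D_k)}\subset\bigcap_k\{\rho\le\e_k\}=\{\rho=0\}\subset E$, and Theorem~\ref{t:cont1} concludes the proof.

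The main obstacle is the confinement step. The natural tool is the monotonicity formula: if $f(D_k)$ reached a point $q$ at fixed positive distance from $E$, the component of $f(D_k)\cap\B(q,\delta)$ through a nearby image point would have area bounded below, contradicting $\mathrm{area}(f|_{D_k})\to0$ --- provided this component is a \emph{closed} analytic subset of $\B(q,\delta)$, i.e. provided $C(f,1)$ does not itself reach into $\B(q,\delta)$ and provide a free boundary there. Ruling this out is the delicate point: one must exploit that $C(f,1)$ is connected and already meets $E$, pick $q$ on $C(f,1)$ at a controlled intermediate distance from $E$, and argue --- from finiteness of the total area together with the continuity of $f$ off $1$, or via an Alexander/Bishop-type structure statement for the finite-area disc --- that such a ``tendril'' of $f(D_k)$ trailing $C(f,1)$ out of $E$ cannot occur. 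Once the confinement is established, the remaining steps are routine.
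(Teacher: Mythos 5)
Your reduction to Theorem~\ref{t:cont1} is exactly the paper's strategy: the only thing left to prove is that $C(f,1)\subset E$, i.e., $C(f,\partial\D)\subset E$. But your proof of this key fact is incomplete, and you say so yourself: the ``confinement step'' --- that for large $k$ the image $f(D_k)$ stays in a small neighbourhood of $E$ --- is the statement $C(f,1)\subset E$ in disguise, and the monotonicity argument you propose for it is circular. To get a lower area bound for $f(D_k)\cap\B(q,\delta)$ at a point $q\in C(f,1)$ with $\dist(q,E)>0$, you need this piece to be a closed one-dimensional analytic subset of $\B(q,\delta)$, and the only possible obstruction to closedness is precisely the cluster set $C(f,1)$ entering $\B(q,\delta)$ --- which is the situation you are trying to exclude. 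Connectedness of $C(f,1)$ and the fact that it meets $E$ do not by themselves rule out a positive-distance ``tendril''; one still has to show that the part of $\overline{f(\D)}$ lying over such a tendril is analytic of locally bounded-below area (a Bishop/Shiffman-type removable-singularity argument exploiting that the exceptional boundary point $1$ is a single point), and this is a genuine piece of work, not a routine one.

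The paper disposes of this step by quoting Alexander's Theorem~2 from \cite{Al}: for a bounded-area analytic disc continuous on $\overline\D\setminus\{1\}$ with $f(\partial\D\setminus\{1\})$ contained in a compact set $E$, the map $f:\D\setminus f^{-1}(E)\to\cx^n\setminus E$ is proper; properness yields $C(f,\partial\D)\subset E$ in one line, and Theorem~\ref{t:cont1} then finishes the proof. Your closing phrase ``via an Alexander/Bishop-type structure statement for the finite-area disc'' is in effect an appeal to this very result; until it is either cited or proved, your argument has a hole at its central step. (The preliminary parts --- boundedness of $f$ via the Dirichlet-space inclusion into $H^2$ and the Poisson representation, and the Courant--Lebesgue selection of cross-cuts --- are correct, but they become unnecessary once Alexander's theorem is invoked.)
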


\begin{proof}
By \cite[Thm 2]{Al}, $f:\D \setminus f^{-1}(E) \longrightarrow \cx^n \setminus E$ is a proper map. 
Therefore, $C(f,\partial\D)$ is contained in $E$ and we apply Proposition~\ref{t:cont1}. 
\end{proof}

In particular, the conclusion of this corollary holds if $E = L$ is a smooth compact Lagrangian surface in $\cx^2$ with a finite number of transverse double self-intersections and generic open Whitney umbrellas.

\begin{rem} Theorem~\ref{LocalGlobalCor} and Corollary~\ref{removal} still remain true if $E$ is a smooth surface
totally really embedded in $\cx^2$ with a finite number of hyperbolic points. Indeed, Forstneri\v c and Stout \cite{FoSt} 
proved that in this case $E$ is locally polynomially convex near every hyperbolic point.
\end{rem}

\begin{rem} Let $L$ satisfy the assumptions of Proposition~\ref{LocalGlobal}. Then every nonconstant analytic disc $f$  
with boundary attached to $L$ has area bounded away from zero by a constant depending only on $L$. 
Indeed, since $L$ is locally polynomially convex, 
there exists $\e > 0$ such that the boundary of $f$ cannot be contained in the ball $\B(p,\e)$. Then there exists a point 
$p$ in the boundary of $f$ such that $\B(p,\e/2)$ contains only smooth points of $L$. Applying estimates from \cite{Ch}
to the analytic set $f(\D) \cap \B(p,\e)$, we obtain the desired estimate from below. Combining this result with 
Corollary~\ref{removal}, we conclude that Gromov's compactness theorem holds for families of analytic discs with boundary 
glued to $L$. The same holds for totally real surfaces with hyperbolic points and for a sequence of analytic discs with 
uniformly bounded area.
\end{rem}

\section{Hulls of compact real manifolds with singularities}

\subsection{Hulls of Lagrangian surfaces with open umbrella singularities.}
In this subsection we give the proof of Theorem~\ref{t.hulls}(i). Note that here umbrella points are not assumed to be generic. 
We also notice that according to Givental \cite{G} any orientable compact surface of genus $g > 1$ admits a Lagrangian inclusion into $\cx^2$  with $2g-2$ open umbrella singularities and without self-intersections (topologically this inclusion is 
an embedding).

\begin{proof}[Proof of Theorem~\ref{t.hulls}(i)]
It follows from the definition of umbrella points that a neighbourhood of an open umbrella in $L$ is homeomorphic to the unit ball in $\rl^2$; hence $L$ is a topological manifold.  According to a  result of A.~Browder (for the orientable case) and Duchamp-Stout (for the non-orientable case), see \cite[Cor. 2.3.5, p. 95]{St}, no $n$-dimensional compact topological 
manifold in $\cx^n$ is polynomially convex. Applying it to $L$, we obtain $L\ne \widehat L$.

We employ the following  result due to Duval-Sibony \cite[Thm 5.3]{DuSi1}: let $K$ be a compact set in $\cx^n$ with 
$\widehat K \neq K$ and $Y$ be a closed polynomially convex subset of $K$. Suppose that $K \setminus Y$ is locally contained in a totally real manifold. Then there exists a compactly supported, positive (1,1) current $S$ such that 
$\supp (dd^c S) \subset K$ and $\supp (S)$ is not contained in $K$.

Since a finite set is polynomially convex, we can take $L$ as $K$ and the set of its umbrella points as $Y$, 
and use  $\widehat L \neq L$. 
\end{proof}

\begin{rem} Recall that a (1,1) - current on an open subset $\Omega$ is called pluriharmonic (or simply harmonic) in 
$\Omega$ if  $dd^cS = 0$ in $\Omega$. Thus, the current $S$ given by Theorem~\ref{t.hulls}(i) is pluriharmonic in 
$\cx^2 \setminus L$. The theory of pluriharmonic currents is now well developed, see, for instance,  
\cite{DinL,FoSi}.  
\end{rem}

\begin{rem} One can obtain more information about the structure of the current $S$ of Theorem~\ref{t.hulls}(i). 
We recall two general results concerning the structure of polynomially convex hulls, which we state for $L$, though 
they hold for any compact subset of $\cx^n$. Let $p$ be a point in $\widehat L \setminus L$. The first result, which 
is due to Duval-Sibony \cite{DuSi1}, states that there exists a positive $(1,1)$-current $R$ with $p \in \supp(R)$ such that 
\begin{eqnarray}
\label{DS1}
dd^c R = \sigma - \delta_p ,
\end{eqnarray}
 where $\sigma$ is a representative Jensen measure for evaluation at $p$, and 
$\delta_p$ is the Dirac mass at $p$.  A typical example of such a current  arises if there exists a bounded holomorphic disc 
$f: \D \longrightarrow \cx^2$ such that its radial boundary values $\tilde f(\theta):= \lim_{r \to 1} f(re^{i\theta})$ 
belong to $L$ for almost all $e^{i\theta} \in \partial\D$. Suppose for simplicity that $f(0) = p$. It is well-known that the 
image $f(\D)$ is contained in $\widehat L$. However, in general $f(\D)$ is not a complex analytic set and its current of integration is not defined. Consider the Green function $G(\zeta) = (-1/2\pi)\log \vert\zeta\vert$, $\zeta \in \D$ and the 
current $[\D]$. The current $G[\D]$ acts on a test form $\psi \in {\mathcal D}^{1,1}(\cx)$ by  
$$
\langle G[\D],\psi\rangle := \langle [\D],G\psi\rangle = \int_\D G \psi.
$$ 
Pushing it forward by the analytic disc $f$, we  obtain the {\it Green current} $G_f$ of the disc $f$ acting  on $\phi \in {\mathcal D}^{1,1}(\cx^2)$ by 
$$\langle G_f,\phi\rangle :=  \langle f_*G[\D],\phi\rangle = \langle G[\D],f^*\phi\rangle.$$
It is easy to check that this current is defined correctly and satisfies $dd^cS =  f_*\sigma - \delta_p$, and $ f_*\sigma$ is a Jensen measure for $p$, see \cite{DuSi1}.

Fix a Runge domain $\Omega$ containing $L$.  For every $\varepsilon > 0$ there exist a subset 
$\Gamma \subset \partial\D$ of measure less than $\varepsilon$ and a map $f:U \longrightarrow \Omega$ 
holomorphic in a neighbourhood $U$ of $\overline\D$ with $f(\zeta) \in L$ for all $\zeta \in \partial\D \setminus \Gamma$. 
This is  Poletsky's theorem \cite{Po}. Recently Wold \cite{W} proved that Poletsky's theorem implies the existence of 
Duval-Sibony's current $R$ satisfying (\ref{DS1}). He proves that every  such $R$  can be obtained as a limit of the Green currents $G_{f_k}$ corresponding to a sequence of Poletsky's discs. As the proof of \cite{DuSi1} shows, the current $S$ 
of Theorem~\ref{t.hulls}(i) is a limit of a sequence of normalized (i.e., with suitable positive factors) currents 
satisfying (\ref{DS1}); these currents are associated with a suitably chosen sequence of points $p_k$ converging to the 
smooth part of $L$. Combining this with the result of Wold, we conclude that  the current $S$ in Theorem~\ref{t.hulls}(i) 
is a limit of a suitably chosen sequence of (up to positive factors) Green currents $G_{f_k}$  corresponding to
 Poletsky's discs. 
\end{rem}

\begin{rem}
When $L$ has open Whitney umbrella points and transverse self-intersections one can remove self-intersections
by Lagrangian surgery, see \cite{LS}. This procedure consists of gluing an arbitrarily small Lagrangian handle to $L$
near the self-intersection point, turning $L$ into a local embedding. Applying such a surgery to every self-intersection
we obtain a topological Lagrangian embedding satisfying the assumptions of Theorem~\ref{t.hulls}(i). Note that this
operation changes the genus of $L$.
\end{rem}

\subsection{Hulls of totally real immersions} 
In the remaining part of the section we prove Theorem~\ref{t.hulls}(ii) and Theorem~\ref{continuity10}. Theorem~\ref{t.hulls}(ii) is due to Duval-Sibony \cite{DuSi2} 
in the case when $E$ is a totally real embedding. 

Notice that every compact $n$-manifold admits a totally real immersion into $\cx^n$ (see, for instance, \cite{Fo}).
Since immersions are not in general topological submanifolds of $\cx^n$, the algebraic topology methods 
used in the proof of polynomial nonconvexity in Theorem~\ref{t.hulls}(i) do not work directly. Instead, we use 
Alexander's  version~\cite{Al} of Gromov's method \cite{G}, which gives the existence of an analytic disc attached to 
a totally real immersion, which is stronger than the existence of Poletsky's discs. This yields more precise information 
on the structure of the polynomially convex hull of $L$ than gluing to $L$ a  closed positive current.

\begin{proof}[Proof of Theorem~\ref{t.hulls}(ii).] The current $S$ will also be obtained as a limit of (normalized) currents of integration over suitable analytic discs. The absence of umbrellas on $E$ allows us to choose analytic discs  with boundaries better attached  to $E$ than  Poletsky's discs.

A {\it nearly smooth} holomorphic disc of class $C^m$ is an $H^{\infty}$ (i.e., bounded holomorphic) disc  which 
extends $C^m$-smoothly  to $\partial \D\setminus \{1\}$. We say that a nearly smooth holomorphic disc $f$ is attached 
to a compact subset $K\subset  \cx^n$, if $f(\partial\D\setminus\{1\})\subset K$. If $f$ is {\it nonconstant} we  
call it an A-disc of class $C^m$ (after Herbert~Alexander, who proved the existence of such discs for totally real 
manifolds,~\cite{Al}). We simply write A-disc if it is of class $C^\infty$.

In the next subsection we will prove  Proposition~\ref{GluingDisc2} that  gives the existence of an  A-disc $f$ for a totally 
real immersed manifold $E$. Assuming this result we conclude the proof of Theorem~\ref{t.hulls}(ii). It suffices to apply 
to an A-disc $f$ the result of Duval-Sibony \cite[Thm 3.1]{DuSi2}. Though  their  theorem is stated for a totally real 
embedding $E$, the part of their proof which  we need goes  through in our case. Indeed,  consider a exhausting 
sequence $U_k$ of 
subdomains in $\D$ defined as in \cite{DuSi2}. Pushing forward  their currents of integration by the disc $f$, we obtain 
the sequence of currents  $f_*[U_k]$. Set   $a_k = area(f\vert_{U_k})$. Then the argument of \cite{DuSi2} shows 
that the current $S$ satisfying the hypothesis of the proposition is the limit of the sequence 
$S_k = f_*[U_k]/a_k$. This part of the argument of Duval-Sibony is based on a general estimate of the harmonic 
measure in \cite[Lem. 3.2]{DuSi2} and a general isoperimetric inequality for analytic discs \cite[p. 629]{DuSi2}. 
These ingredients do not require any assumptions on $E$. Then the convergence of the sequence $f_*[U_k]/a_k$ to $S$ 
follows by the argument of \cite[p. 629]{DuSi2}, which is independent of the structure of $E$ as well.  
The obtained current is supported on $f(\D)$, so its support is contained in $\widehat E$. It also follows from 
\cite[p. 629-630]{DuSi2} that $\supp(S) \not\subset E$.
\end{proof}

\subsection{Existence of A-discs and proof of Theorem~\ref{continuity10}.}\label{s.Adisc}

Let $E = (\tilde E,{\iota})$ be a pair which consists of a compact real manifold $\tilde E$
of dimension $n\ge 1$ and a $C^\infty$-smooth totally real immersion ${\iota}: \tilde E\to \cx^n$. We simply 
say that $E$ is an immersed totally real manifold in $\cx^n$ identifying it with the image ${\iota}(\tilde E)$. 
We say that an A-disc $f$ is {\it adapted} for the immersion $E$ if for every point $\zeta \in \partial \D \setminus \{ 1 \}$ there exist an open arc $\gamma\subset \partial \D$ containing $\zeta$ and a smooth map 
$ f_b:\gamma \longrightarrow \tilde E$ satisfying ${\iota} \circ f_b = f\vert_\gamma$. 
In other words, in a neighbourhood of every self-intersection point $p$ of $E$ the values of $f$ belong to a 
smooth component of $E$ through $p$.

\begin{prop}\label{GluingDisc2}
Let $E = (\tilde E,{\iota})$ be an immersed totally real manifold in $\cx^n$. Then 
\begin{enumerate}
\item[(i)] $E$ admits an adapted  A-disc $f \in C(\overline\D \setminus \{ 1 \})$.
\item[(ii)] If in addition $E$ is Lagrangian, then $f$ is of  bounded area with the cluster set $C(f,\partial\D)$ 
contained in $E$. Its image $X= f(\D)$ is a holomorphic curve of bounded area with the boundary 
$\partial X := \overline X \setminus X$ contained  in $E$.

\end{enumerate}
\end{prop}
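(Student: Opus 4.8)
The plan is to follow Alexander's construction of analytic discs attached to totally real manifolds, but carried out for the immersion $\iota:\tilde E\to\cx^n$ rather than directly for the image. First I would fix a tubular neighbourhood of $\tilde E$ and, using the total reality of $\iota$, set up the local model: near each point of $\tilde E$ one can choose coordinates in which $\iota(\tilde E)$ looks like $\rl^n\subset\cx^n$ up to a small perturbation, and the associated Bishop-type equation for discs with boundary on $\iota(\tilde E)$ is uniquely solvable for small boundary data. The key point of working upstairs on $\tilde E$ is that the boundary map of the disc will factor through a smooth map $f_b:\partial\D\setminus\{1\}\to\tilde E$, which is exactly the ``adapted'' condition: a disc whose boundary, read on $\tilde E$, stays on a single smooth sheet near any self-intersection of the image.

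The existence of a \emph{nonconstant} such disc is obtained by Alexander's argument \cite{Al}: one produces $H^\infty$ discs attached to $\tilde E$ with a controlled singularity at $\zeta=1$ (e.g.\ by prescribing a pole-like behaviour or using the standard conformal factor that forces noncompactness), and shows these discs cannot all be constant. Concretely, I would start from an arbitrary smooth embedded arc in $\tilde E$ and a nearly smooth disc attached to it with a prescribed jump at $1$, then run the implicit function theorem / fixed point scheme in a space of $C^m$ maps on $\partial\D\setminus\{1\}$ with appropriate weights at $1$; the nonconstancy comes from the prescribed boundary behaviour not being realizable by a constant. Since $\iota$ is a local diffeomorphism onto its image, composing with $\iota$ turns a disc attached to $\tilde E$ into one attached to $E$, smooth on $\partial\D\setminus\{1\}$, and the factorization $\iota\circ f_b=f|_\gamma$ holds by construction. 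This proves part (i).

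For part (ii), assume $E$ is Lagrangian. Here I would invoke the area/energy bound that is the heart of Gromov's and Alexander's theory: for a disc with boundary on a Lagrangian (immersed) manifold, the symplectic area $\int_\D f^*\omega$ can be computed as a boundary integral of a primitive $\lambda$ of $\omega$, and since $\lambda|_{\tilde E}$ is closed (Lagrangian condition pulled back by $\iota$), this integral is controlled by the length of the boundary arc in $\tilde E$, hence uniformly bounded. Combined with the fact that $f$ is holomorphic, so that $\int_\D f^*\omega$ equals the Euclidean area of $X=f(\D)$, we get that $X$ has finite (bounded) area. Then the cluster set statement follows from Alexander's theorem \cite[Thm 2]{Al}, exactly as in Corollary~\ref{removal}: a finite-area analytic disc with boundary values in the totally real set $E$ off a single point has $C(f,\partial\D)\subset E$, and $f:\D\setminus f^{-1}(E)\to\cx^n\setminus E$ is proper, so $X$ is a holomorphic curve with $\partial X=\overline X\setminus X\subset E$.

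The main obstacle is the first, existence, half of part (i): Alexander's construction is usually phrased for embedded totally real manifolds, and one has to check that nothing breaks when $E$ is merely immersed --- in particular that the disc can be kept attached to a \emph{single} smooth sheet near each self-intersection, so that the adapted factorization through $\tilde E$ genuinely exists rather than the boundary curve jumping between sheets. Handling this requires doing the whole fixed-point argument in the source manifold $\tilde E$ (where there are no self-intersections) and only afterwards pushing forward by $\iota$, together with a verification that the a priori estimates and the singularity analysis at $\zeta=1$ are unaffected by $\iota$ being only a local, not a global, diffeomorphism. This is precisely the point where the contribution of Ivashkovich acknowledged in the introduction enters, and I would present the details of this adaptation carefully, citing \cite{Al} for the parts that transfer verbatim.
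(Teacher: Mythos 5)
Your device for handling the immersion --- replacing a disc $f$ by a pair $(f,f_b)$ whose boundary is lifted to the source manifold $\tilde E$, and noting that the elliptic estimates are purely local so that the embedded-case analysis transfers to the immersed case --- is exactly what the paper does (this is the Ivashkovich--Shevchishin idea), and your use of \cite[Thm 2]{Al} for the cluster-set and properness statements in (ii) also matches. The genuine gap is in the existence mechanism for part (i). You propose to obtain the nonconstant disc from a local model (coordinates in which $\iota(\tilde E)$ is a small perturbation of $\rl^n$), a Bishop-type equation, and an implicit-function/fixed-point scheme starting from ``a nearly smooth disc with a prescribed jump at $1$'', with nonconstancy coming from the prescribed boundary behaviour. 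This cannot work: a totally real manifold of maximal dimension $n$ in $\cx^n$ is locally polynomially convex, so no local construction produces nonconstant attached discs, and there is no initial disc with a prescribed jump to perturb --- showing that such boundary behaviour is realizable by \emph{some} disc is the whole problem, so the argument as stated is circular. The existence of an A-disc is an irreducibly global phenomenon.

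The actual mechanism, which your proposal omits, is Alexander's Fredholm-theoretic scheme. One forms the Banach manifold $H=\{(f,g)\in F\times G:\ \partial f/\partial\overline\zeta=g\}$ over the space $F$ of discs with lifted boundary, shows that the projection $\pi:H\to G$, $\pi(f,g)=g$, is Fredholm of index $0$ with the constant disc $f^0$ a regular point, and uses the key fact that $\pi$ is \emph{not surjective}. Assuming no adapted A-disc exists, $\pi^{-1}(0)=\{f^0\}$ and $0$ is a regular value; if $\pi$ were proper, the Sard--Smale degree argument would force surjectivity, a contradiction, so $\pi$ is not proper, and the renormalization of a divergent sequence $(f_k)$ by M\"obius maps concentrating at a boundary point produces in the limit the nonconstant disc. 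Both the distinguished singular point $1\in\partial\D$ and the nonconstancy (via $\vert\tilde h_k'(\lambda_k)\vert\to 1/2$) come out of this bubbling analysis, not from prescribed boundary data. A secondary, smaller issue: in (ii) your Stokes argument bounds the area by the length of the boundary curve, but the A-disc is only smooth on $\partial\D\setminus\{1\}$ and its boundary need not have finite length a priori; the paper instead obtains the area bound as in \cite[p.~140--141]{Al}, from the approximating discs in the renormalization.
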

 
It follows by the maximum principle that the disc $f$ is contained in the polynomially convex hull of $E$. Since the area of $X$ is finite, the current of integration $[X]$ over 
$X$ is correctly defined. While in Proposition~\ref{GluingDisc2} we do not impose any restrictions of transversality type 
on $E$, under additional assumptions we can deduce the boundary continuity of a disc. 

\begin{proof}[Proof of Theorem~\ref{continuity10}.] Theorem follows from Proposition~\ref{GluingDisc2} and Proposition~\ref{t:cont1}.
\end{proof}

It remains to establish Proposition~\ref{GluingDisc2}.

\begin{proof}[Proof of Proposition~\ref{GluingDisc2}.]
The proof follows Alexander's argument with some necessary modifications. We first deal with part (i).

\medskip

 {\it Step 1. Manifolds of discs and elliptic estimates.} Fix a point $p = {\iota}(\tilde p) \in E$ which is not a self-intersection point  and  fix also a  non-integer $r > 1$.
  Consider the set of pairs 
\begin{equation}\label{mnfd-U}
{\mathcal F} = \left\{(f,f_b)\in C^{r+1}(\D, \cx^n)\times  C^{r+1}(\partial \D, \tilde E): 
f(\partial \D) \subset E,\ f(1) = p,\ {\iota} \circ f_b = f|_{\partial \D} \right\}.
\end{equation}
In other words, together with a (not necessarily analytic) disc $f$ we specify a lift of its boundary  to the source manifold $\tilde E$. This idea is due to Ivashkovich-Shevchishin \cite{IvSh}. In what follows we will write shortly $f$ instead of $(f,f_b)$ when it does not lead to a confusion.

Denote by  $F$  an open subset of ${\mathcal F}$ which consists of $f$ homotopic to a constant map
 $f^0 \equiv p$ in ${\mathcal F}$. It is well-known that $F$ is a $C^\infty$-smooth complex Banach manifold.
Denote by $G$ the complex Banach space of all $C^r$ maps $g:\D \to \cx^n$. Set
$H = \{ (f,g) \in F \times G: \partial f /\partial \overline\zeta = g \}$.
Then $H$ is a connected submanifold of $F \times G$.  

For $0<t<1$, let $\D_t := t \D$, and $\D^+_t := t\D \cap \{\Im \zeta >0\}$.

\begin{lemma}\label{AlEst2}
Let $E'$ be an embedded totally real manifold. Let  $f_k:(\D_t^+,\partial \D_t^+ \cap \rl) \to (\cx^n,E')$ 
be maps of class $C^{r+1}$ which converge uniformly to $f:(\D_t^+,\partial \D_t^+ \cap \rl) \to (\cx^n,E')$. Suppose 
that the sequence $g_k = \partial f_k/\partial\overline\zeta$ converges in $C^r(\D^+_t)$ to $g \in C^r(\D^+_t)$. Then 
for every $\tau < t$ one has $f \in C^{r+1}(\D^+_t)$ and $\{ f_k \}$ converges to $f$ in $\D^+_\tau$ in the  $C^{r+1}$ norm.
\end{lemma}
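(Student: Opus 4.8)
The plan is to recognize this as a standard elliptic bootstrapping / compactness statement for the inhomogeneous Cauchy--Riemann operator with a totally real boundary condition, and to reduce it to interior and boundary Schauder (or $L^p$) estimates for $\dbar$. Write $u_k = f_k - f$, so that $\Db{u_k} = g_k - g \to 0$ in $C^r(\D_t^+)$, and $u_k \to 0$ uniformly; the boundary values $u_k|_{\partial\D_t^+\cap\rl}$ lie in the difference $E' - f(\partial\D_t^+\cap\rl)$, but this is not linear, so the first genuine step is to flatten the boundary condition. Since $E'$ is an embedded totally real submanifold, near the relevant boundary arc one can choose local holomorphic coordinates in which $E'$ becomes (a graph over) $\rl^n$; composing $f_k$ and $f$ with this biholomorphism replaces the condition ``boundary in $E'$'' by ``boundary in $\rl^n$'', i.e. $\Im(f_k) = 0$ on $\partial\D_t^+\cap\rl$, at the cost of changing $g_k$ by a term that still converges in $C^r$ (the biholomorphism is smooth and $f_k$ converges uniformly, with $\dbar$-derivatives converging, so the Chain Rule keeps everything under control). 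After this reduction the boundary condition is linear, and one may work componentwise.

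Next I would invoke the standard boundary regularity estimate for $\dbar$ on a half-disc with the reality condition $\Im\phi = 0$ on the straight part of the boundary. Concretely: if $\phi \in C^{r}$ on $\overline{\D_t^+}$ solves $\Db{\phi} = \eta$ with $\eta \in C^r(\D_t^+)$ and $\Im\phi$ vanishes on $\partial\D_t^+\cap\rl$, then by reflecting $\phi$ across the real axis (extending $\eta$ by the conjugate-reflection) one obtains a solution of an inhomogeneous $\dbar$-equation on the full disc $\D_t$ with $C^r$ right-hand side, to which the interior Cauchy--Green / Cauchy integral representation applies; this yields, for any $\tau < t$,
$$
\|\phi\|_{C^{r+1}(\D_\tau^+)} \le C\bigl(\|\eta\|_{C^r(\D_t^+)} + \|\phi\|_{C^0(\D_t^+)}\bigr),
$$
with $C = C(r,t,\tau)$. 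This is the key analytic input; it is entirely classical (see the references to elliptic estimates already cited in the paper). Applying it to $\phi = u_k$ (componentwise, after the flattening) with $\eta = $ the modified $g_k - g$, the right-hand side tends to $0$, so $u_k \to 0$ in $C^{r+1}(\D_\tau^+)$. In particular the $C^{r+1}(\D_\tau^+)$ norms of $f_k$ are uniformly bounded, so by Arzel\`a--Ascoli (and the fact that $C^{r+1}$ is non-integer, giving compact embedding into lower-order H\"older spaces on the compact set $\overline{\D_\tau^+}$) the limit $f$ lies in $C^{r+1}(\D_\tau^+)$, and since $\tau < t$ was arbitrary, $f \in C^{r+1}(\D_t^+)$; the convergence $f_k \to f$ in $C^{r+1}(\D_\tau^+)$ follows from the same estimate applied to $u_k$.

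The main obstacle, and the step that needs the most care, is the passage from the nonlinear boundary condition ``$f_k(\partial\D_t^+\cap\rl)\subset E'$'' to a linear one for the differences $u_k$: one must check that the local straightening biholomorphism of $E'$ can be chosen uniformly along the boundary arc, that its composition with the uniformly convergent sequence $f_k$ preserves $C^r$ convergence of the $\dbar$-data (this uses that $f_k$ and $g_k$ converge together, so that all first derivatives behave well enough for the Chain Rule at the $C^r$ level), and that the reflection argument is compatible with the flattened condition. Everything else — the interior $\dbar$-estimate, the reflection, Arzel\`a--Ascoli — is routine. A minor additional point is that one should localize near each boundary point of the arc $\partial\D_t^+\cap\rl$ using a partition of unity and patch the resulting estimates, since the straightening of $E'$ is only local; but since $E'$ is embedded this causes no trouble.
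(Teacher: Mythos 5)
Your overall skeleton (linearize the boundary condition, prove a Schauder-type estimate for $\dbar$ with a reality condition via reflection, conclude by Arzel\`a--Ascoli) has the right shape, and the linear estimate you state for $\Im\phi=0$ on the real arc is essentially correct. But the reduction to that linear problem --- the step you yourself flag as the one needing care --- contains a genuine gap. A smooth embedded totally real $n$-manifold $E'\subset\cx^n$ cannot in general be mapped onto $\rl^n$ by a local \emph{biholomorphism}; this is possible only when $E'$ is real analytic. For merely smooth $E'$ any flattening diffeomorphism $\Phi$ is necessarily non-holomorphic, and then $\dbar(\Phi\circ f_k)=\partial_w\Phi(f_k)\,\dbar f_k+\partial_{\bar w}\Phi(f_k)\,\overline{\partial f_k/\partial\zeta}$: the transformed right-hand side involves the full first derivatives $\partial f_k/\partial\zeta$, which are exactly what the lemma is supposed to control. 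Your assertion that ``the Chain Rule keeps everything under control'' because $f_k$ converges uniformly and $g_k$ converges in $C^r$ is therefore circular; even in the (unavailable) biholomorphic case, the factor $\Phi'(f_k)$ converges in $C^r$ only if one already has $C^r$ bounds on $f_k$. The same circularity infects the claim that the differences $u_k=f_k-f$ satisfy a linear reality condition: since the boundary condition $f_k(\partial\D_t^+\cap\rl)\subset E'$ is nonlinear, $u_k$ satisfies no such condition even after writing $E'$ as a graph over $\rl^n$.

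The argument the paper relies on (it does not reprove the lemma but invokes Alexander's proof, described as ``standard elliptic bootstrapping'' with the Cauchy--Green operator and harmonic-measure estimates) closes this circle by a genuine induction on regularity: one first obtains a uniform low-order bound (H\"older $1/2$, using harmonic measure and the total reality of $E'$, e.g.\ via local defining functions $r_1,\dots,r_n$ with $\partial r_1\wedge\dots\wedge\partial r_n\neq0$, or a flattening $\Phi$ with $\dbar\Phi$ vanishing to high order on $E'$ so that the bad term is quadratically small), and then iterates the regularity property $T_\D:C^s(\D)\to C^{s+1}(\D)$, gaining one derivative and shrinking the half-disc at each step until $C^{r+1}$ is reached. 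Your proposal skips this bootstrap entirely, which is why it appears to prove the estimate in one stroke. A secondary, fixable point: the conjugate reflection of the $\dbar$-datum across $\rl$ is generally not $C^r$ on the full disc, so ``the interior Cauchy--Green representation applies'' needs the standard refinement of splitting the datum into its upper and lower half-disc pieces (or, better, subtracting $T_\D$ of a Whitney extension of $\eta$ and invoking Privalov-type regularity of the holomorphic remainder from its $C^{r+1}$ imaginary boundary values). Neither repair is cosmetic; without the bootstrap the proof does not go through.
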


Denote by
$$
T_\D f(\zeta) = \frac{1}{2\pi i}\int_\D \frac{f(\tau)}{\tau - \zeta}d\tau \wedge d\overline\tau
$$
the Cauchy-Green integral on $\D$. Recall the classical  {\it regularity property} of the Cauchy-Green integral: for every 
noninteger $s > 0$ the linear map $T_\D:C^s(\D) \longrightarrow C^{s+1}(\D)$ is bounded. The proof of 
Lemma~\ref{AlEst2} given in \cite{Al} is based on the standard elliptic "bootstrapping" argument employing this 
regularity property of the Cauchy-Green operator and elementary  estimates  of the harmonic measure. An immediate 
but crucial for our goals observation is   that this proof is {\it purely local}, i.e., all estimates and the convergence are established in a neighbourhood of a given boundary point of a disc. The global statement is the following 

\begin{lemma}\label{AlEst1}
Let $E$ be an immersed totally real manifold. Suppose that for a sequence $\{ (f_k, (f_k)_b)\}$ in ${\mathcal F}$ 
the sequence $\{f_k\}$ converges to a continuous mapping $f:(\D,\partial\D) \to (\cx^n,E)$ uniformly on 
$\overline \D$, and $g_k:= \partial f_k/\partial \overline\zeta$ converges in $C^r(\D)$ to $g \in C^r(\D)$. 
Then $f \in  C^{r+1}(\D)$, $\{(f_k)_b\}$ converges to some $f_b$, and $\{(f_k, (f_k)_b)\}$ converges to $(f, f_b)$ 
in ${\mathcal F}$ after possibly passing to a subsequence.
\end{lemma}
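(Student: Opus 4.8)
The plan is to deduce Lemma~\ref{AlEst1} from its local counterpart Lemma~\ref{AlEst2} by a covering/compactness argument on the boundary circle $\partial\D$, treating the singular point $1$ separately. First I would note that away from the finitely many preimages under the limit map $f$ of the self-intersection locus of $E$, the immersion $\iota$ is a local embedding: for every $\zeta_0 \in \partial\D$ with $f(\zeta_0)$ a smooth (non-self-intersection) point of $E$, there is a neighbourhood $V$ of $f(\zeta_0)$ in $\cx^n$ such that $E \cap V$ is an embedded totally real submanifold $E'$, and since $\{f_k\}$ converges uniformly to $f$, for $k$ large the disc $f_k$ maps a fixed small boundary arc $\gamma_{\zeta_0}$ around $\zeta_0$ into $V$. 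On that arc the lifts $(f_k)_b$ take values in the single sheet of $\tilde E$ over $E'$ (here one uses that $(f_k)_b \in C^{r+1}$ and the arc is connected, so the lift, once it starts in one sheet near $f(\zeta_0)$, stays there), so $(f_k)_b = \iota|_{E'}^{-1}\circ f_k$ on $\gamma_{\zeta_0}$, and Lemma~\ref{AlEst2} applies after a conformal chart sending a half-disc neighbourhood of $\zeta_0$ to $\D_t^+$ with $\gamma_{\zeta_0}$ going to $\partial\D_t^+\cap\rl$. This yields $f\in C^{r+1}$ near $\zeta_0$ and $C^{r+1}$-convergence of $f_k$ to $f$ on a slightly smaller arc, together with convergence of $(f_k)_b$ to $\iota|_{E'}^{-1}\circ f|_{\gamma}=:f_b$ there.

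Next I would handle the self-intersection points. The set $\Sigma_E$ of self-intersection points of $E$ is finite (or at least discrete and compact, hence finite), so its preimage $Z=\{\zeta\in\partial\D: f(\zeta)\in\Sigma_E\}$ is a closed subset of $\partial\D$; a priori it could be large, but in fact near any $\zeta_0\in Z$ one can still run the same argument, because $\iota$ is an \emph{immersion} and hence a local embedding near each point $\tilde p$ of $\tilde E$, not near each point of the image. The point is that the lifts $(f_k)_b$ live on $\tilde E$, where $\iota$ \emph{is} locally injective. So for $\zeta_0\in Z$ pick $\tilde p=f_b(\zeta_0)$ (defined as a limit of a subsequence of $(f_k)_b(\zeta_0)$, which exists by compactness of $\tilde E$; passing to this subsequence is where the ``after possibly passing to a subsequence'' in the statement is used), choose a neighbourhood $\tilde V$ of $\tilde p$ in $\tilde E$ on which $\iota$ is an embedding with image $E'$, and a boundary arc $\gamma_{\zeta_0}$ so small that $(f_k)_b(\gamma_{\zeta_0})\subset\tilde V$ for large $k$ — this last inclusion follows from uniform convergence of $(f_k)_b$ on $\gamma_{\zeta_0}$, which in turn we must extract, again using Arzel\`a--Ascoli together with the $C^{r+1}$ bounds on $(f_k)_b$ coming from $C^{r+1}$ bounds on $f_k$ via $\iota|_{E'}^{-1}$. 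Then once more Lemma~\ref{AlEst2} gives the local $C^{r+1}$-regularity and convergence near $\zeta_0$, and identifies the limiting lift $f_b$ near $\zeta_0$.

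Having covered every $\zeta_0\in\partial\D$ by an arc $\gamma_{\zeta_0}$ on which we have $C^{r+1}$-convergence of $f_k\to f$ and uniform convergence of $(f_k)_b\to f_b$, I would extract a finite subcover $\gamma_{\zeta_1},\dots,\gamma_{\zeta_M}$ of $\partial\D$ by compactness. A single subsequence works for all of them by the usual diagonal extraction over the finitely many arcs. The local lifts $f_b$ agree on overlaps (they are all equal to $\iota$-inverse of $f$ into the relevant embedded sheet, and on a connected overlap arc two such local lifts that agree at one point agree everywhere — one should remark that overlaps of the $\gamma_{\zeta_i}$ can be taken connected, or subdivide), so they glue to a global $f_b\in C^{r+1}(\partial\D,\tilde E)$ with $\iota\circ f_b=f|_{\partial\D}$; interior $C^{r+1}$-regularity of $f$ on $\D$ is immediate from $g\in C^r(\D)$ and the regularity of the Cauchy--Green operator, since $f-T_\D g$ is holomorphic. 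The $C^{r+1}$-convergence $f_k\to f$ on compact subsets of $\D$ likewise follows from $g_k\to g$ in $C^r(\D)$, $f_k(1)=p$ fixed, and interior elliptic estimates. Finally, one checks $f$ is homotopic to the constant $p$ in $\mathcal F$ (as a uniform-and-then-$C^{r+1}$ limit of such maps, using that $F$ is open), so $(f,f_b)\in\mathcal F$, completing the proof.

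I expect the main obstacle to be the bookkeeping around the lifts $(f_k)_b$ at the self-intersection points: one must show that a boundary arc does not ``jump'' between sheets of $\tilde E$ and that the extracted subsequence can be chosen uniformly over the finite cover, and one must be careful that the convergence and gluing of the lifts is consistent. The analytic heart — the elliptic bootstrapping — is entirely contained in Lemma~\ref{AlEst2}, which is quoted; the work here is purely the patching of a local statement into a global one on the circle, with the self-intersection points being the only subtlety beyond a routine partition-of-unity-style argument.
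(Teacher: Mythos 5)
Your proposal follows the same route as the paper: reduce to the embedded Lemma~\ref{AlEst2} by using the lifts $(f_k)_b$ and uniform convergence to confine each small boundary arc to a single sheet of the immersion (passing to a subsequence to fix the sheet), take a finite cover of $\partial\D$ to get $C^{r+1}$ convergence near the boundary, and obtain interior convergence from the decomposition $f_k = T_\D g_k + h_k$ and normality of the bounded holomorphic family $(h_k)$. The paper's own argument is exactly this, stated more tersely. One sub-step of yours is circular as written: at a point $\zeta_0$ with $f(\zeta_0)$ a self-intersection point, you derive the inclusion $(f_k)_b(\gamma_{\zeta_0})\subset\tilde V$ from uniform convergence of the lifts, which you propose to extract via Arzel\`a--Ascoli using ``$C^{r+1}$ bounds on $(f_k)_b$ coming from $C^{r+1}$ bounds on $f_k$ via $\iota|_{E'}^{-1}$'' --- but uniform $C^{r+1}$ bounds on $f_k$ near the boundary are not among the hypotheses (they are the \emph{output} of Lemma~\ref{AlEst2}), and composing with $\iota|_{E'}^{-1}$ presupposes the very sheet identification you are trying to establish. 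The correct and simpler argument is the one you already use at smooth points: $\iota^{-1}(q)$ is finite, so for small $\e$ the set $\iota^{-1}(\B(q,\e))$ is a disjoint union of finitely many open sets on each of which $\iota$ embeds; uniform convergence of $f_k$ puts $f_k(\gamma_{\zeta_0})$ into $\B(q,\e)$ for large $k$, and connectedness of the arc plus convergence of $(f_k)_b(\zeta_0)$ along a subsequence pins the whole lifted arc to one fixed sheet. With that repair your proof is complete and coincides with the paper's.
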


We stress that the local character of Lemma~\ref{AlEst2} allows us to pass automatically from an  embedded $E'$ to 
a globally immersed $E$ in Lemma~\ref{AlEst1}. Indeed, suppose that $q$ is a self-intersection point of $E$ and 
$f(\zeta_0) = q$ for some $\zeta_0\in\partial\D$. It follows from the uniform convergence of the sequence $(f_k)$ 
and the definition of the set ${\mathcal F}$ that there exists a neighbourhood $U$ of $\zeta_0$ such that 
$f(U \cap \partial\D)$ and after passing to a subsequence, $f_k(U \cap \partial\D)$ belong to the same smooth 
component through $p$ of the immersed manifold $E$. This reduces the situation to the embedded case of 
Lemma~\ref{AlEst2}.

Considering the finite covering of $\partial\D$ by such neighbourhoods we obtain $C^{r+1}$ convergence in a
neighbourhood of $\partial\D$. The convergence in the interior of $\D$ follows, since $f_k = T_\D g_k + h_k$ and the 
bounded sequence $(h_k)$ of holomorphic functions form a normal family. 

\medskip

{\it Step 2: Renormalization.} The canonical projection $\pi: H \to G$ given by $\pi(f,g) = g$ is a map of class $C^1$ 
between two Banach manifolds. It is known \cite{Al, Gr, IvSh} that $\pi$ is  a Fredholm map of index $0$ and the 
constant map $f^0$  is a regular point for $\pi$; the proof in the immersed case is the same as in the embedded case,
see \cite {IvSh}.

The crucial property of $\pi$ is described in the following lemma proved in \cite{Al}: {\it the map $\pi$ is not surjective.} 
Now assume by contradiction that an adapted  A-disc of class $C^{r+1}(\D)$ for $E$ does not exist.  In particular, 
$\pi^{-1}(0) = \{ f^0\}$. Then $0 \in G$ is a regular value of $\pi$. If $\pi$ is proper, then Gromov's argument based on 
Sard-Smale's theorem implies surjectivity of $\pi$ (see \cite{Al}) -- a contradiction. Thus, it remains to show that 
$\pi:H \to G$ is proper.

\smallskip

 Arguing by contradiction, suppose that $\pi$ is not proper. Then there exists a sequence $\{(f_k,g_k)\}\subset H$ 
 such that $g_k\to g$ in $G$ but $f_k$ diverge in $F$. For every $k$ consider the function $q_k$ defined by   
 $q_k(\zeta)= T_\D g_k(\zeta)$  for $\zeta \in \overline\D$ and $q_k(\zeta) = 0$  on $\zeta \in \cx\setminus \overline\D$.
 Then $q_k \to q = T_\D g$ in $C^{r+1}(\overline\D , \cx^n)$ and $f_k = q_k + h_k$, where 
$h_k\in C^{r+1}(\D , \cx^n)$ and $h_k$ is holomorphic on $\D$.  We have  $f_k(\partial \D)\subset E$ 
and $q_k$ are uniformly bounded since $g_k$ are; we conclude that $h_k|_{\partial \D}$ are uniformly  bounded. 
By the maximum principle the functions $h_k$ are uniformly bounded on $\overline\D$. Hence, $f_k$ are uniformly 
bounded.

Set $M_k = \sup_{\D}|h'_k(\lambda)|$. Since $h_k\in C^r(\D , \cx^n)$ and $r > 1$, 
the constants $M_k$ are finite for every~$k$. If $(M_k)$ contains a bounded subsequence, then a subsequence of 
$(h_k)$ converges uniformly on $\D$. Then a subsequence of $(f_k)$ converges uniformly, and by Lemma~\ref{AlEst1}
it converges in $F$ -- a contradiction. Thus, we may  suppose $M_k \to \infty$.  The key idea of 
\cite{Al} is to apply a renormalization argument.

There exists $\lambda_k\in \partial \D$ with $M_k= |h'(\lambda_k)|$ and, taking a subsequence if necessary, suppose that $\lambda_k\to \lambda^*$.
Set $z_k = (1-\frac{1}{M_k})\lambda_k\in \D$ and consider the renormalization sequence of Mobius maps  
$\phi_k(\lambda) = (\lambda + z_k)(1+\bar z_k\lambda)^{-1}$. Set $\tilde f_k = f_k\circ \phi_k$, 
$\tilde q_k = q_k\circ \phi_k$ and $\tilde h_k = h_k\circ \phi_k$. It is proved in \cite{Al} that after extracting 
a subsequence, the sequences $(\tilde q_k)$ and $(\tilde h_k)$ converge uniformly on compacts in 
$\overline\D\setminus \{-\lambda^*\}$ respectively to a constant map $c$  and a holomorphic map $\tilde h$.

 Notice that since $q_k$ converge in $C^{r+1} (\overline \D) $, the sequence $\tilde q_k$ converges  on compacts in 
$\overline\D\setminus \{-\lambda^*\}$ in this norm. Since Lemma~\ref{AlEst1} is local, it applies and gives the 
convergence of $(\tilde f_k)$ to $\tilde f$ also in the $C^{r+1}$-norm on compacts in 
$\overline\D\setminus \{-\lambda^*\}$. This is the key observation which makes Alexander's construction valid 
in the immersed case. 
Then again the argument of \cite{Al} shows that $\vert\tilde h_k'(\lambda_k)\vert$ converges to  
$1/2 = \vert \tilde h'(\lambda^*) \vert$. Hence, $\tilde f$ is nonconstant, and so is an adapted A-disc of class 
$C^{r+1}$. Since locally $E$ is an embedding and the disc is adapted, it is $C^\infty$ smooth on 
$\overline\D \setminus \{ 1 \}$ by the boundary regularity theorem for analytic discs.  This contradicts our 
assumption of nonexistence of adapted A-discs, which proves Proposition~\ref{GluingDisc2}(i).

\medskip

As for (ii), repeating verbatim the argument of \cite[p. 140-141]{Al} we obtain that the constructed in (i)  A-disc $f$ has a 
bounded area. Then \cite[Thm 2]{Al} implies that $f: \D\setminus f^{-1}(E) \to \cx^n \setminus E$ is a proper 
map. This completes the proof. 
\end{proof}


\end{document}